\theoremstyle{plain}
\newtheorem{theorem}{Theorem}[section]
\newtheorem{proposition}[theorem]{Proposition}
\newtheorem{lemma}[theorem]{Lemma}
\newtheorem{corollary}[theorem]{Corollary}
\newtheorem{remark}[theorem]{Remark}
\newtheorem{definition}[theorem]{Definition}
\newtheorem*{theorem*}{Theorem}
\let\n\noindent
\newcommand{\cc}{  \bb{C}_{(0,\infty)}  }
\newcommand{\re}{ \textup{Re}  }
\newcommand{\ov}{   \overline{\Pi}   }
\newcommand{\pp}{\bb{P}}
\newcommand{\conv}{\raisebox{-0.6ex}{\scalebox{2.5}{$\ast$}}}
\newcommand{\lb}{\left (}
\newcommand{\rb}{\right )}
\newcommand{\lbb}{\left [}
\newcommand{\rbb}{\right ]}
\newcommand{\labs}{\left |}
\newcommand{\rabs}{\right |}
\newcommand{\lbrb}[1]{\lb #1 \rb}
\newcommand{\lbbrbb}[1]{\lbb#1\rbb}
\newcommand{\lbbrb}[1]{\lbb#1\rb}
\newcommand{\lbrbb}[1]{\lb#1\rbb}
\newcommand{\lbcurly}{\left\{}
\newcommand{\rbcurly}{\right\}}
\newcommand{\lbcurlyrbcurly}[1]{\lbcurly#1\rbcurly}
\newcommand{\intervalII}{\lbrb{-\infty,\infty}}
\newcommand{\intervalOI}{\lbrb{0,\infty}}
\newcommand{\abs}[1]{\labs#1\rabs}
\newcommand{\curly}[1]{\lbcurly#1\rbcurly}
\newcommand{\bo}[1]{\mathrm{O}\lbrb{#1}}
\newcommand{\so}[1]{\mathrm{o}\lbrb{#1}}
\newcommand{\tightoverset}[2]{%
	\mathop{#2}\limits^{\vbox to 0.28ex{\kern -0.2ex\hbox{$#1$}\vss}}}
\newcommand{\Pbb}[1]{\Pb\lb #1\rb}
\newcommand{\Ebb}[1]{\Eb\lbb #1\rbb}
\newcommand{\LL}{L\'{e}vy }
\newcommand{\LLP}{L\'{e}vy process }
\newcommand{\LLPs}{L\'{e}vy processes }
\newcommand{\LLK}{\LL\!\!-Khintchine }
\newcommand{\mubar}[1]{\bar{\mu}\lbrb{#1}}
\newcommand{\muzeta}{\mu_{\zeta}}
\newcommand{\muReta}{\mu_{\Re\lbrb{\zeta}}}
\newcommand{\bmuzeta}{\bar{\mu}_{\zeta}}
\newcommand{\bmuReta}{\bar{\mu}_{\Re(\zeta)}}
\newcommand{\Reta}{\Re\lbrb{\zeta}}
\newcommand{\Rez}{\Re\lbrb{z}}
\newcommand{\limi}[1]{\lim\limits_{#1\to \infty}}
\newcommand{\Cb}{\mathbb{C}}
\newcommand{\Eb}{\mathbb{E}}
\newcommand{\N}{\mathbb{N}}
\newcommand{\Rb}{\mathbb{R}}
\newcommand{\R}{\mathbb{R}}
\newcommand{\Pb}{\mathbb{P}}
\newcommand{\Nbp}{\mathbb{N}^{+}}
\newcommand{\Ac}{\mathcal{A}}
\newcommand{\Bc}{\mathcal{B}}
\newcommand{\Mcc}{\mathcal{M}}
\newcommand{\Prm}{\mathrm{P}}
\newcommand{\ind}[1]{\mathbb{I}_{\{#1\}}}
\newcommand{\IntOI}{\int_{0}^{\infty}}
\newcommand{\phq}{\phi_{q}}
\newcommand{\phzeta}{\phi_{\zeta}}
\newcommand{\phReta}{\phi_{\Re(\zeta)}}
\newcommand{\dr}{{\mathtt{d}}}
\newcommand{\Ekap}{E_\kappa}
\newcommand{\gampkap}{\gamma_\kappa}
\newcommand{\Lkap}{L_\kappa}
\newcommand{\Tkap}{T_\kappa}
\newcommand{\Wp}{W_\phi}
\newcommand{\Wkap}{W_\kappa}
\newcommand{\Wpq}{W_{\phi_q}}
\newcommand{\CbOI}{\Cb_{\intervalOI}}
\renewcommand{\Ac}{\mathtt{A}}
\renewcommand{\Im}{\mathtt{Im}}
\renewcommand{\Re}{\mathtt{Re}}
\let\b\begin
\let\e\end
\let\f\frac
\let\bb\mathbb
\let\cal\mathcal
\let\bbm\mathbbm
\let\l\left
\let\r\right
\let\p\paragraph
 \numberwithin{equation}{section}
\date{}
\title{{\normalsize\textbf{\MakeUppercase{Bivariate Bernstein-gamma functions and moments of exponential functionals of subordinators}}}}
\author{{\sc A. Barker}\thanks{Department of Mathematics and Statistics, University of Reading, Reading, United Kingdom \  E-mail: Adam.Barker@pgr.reading.ac.uk} \: {\sc and}  {\sc M.~Savov}\thanks{Institute of
		Mathematics and Informatics, Bulgarian Academy of Sciences,  "Akad. Georgi Bonchev" bl. 8, Sofia 1113, Bulgaria \  E-mail: mladensavov@math.bas.bg}}   
\begin{document}
\renewcommand{\baselinestretch}{1.5}

\maketitle

\paragraph{Abstract}   In this paper, we extend recent work  on the functions that we call Bernstein-gamma to the class of  bivariate Bernstein-gamma functions. 
In the more general bivariate setting, we determine Stirling-type asymptotic bounds which generalise, improve upon and streamline those found for the univariate Bernstein-gamma functions. 
 Then, we demonstrate the importance and power of these results through an application to exponential functionals of L\'evy processes.
 In more detail, for a subordinator (a non-decreasing L\'evy process) $(X_s)_{s\geq0}$, we study its \textit{exponential functional},   $\int_0^t e^{-X_s}ds $, evaluated at a finite, deterministic time $t>0$.  Our main result here is an explicit infinite convolution formula for the Mellin transform (complex moments) of the exponential functional up to time $t$ which under very minor restrictions is shown to be equivalent to an infinite series. We believe this work can be regarded as a stepping stone towards a more in-depth study of general exponential functionals of \LLPs on a finite time horizon.

\p{Keywords:} L\'evy processes; Complex analysis; Special functions; Financial mathematics

 \section{Introduction, Background and Motivation}  

Each Bernstein function, $\phi$, 
  has been shown to have a unique associated Bernstein-gamma function, $\Wp$, defined  by  the recurrent equation
 \[ 
\Wp(z+1)=\phi(z)\Wp(z), \   \Re(z)>0; \qquad \Wp(1)=1,
 \] 
 see \cite[Section 6]{ps19} or \cite{ps16}.
In this work we    study a suitable generalisation of the class of Bernstein-gamma functions.  
Each bivariate Bernstein function, $\kappa$ (the \LLK exponent of a possibly-killed bivariate subordinator), is shown to have a unique  bivariate Bernstein-gamma function, $\Wkap$, defined by the recurrent equation
\[
		\Wkap\lbrb{\zeta,z+1}=\kappa\lbrb{\zeta,z}\Wkap\lbrb{\zeta,z}, \ \Re(\zeta)\geq 0,\Re(z)>0; 
		\qquad    \Wkap\lbrb{\zeta,1}=1.
\]
Our main analytical result on bivariate Bernstein-gamma functions, Theorem \mbox{\ref{thm:Stirling}}, provides a general Stirling asymptotic representation for $\Wkap$. It is an improvement upon \cite[Theorem 3.3]{ps16}, which only gives a Stirling representation for the absolute value 
 in the univariate case. Also, building upon \cite[Theorem 6.1]{ps19}, Theorem \mbox{\ref{thm:BernGammaExt}}   gives a Weierstrass product representation for $\Wkap$. The univariate Bernstein-gamma functions are intimately linked to Markovian self-similarity and other important quantities in probability and spectral theory, see \cite{abcpmv19,lps19,ps16,ps19,psy19} and the discussion below. In this work we will demonstrate that bivariate Bernstein-gamma functions also play a role in probability theory via the study of exponential functionals of \LLPs up to a deterministic horizon, although we expect further applications to appear.
 In more detail, we  apply our results on bivariate Bernstein-gamma functions to  exponential  functionals of L\'evy processes, as follows. For a subordinator $(X_s)_{s\geq0}$, we study its exponential functional, defined as $I_\phi(t):= \int_0^t e^{-X_s}ds $, $t\in[0,\infty)$.  Our main results concern information on the Mellin tranform of $I_\phi(t)$, that is $\cal{M}_{I_\phi(t)}(z+1):=\bb{E}[I_\phi(t)^z]$, for $\Re(z)>0$.    
 Here we only highlight the following representation: 
\b{equation}
\label{introformula}
\cal{M}_{I_\phi(t)}(z+1)  =    \f{ \Gamma(z+1)   }{  W_{\phi}(z+1)    }  -  
 \sum_{k=1}^\infty      \f{       \prod_{i=1}^{k}     [ \phi(z+i) - \phi(k)]    }{     \prod_{j=1}^{k-1}       [ \phi(j) - \phi(k)]  } 
 \f{     e^{-\phi(k)t}     }{   \phi(k)}   \f{ \Gamma(z+1)     }{   W_{\phi_{(k)}}(z+1)  } ,  
\e{equation}
which holds under a minor regularity condition and where    $ \phi_{(k)}(w):=  \phi(w+k)- \phi(k)$ is a Bernstein function and $W_{ \phi_{(k)}}$ is its corresponding univariate Bernstein-gamma function, see Definitions \mbox{\ref{bdefn}}, \mbox{\ref{bgdefn}}. We emphasize that  $\f{ \Gamma(z+1)     }{   W_{\phi_{(k)}}(z+1)}, k\geq  1,$ are the Mellin transforms of exponential functionals of subordinators on infinite horizon.
  
  The   question of finding precise information on the distribution of the exponential functional of a L\'evy process up to a finite, deterministic time was posed in  the 1990's, see e.g.\ \cite[Remark 3.2]{cpy97}, yet very few  works have since been able to cover this case. For the Brownian motion case an extensive study of the law of the exponential functional has been carried out in \cite{hy06}. Recently, interesting results concerning moments of exponential functionals of processes with independent increments have been discussed in \mbox{\cite{sv18}} where in particular $\Ebb{I^n_\phi(t)}, n\geq 1,$ have been computed when $\phi(0)=0$, see Theorem \mbox{\ref{thm:SV}} below. In this work we provide an expression for  any complex moments with positive real part. This work can be considered a stepping stone for a more in-depth study of exponential functionals of more general \LLPs up to a deterministic horizon, and the reason why the Mellin transform is a suitable starting point in such an endeavour can be explained as follows. Consider a \LLP $\xi$ which is killed at independent exponentially distrubuted random time $e_q, q>0$. Then, in a sequence of papers \cite{mz06,ps12,ps13,ps16}, it has been shown gradually that in general 
  \begin{equation*}
  \begin{split}
  &		\Ebb{\lbrb{\int_{0}^{e_q}e^{-\xi_s}ds}^z}=\kappa_{q,-}(0)\frac{\Gamma(z+1)}{W_{\kappa_{q,+}}(z+1)}W_{\kappa_{q,-}}(-z),\,\,\Rez\in\lbrbb{-1,0},    
  \end{split}
  \end{equation*}
   where $\kappa_{q,\pm}$ are bivariate Bernstein functions corresponding to the Wiener-Hopf  factors of $\xi$, see for example \cite[Chapter VI]{B96} for an introduction to Wiener-Hopf factorization.  On the other hand 
   \begin{equation*}
   \begin{split}
   &	\frac{1}{q}\Ebb{\lbrb{\int_{0}^{e_q}e^{-\xi_s}ds}^z}=\int_{  0}^\infty	e^{-qt} \Ebb{\lbrb{\int_{0}^{t}e^{-\xi_s}ds}^z} dt  
   \end{split}
   \end{equation*}
   and one can try to understand $\Ebb{\lbrb{\int_{0}^{t}e^{-\xi_s}ds}^z}$ through standard Laplace inversion. The point where subordinators always appear is the expression $\frac{\Gamma(z+1)}{W_{\kappa_{q,+}}(z+1)}$, which corresponds to an exponential functional of a killed subordinator.
    Therefore, it seems likely  that  our results, such as $\l(\ref{introformula}\r)$, may have implications well beyond subordinators.

The study of exponential functionals of L\'evy processes has received much attention in recent years. Advancements in the general theory can be found in \cite{ajr13,blm16,bpy04,by02,mz06,pps12,prs13,ps12,ps13,ps16,sv18,u95}. These quantities have been used in various areas of probability theory, such as branching processes  and processes in random environments, see \cite{pps16,lx18,p09}, spectral theory of non-self-adjoint semigroups, see \cite{ps19,psy19}, positive self-similar Markov processes, see \cite{by02b,kp13}, financial mathematics, see \cite{hy13} and \cite[Section 6.3]{by05}. Exponential functionals up to random exponential horizon have also appeared in the study of Asian options, see \cite{hk14,jv18,p13}. For Asian options, which are valued according to an integral of the form $\int_0^t e^{-X_u}du$, where $X_u$ denotes an asset price at time $u$, one needs to consider exponential functionals up to deterministic horizon, but the latter have proved to be extremely hard to deal with, and that is why researchers have focused on their Laplace transform. From this perspective
our studies, which deal with obtaining knowledge of the exponential functional up to a finite, deterministic time, can be relevant to pricing of Asian options. 
  We refer to \cite{by05,cpy01,y12} for further details on applications of exponential functionals.

 The remainder of the paper is structured as follows: Section \mbox{\ref{subsec:not}} introduces notation and key quantities; Section \hbox{\ref{bgresults}} provides the statements of  the main results  on Bernstein-gamma functions and bivariate Bernstein-gamma functions; Section \hbox{\ref{expresults}} contains the  statements of  the main results  on  exponential functionals, including the formula $\l(\ref{introformula}\r)$ for the Mellin transform; Sections  \mbox{\ref{sec:ProofsBernGammaBiv}} and \mbox{\ref{expproofs}} contain the proofs of the main results; Section  \mbox{\ref{sec:B2}}  collects functional properties and results on bivariate Bernstein functions, which can be of independent interest; Section  \hbox{\ref{lemmasproofs}} contains proofs of the remaining lemmas.

\section{Main Results} \label{mainresults}

\subsection{Preliminary Definitions and Notation}\label{subsec:not}

We start by defining some complex-analytical quantities. We use $\Cb$ to denote the complex plane. For any $z\in\Cb$, we write $z=\Re(z)+i\Im(z)$ and we set $z = |z|e^{i arg z}$
with the branch of the argument function defined via the convention $arg : \Cb\mapsto \lbrbb{-\pi,\pi}$. For any $z\in\Cb$, set $z=|z|e^{i \arg z}$ with the branch of the argument function defined via the convention $\arg:\Cb\mapsto\lbrbb{-\pi,\pi}$. We put $\log_0:\Cb\setminus\lbrbb{-\infty,0}\mapsto \Cb$ for the main branch of the complex logarithm whereby $\log_0(z)=\ln|z|+i\arg z$. For any $-\infty\leq a<b\leq \infty$, we denote by $\Cb_{\lbrb{a,b}}=\lbcurlyrbcurly{z\in\Cb:\,a<\Re(z)<b}$ and for any $a\in\intervalII$ we set $\Cb_a=\lbcurlyrbcurly{z\in\Cb:\,\Re(z)=a}$\label{Ca}. The notation $\Cb_{\lbbrb{a,b}}=\lbcurlyrbcurly{z\in\Cb:\,a\leq \Re(z)<b}$ and all possible variations thereof denote strips whose boundary lines are included or not in the respective subset of the complex plane.  We use $\Ac_{\lbrb{a,b}}$ for the set of holomorphic functions on $\Cb_{\lbrb{a,b}}$, whereas if $-\infty<a$ then $\Ac_{\lbbrb{a,b}}$ stands for the holomorphic functions on $\Cb_{\lbrb{a,b}}$ that can  be extended continuously to $\Cb_a$. Similarly, we have the spaces $\Ac_{\lbbrbb{a,b}}$ and $\Ac_{\lbrbb{a,b}}$. We employ $\Cb^2$ for the two dimensional complex numbers with $\Cb^2_{\lbrb{a,b}}$ standing for $\Cb_{\lbrb{a,b}}\times \Cb_{\lbrb{a,b}}$ and $\Ac^2_{\lbrb{a,b}}$ standing for the class of bivariate holomorphic functions on $\Cb^2_{\lbrb{a,b}}$.

Now,    let us state key definitions for  Bernstein-gamma functions, L\'evy processes, and exponential functionals.

\begin{definition}
	\label{bdefn}  
	A function $\phi$ is a Bernstein function, that is $\phi\in\Bc$, if for all $z\in\bb{C}_{[0,\infty)}$,
	\begin{align}\label{eq:Bern}
	\phi(z)&=\phi(0)+\textup{d} z+\IntOI \lbrb{1-e^{-zy}}\Pi(dy)\\
	\label{eq:Bern'}
	&=\phi(0)+\textup{d} z+z\IntOI e^{-zy}\ov(y)dy, 
	\end{align}
	where $\phi(0),\textup{d}\in[0,\infty)$, $\Pi$ denotes a measure on $[0,\infty)$ satisfying $\IntOI \min\curly{y,1}\Pi(dy)<\infty$, and $\ov(x):=\int_{x}^{\infty}\Pi(dy),$ for $x\geq0$.   Note that in any case $\phi\in \Ac_{\lbbrb{0,\infty}}$.
	For further  background on Bernstein functions, we refer to the book \cite{ssv12} or to the paper \cite[Section 3]{ps16}. 
\end{definition}

\n  With each $\phi\in\Bc$ there is an associated, possibly-killed subordinator (non-decreasing \LL process) $X=(X_t)_{t\geq0}$, whose \LLK exponent is defined by the relation
\begin{equation}\label{eq:LK}
\begin{split}
&\Ebb{e^{- z X_t}}= e^{-\phi(z)t}\text{ for $z\in\Cb_{\lbbrb{0,\infty}}$ and $ t\geq0$.} 
\end{split}
\end{equation}
For a subordinator with \LLK exponent $\phi$ as in $\l(\ref{eq:Bern}\r)$,    $ \textup{d} \geq0$ is the linear drift, and $\Pi$ is the L\'evy measure, 
which determines the size and intensity of its jumps. 
If $\phi(0)>0$, then we say that the subordinator $X$ is killed at rate $\phi(0)$, and it follows that  for an independent exponential random variable $e_{\phi(0)}$ with rate parameter $\phi(0)$,  
\[
X_t =   \begin{cases}    X_t    , \qquad    t<e_{\phi(0)}  \\    \hspace{1pt}    \infty    , \qquad     \hspace{0.5pt}   t\geq e_{\phi(0)}. 
\end{cases}
\]
If our original, unkilled subordinator $X$ has Laplace exponent $\phi$, then the process $X$ killed at rate $q>0$ has Laplace exponent $\phi_q(\lambda)= q + \phi(\lambda)$.

We recall that if $Y$ is an almost surely positive random variable then $\Mcc_{Y}(z):=\Ebb{Y^{z-1}}$ is by definition its Mellin transform which is always well-defined at least for $z\in\Cb_{1}=1+i\Rb$. Now we define Bernstein-gamma functions.
\begin{definition}
	\label{bgdefn} 
	For each $\phi\in\Bc$, its associated Bernstein-gamma function   $\Wp$ 
	is defined, for $z\in\CbOI$, as the solution in the space of Mellin transforms of positive random variables of the recurrent equation
	\begin{equation}\label{eq:Bern-Gamma}
	\Wp(z+1)=\phi(z)\Wp(z),\text{ for $z\in\CbOI$ with }\Wp(1)=1.
	\end{equation}
\end{definition}
The existence of $\Wp$ for any $\phi\in\Bc$ is proven in \cite[Section 4]{ps16}, where an extensive study of its complex-analytical properties has been carried out. Finally, we define the exponential functional of a subordinator:

\begin{definition}
	\label{exponentialfunctional}   
	For a subordinator with Laplace exponent $\phi\in\cal{B}$, its exponential functional is
	\b{equation*}
	I_\phi(t) :=   \int_0^t    e^{-X_s} ds=\int_{0}^{\min\curly{t,e_{\phi(0)}}} e^{-X_s} ds, \qquad t\in[0,\infty],
	\e{equation*}
	where  the terminal value, for $t=\infty$, can also be  denoted by $I_\phi:=I_\phi(\infty)= \int_0^\infty  e^{-X_s} ds$. Note that if $\phi(0)=0$ then $e_{\phi(0)}=\infty$ almost surely.
\end{definition}

\subsection{Bivariate \  Bernstein-Gamma \  Functions \  and \  their \  Stirling \  Type \ Approximation} \label{bgresults}


\n To extend the theory of Bernstein-gamma functions to the bivariate setting, 
 we first define $\Bc^2$, the class  of bivariate Bernstein functions,  which generalises the class $\cal{B}$ to the bivariate case.  
\begin{definition}\label{def:Bc2} We say that a function $\kappa$ is a bivariate Bernstein function 
if for all $\zeta, z \in \bb{C}_{[0,\infty)}$,
	\begin{equation}\label{eq:kappa}
		\kappa\lbrb{\zeta,z}=\kappa(0,0)+\dr_1\zeta+\dr_2z+\IntOI\IntOI \lbrb{1-e^{-\zeta x_1-zx_2}}\mu(dx_1,dx_2), 
	\end{equation}
	where $\kappa(0,0),\dr_1,\dr_2\in [0,\infty)$ and $\mu$ is a measure on $(0,\infty)\times(0,\infty)$ 
	such that \[\IntOI\IntOI \min\curly{x_1,1}\min\curly{x_2,1}\mu(dx_1,dx_2)<\infty.\]
	Note that according to Lemma \mbox{\ref{lem:repKappa1}} we have that $\kappa\in\Ac^2_{\lbbrb{0,\infty}}$.
\end{definition}
Observe that $\kappa\in\Bc^2$ if and only if $\kappa$ is the bivariate Laplace exponent of a possibly killed bivariate subordinator, see \cite[p.27]{d07} for further details.  For some important properties of the class $\cal{B}^2$, see Proposition \mbox{\ref{prop:kappa}}, which collects key results on the class $\cal{B}^2$. These are natural extensions of known properties of the univariate class $\cal{B}$ but seem not to have appeared in the literature.  Now, let us define the class of  bivariate Bernstein-gamma functions.
\begin{definition}\label{def:BernGammaBiv}
	We say that $\Wkap$ is a bivariate Bernstein-gamma function if 
	\begin{equation}\label{eq:BivBernGam}
		\Wkap\lbrb{\zeta,z+1}=\kappa\lbrb{\zeta,z}\Wkap\lbrb{\zeta,z}, \quad z\in\CbOI, \zeta\in \bb{C}_{[0,\infty)};
	\end{equation}
   for each $\zeta\in\Cb_{\lbbrb{0,\infty}}$, $\Wkap\lbrb{\zeta,1}=1$; $\Wkap\in \Ac^2_{\intervalOI}$ and for any $q\in\lbbrb{0,\infty}$ the function $\Wkap(q,\cdot)$ is the Mellin transform of a positive random variable.
\end{definition}
\begin{remark}\label{rem:BivBernGam}
	Taking $ \zeta = q \in  [0,\infty)$ in the formula in  \eqref{eq:kappa},  we can write $\kappa\lbrb{q,z}$ as
	\begin{equation}\label{eq:repKappa}
	\begin{split}
	\kappa\lbrb{q,z}&=\kappa\lbrb{q,0}+\dr_2z+\IntOI\lbrb{1-e^{-zx_2}}\lbrb{\IntOI e^{-qx_1}\mu\lbrb{dx_1,dx_2}}\\
	&=: \phq(0)+\dr_2z+\IntOI\lbrb{1-e^{-zx}}\mu_q(dx)=:\phq(z),
	\end{split}
	\end{equation}
	where, crucially, $\phq\in\Bc$. Then, using \eqref{eq:Bern-Gamma}, note that  $\Wkap\lbrb{q,z} \equiv \Wpq(z)$. 
	 This  gives a starting point from which we can begin to understand bivariate Bernstein-gamma functions through known univariate results, 
	 then we can extend results from $\Wkap\lbrb{q,z}$, $q\in[0,\infty)$, to $\Wkap\lbrb{\zeta,z}$, $\zeta\in\Cb_{\lbbrb{0,\infty}}$. 
\end{remark}
\begin{remark}\label{rem:BivBernGam1}
	Note the the reduction from the bivariate to the univariate case is simply done by taking $\kappa(q,z)=q+\phi(z), \phi\in\Bc$. This corresponds simply to the killing of one-dimensional subordinator.
\end{remark}
 It is proven in \cite[Section 6]{ps19} that $\Wp$ admits   an absolutely convergent Weierstrass product representation. In the following Theorem \mbox{\ref{thm:BernGammaExt}}, we  extend this infinite product representation of $\Wp$ 
 to $\Wkap$.

\begin{theorem}\label{thm:BernGammaExt}
	If $\kappa\in\Bc^2$, then $\Wkap$ as in Definition \ref{def:BernGammaBiv} exists and is unique, and the following product representation,  defined for $\lbrb{\zeta,z}\in \Cb_{\lbbrb{0,\infty}}\times \Cb_{\lbrb{0,\infty}}$ by
	\begin{equation}\label{eq:Bern-GammaExt}
	\frac{e^{-\gampkap(\zeta) z}}{\kappa\lbrb{\zeta,z}}\prod_{k=1}^{\infty}\frac{\kappa\lbrb{\zeta,k}}{\kappa\lbrb{\zeta,k+z}}e^{\frac{\kappa'_z(\zeta,k)}{\kappa\lbrb{\zeta,k}}z}, 
	\end{equation}
	\begin{equation}\label{eq:gamphz}
	\gampkap(\zeta)=\limi{n}\lbrb{\sum_{k=1}^{n}\frac{\kappa'_z(\zeta,k)}{\kappa(\zeta,k)}-\log_0\lbrb{\kappa(\zeta,n)}},  
	\end{equation}
	satisfies  $\l(\ref{eq:BivBernGam}\r)$. In particular,  it follows that  for $\lbrb{\zeta,z}\in \Cb_{\lbbrb{0,\infty}}\times \Cb_{\lbrb{0,\infty}}$  
	\begin{equation}\label{eq:Bern-GammaExt0}
	\Wkap(\zeta,z)=
	\frac{e^{-\gampkap(\zeta) z}}{\kappa\lbrb{\zeta,z}}\prod_{k=1}^{\infty}\frac{\kappa\lbrb{\zeta,k}}{\kappa\lbrb{\zeta,k+z}}e^{\frac{\kappa'_z(\zeta,k)}{\kappa\lbrb{\zeta,k}}z}, 
	\end{equation}
	so that the product    in  $\l(\ref{eq:Bern-GammaExt}\r)$ is indeed a product representation of $W_\kappa$.   
	Moreover, $\gampkap\in\Ac_{\lbbrb{0,\infty}}$,  and for $\lbrb{\zeta,z}\in \Cb_{\lbbrb{0,\infty}}\times \Cb_{\lbrb{0,\infty}}$, we can express $\Wkap(\zeta,z)$ as
	\begin{equation}\label{eq:Bern-GammaExtLim}
	\Wkap(\zeta,z)=\frac{1}{\kappa\lbrb{\zeta,z}}\limi{n}e^{z\log_0\kappa\lbrb{\zeta,n}}\prod_{k=1}^{n}\frac{\kappa\lbrb{\zeta,k}}{\kappa\lbrb{\zeta,k+z}}.
	\end{equation}
\end{theorem}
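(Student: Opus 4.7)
The plan is to bootstrap from the univariate Weierstrass product representation of \cite[Theorem 6.1]{ps19} and then promote continuity in the real variable $q$ to holomorphy in the complex variable $\zeta$, using the structural properties of $\Bc^2$ collected in Proposition \ref{prop:kappa}. By Remark \ref{rem:BivBernGam}, for every $q\in[0,\infty)$ the section $\phi_q(z):=\kappa(q,z)$ belongs to $\Bc$; the univariate theory therefore supplies existence of $W_{\phi_q}$ together with a Weierstrass product representation for it. This identifies the right-hand side of \eqref{eq:Bern-GammaExt} along the real slice $\zeta=q\geq 0$ with $W_{\phi_q}(z)$, and in particular verifies the recurrence \eqref{eq:BivBernGam}, the normalisation $W_\kappa(q,1)=1$, and the fact that $W_\kappa(q,\cdot)$ is a Mellin transform of a positive random variable, for each real $q$.

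The main analytical work is then to show that the expression \eqref{eq:Bern-GammaExt} defines a function of $(\zeta,z)$ which is holomorphic on $\Cb^2_{\lbrb{0,\infty}}$ and continuous up to $\zeta\in\Cb_0$. I would proceed in two steps. First, for $\gampkap$, I would establish from Proposition \ref{prop:kappa} the bound $\kappa'_z(\zeta,k)/\kappa(\zeta,k)=\log_0\kappa(\zeta,k+1)-\log_0\kappa(\zeta,k)+\bo{1/k^2}$ uniformly for $\zeta$ in compacta of $\Cb_{\lbbrb{0,\infty}}$, which makes the partial sums in \eqref{eq:gamphz} Cauchy with holomorphic limit in $\zeta$ that extends continuously to $\Cb_0$. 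Second, for the infinite product, a Taylor expansion of the logarithm of the $k$-th factor gives an $\bo{|z|^2/k^2}$ estimate, again locally uniform, using bivariate Bernstein bounds on $\kappa''_{zz}/\kappa$; absolute, locally uniform convergence follows, the product is non-vanishing and holomorphic in $(\zeta,z)$, and the only pole comes from the explicit $1/\kappa(\zeta,z)$ prefactor, which is harmless on $\Cb_{\lbrb{0,\infty}}$ since bivariate Bernstein functions satisfy $\Re(\kappa(\zeta,z))>0$ there (Proposition \ref{prop:kappa}).

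Having built the right-hand side as a holomorphic function and verified \eqref{eq:BivBernGam} along the real slice, the recurrence and normalisation extend to all $\zeta\in\Cb_{\lbbrb{0,\infty}}$ by the identity theorem for holomorphic functions of several variables. Uniqueness of $\Wkap$ within the class specified in Definition \ref{def:BernGammaBiv} follows from the univariate uniqueness of \cite[Theorem 4.1]{ps16}: any candidate $\widetilde{W}_\kappa$ has $\widetilde{W}_\kappa(q,\cdot)=W_{\phi_q}=W_\kappa(q,\cdot)$ for each real $q\geq 0$, and joint holomorphy then forces agreement throughout. Finally, \eqref{eq:Bern-GammaExtLim} is a direct rewriting: substituting \eqref{eq:gamphz} into $e^{-\gampkap(\zeta) z}$ cancels the $\exp\!\lbrb{z\kappa'_z(\zeta,k)/\kappa(\zeta,k)}$ factors in the product and produces the explicit $e^{z\log_0\kappa(\zeta,n)}$ term in the limit.

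The principal obstacle will be obtaining sharp enough derivative and logarithmic-derivative estimates for $\kappa(\zeta,k)$ that are uniform in $\zeta$ over compact subsets of $\Cb_{\lbbrb{0,\infty}}$; these are what render both the limit defining $\gampkap(\zeta)$ and the Weierstrass product absolutely and locally uniformly convergent in the complex regime. For real $\zeta$ the classical complete-monotonicity arguments handle the estimates, but the complex case requires the bivariate analogues of standard Bernstein identities (e.g.\ an expression of the form $\kappa(\zeta,z)/z = \dr_2 + \IntOI e^{-zy}\,\overline{\mu}_\zeta(y)\,dy$), which is precisely what Proposition \ref{prop:kappa} is set up to provide.
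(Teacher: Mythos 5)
Your proposal is correct and follows essentially the same route as the paper: bootstrap from the univariate Weierstrass product on the real slice $\zeta=q\ge 0$, extend analytically in $\zeta$ using the $\bo{1/k^2}$ logarithmic-derivative bounds of Proposition \ref{prop:kappa} (the paper runs this via uniformly bounded holomorphic partial sums plus Hartogs/Montel), obtain \eqref{eq:Bern-GammaExtLim} by the same cancellation of $\gampkap$ against the exponential factors, and deduce uniqueness from the univariate case. The only cosmetic difference is that you propagate the recurrence \eqref{eq:BivBernGam} off the real slice by the identity theorem, while the paper verifies it directly from the limit representation using $\limi{n}\kappa(\zeta,n+1)/\kappa(\zeta,n)=1$; both are valid.
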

The proof of this theorem is provided in Section \ref{sec:ProofsBernGammaBiv}.
We proceed with the derivation of the Stirling type approximation for $\Wkap$. For this purpose we need some more notation. Firstly, we introduce the function $\Lkap$, which  
contains the main asymptotic contribution in 
$\Wkap$, and is defined, for $\lbrb{\zeta,z}\in \Cb_{\lbbrb{0,\infty}}\times \Cb_{\lbrb{0,\infty}}$, as
\begin{equation}\label{eq:A}
\Lkap\lbrb{\zeta,z}:=\int_{1\rightarrow 1+z}\log_0\lbrb{\kappa\lbrb{\zeta,\chi}}d\chi,
\end{equation}
where the integral denotes the path integral along a contour starting from $1$ and ending at $1+z$ which lies in the domain of analyticity of  $\log_0\lbrb{\kappa\lbrb{\zeta,\cdot}}$. If $\Re (z) >-1$ then there is a straight line connecting $1$ to $1+z$ in the domain of analyticity of $\log_0\lbrb{\kappa\lbrb{\zeta,\cdot}}$ and we have
\begin{equation}\label{eq:A1}
\Lkap\lbrb{\zeta,z}=\int_{1\rightarrow 1+z}\log_0\lbrb{\kappa\lbrb{\zeta,\chi}}d\chi=z\int_{0}^{1}\log_0\lbrb{\kappa\lbrb{\zeta,1+zv}}dv.
\end{equation}
Now, for $\lbrb{\zeta,z}\in \Cb_{\lbbrb{0,\infty}}\times \Cb_{\lbrb{0,\infty}}$, $ \Re\lbrb{\kappa\lbrb{\zeta,z}}>0$ by \eqref{eq:limSup}, so  $\Lkap$ is well-defined on $\Cb_{\lbbrb{0,\infty}}\times\Cb_{\intervalOI}$.
We denote the floor function $\lfloor u\rfloor=\max\curly{n\in\N:\,n\leq u}$,  and define \[\Prm(u):=\lbrb{u-\lfloor u\rfloor}\lbrb{1-\lbrb{u-\lfloor u\rfloor}}.\]  The function $\Ekap$ corresponds to the error term in our Stirling approximation, and is defined, for $\lbrb{\zeta,z}\in \Cb_{\lbbrb{0,\infty}}\times \Cb_{\lbrb{0,\infty}}$,  as
\begin{equation}\label{eq:E}
\Ekap(\zeta,z)=\frac{1}{2}\int_{1}^{\infty}\Prm(u)\lbrb{\log_0\lbrb{\frac{\kappa\lbrb{\zeta,u+z}}{\kappa\lbrb{\zeta,u}}}}''du. 
\end{equation}
Now we are ready to state Theorem \mbox{\ref{thm:Stirling}}, the Stirling asymptotic representation for $\Wkap$. For the absolute value of the univariate case a similar, but less wieldy, asymptotic representation has been derived in \cite[Theorem 4.2]{ps16}. 
\begin{theorem}\label{thm:Stirling}
	Let $\kappa\in\Bc^2$. Then,  for $\lbrb{\zeta,z}\in \Cb_{\lbbrb{0,\infty}}\times \Cb_{\lbrb{0,\infty}}$, 
	we have that
	\begin{equation}\label{eq:Stirling}  
		\Wkap\lbrb{\zeta,z}=\frac{\kappa^{\frac{1}{2}}\lbrb{\zeta,1}}{\kappa\lbrb{\zeta,z}\kappa^{\frac{1}{2}}\lbrb{\zeta,1+z}}e^{\Lkap\lbrb{\zeta,z}}e^{-\Ekap\lbrb{\zeta,z}},
	\end{equation}
	\begin{equation}\label{eq:Ek}   
		\sup_{\kappa\in\Bc^2}\sup_{\lbrb{\zeta,z}\in\Cb_{\lbbrb{0,\infty}}\times\CbOI}\abs{\Ekap\lbrb{\zeta,z}}\leq 2,
	\end{equation}
	
		\begin{equation}\label{eq:limEk}
	\limi{\Rez}\Ekap\lbrb{\zeta,z}=\frac{1}{2}\int_{1}^{\infty}\Prm(u)\lbrb{\lbrb{\frac{\kappa_z'\lbrb{\zeta,u}}{\kappa\lbrb{\zeta,u}}}^2-\frac{\kappa_z''(\zeta,u)}{\kappa\lbrb{\zeta,u}}}du:=\Tkap\lbrb{\zeta},
	\end{equation}
	and for each fixed $z\in\CbOI$, we have that $\limi{\Re(\zeta)}\Ekap\lbrb{\Reta,z}=0$.
\end{theorem}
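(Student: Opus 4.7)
The plan is to start from the Weierstrass limit representation \eqref{eq:Bern-GammaExtLim} obtained in Theorem~\ref{thm:BernGammaExt}, take logarithms, and apply the second--order Euler--Maclaurin summation formula to isolate the four factors appearing in \eqref{eq:Stirling}. Throughout, fix $\zeta\in\Cb_{\lbbrb{0,\infty}}$ and $z\in\Cb_{\lbrb{0,\infty}}$, and set
\begin{equation*}
g(u):=\log_0\kappa(\zeta,u+z)-\log_0\kappa(\zeta,u),\qquad u\geq 1.
\end{equation*}
By \eqref{eq:limSup} and Proposition~\ref{prop:kappa} one has $\Re\kappa(\zeta,w)>0$ for $\Re w\geq 1$, so $g$ is smooth on $[1,\infty)$ and the path integral definition \eqref{eq:A} of $\Lkap(\zeta,z)$ is unambiguous.

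Taking $\log_0$ in \eqref{eq:Bern-GammaExtLim} yields
\begin{equation*}
\log_0 \Wkap(\zeta,z)+\log_0\kappa(\zeta,z)=\lim_{n\to\infty}\Big[z\log_0\kappa(\zeta,n)-\sum_{k=1}^{n}g(k)\Big].
\end{equation*}
The Euler--Maclaurin identity, obtained by integrating $\int_{1}^{n}g'(u)(u-\lfloor u\rfloor-\tfrac12)\,du$ by parts twice and using that $\Prm$ vanishes at the integers, gives
\begin{equation*}
\sum_{k=1}^{n}g(k)=\int_{1}^{n}g(u)\,du+\frac{g(1)+g(n)}{2}+\frac{1}{2}\int_{1}^{n}\Prm(u)g''(u)\,du.
\end{equation*}
The substitution $v=u+z$ combined with Cauchy's theorem, applied to the rectangle with vertices $1,1+z,n+z,n$ (which lies in the zero-free region of $\kappa(\zeta,\cdot)$), rewrites $\int_{1}^{n}g(u)du=\int_{n}^{n+z}\log_0\kappa(\zeta,v)\,dv-\Lkap(\zeta,z)$. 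Proposition~\ref{prop:kappa} supplies $\kappa'_{z}(\zeta,w)/\kappa(\zeta,w)\to 0$ as $\Re w\to\infty$, which implies both $g(n)\to 0$ and $z\log_0\kappa(\zeta,n)-\int_{n}^{n+z}\log_0\kappa(\zeta,v)\,dv\to 0$. Collecting terms, using $g(1)=\log_0\kappa(\zeta,1+z)-\log_0\kappa(\zeta,1)$ and exponentiating yields \eqref{eq:Stirling}.

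For the universal bound \eqref{eq:Ek}, the idea is to integrate $\Ekap(\zeta,z)=\tfrac12\int_{1}^{\infty}\Prm(u)g''(u)\,du$ by parts twice: since $\Prm(1)=0$ and $g'(u)\to 0$ at infinity, the boundary terms drop and one is left with a sum of increments of $g$ and $g'$ across integers. On the real axis $\zeta=q\in\Rp$ the slice $\kappa(q,\cdot)=\phi_{q}$ is a univariate Bernstein function, hence $\log_{0}\phi_{q}$ is concave, $\phi_{q}'/\phi_{q}$ is nonnegative and decreasing, and its total variation on $[1,\infty)$ is at most $\phi_{q}'(1)/\phi_{q}(1)\leq 1$. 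These real estimates, combined with the integral representation \eqref{eq:repKappa} and the comparison bounds of Proposition~\ref{prop:kappa} relating $|\kappa(\zeta,w)|$ to $\kappa(\Re\zeta,\Re w)$, yield the universal constant $2$, sharpening the univariate reasoning of \cite[Theorem~4.2]{ps16}. This step is the main obstacle I anticipate: extracting a single constant valid simultaneously for all $\kappa\in\Bc^{2}$ and every $(\zeta,z)$ in the strip requires keeping the estimates tight in both variables at once.

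The two limit statements in \eqref{eq:limEk} then follow by dominated convergence, justified by \eqref{eq:Ek}. Expanding
\begin{equation*}
g''(u)=\bigg[\frac{\kappa_{z}''(\zeta,u+z)}{\kappa(\zeta,u+z)}-\Big(\frac{\kappa_{z}'(\zeta,u+z)}{\kappa(\zeta,u+z)}\Big)^{\!2}\bigg]-\bigg[\frac{\kappa_{z}''(\zeta,u)}{\kappa(\zeta,u)}-\Big(\frac{\kappa_{z}'(\zeta,u)}{\kappa(\zeta,u)}\Big)^{\!2}\bigg],
\end{equation*}
the first bracket vanishes pointwise as $\Re z\to\infty$ because $\kappa'_{z},\kappa''_{z}$ remain bounded while $|\kappa(\zeta,u+z)|\to\infty$ (Proposition~\ref{prop:kappa}), producing exactly $\Tkap(\zeta)$. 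For fixed $z$ and $\Re\zeta\to\infty$ along the real axis, writing $q=\Re\zeta$ and using \eqref{eq:repKappa}, we have $\phi_{q}(0)=\kappa(q,0)\to\infty$ (driven by the $\dr_{1}q$ term) while $\mu_{q}\to 0$ by dominated convergence, so both brackets of $g''$ tend to zero pointwise and $\Ekap(q,z)\to 0$.
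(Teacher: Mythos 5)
Your derivation of \eqref{eq:Stirling} follows the paper's route essentially verbatim: Euler--Maclaurin with remainder $\tfrac12\int\Prm(u)g''(u)\,du$ applied to $S_n=\sum_{k=1}^n g(k)$, Cauchy's theorem on the parallelogram $1,1+z,n+z,n$ to convert $\int_1^n g$ into $\int_{n\to n+z}-\Lkap(\zeta,z)$, and the two vanishing limits $g(n)\to0$ and $z\log_0\kappa(\zeta,n)-\int_{n\to n+z}\log_0\kappa(\zeta,\cdot)\to0$. That part is correct.

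The gap is exactly where you flag it: the uniform bound \eqref{eq:Ek}. Your proposed route (double integration by parts, total variation of $\phi_q'/\phi_q$ on the real axis, then a transfer to complex $\zeta,z$) is not carried out and is the wrong tool here --- the total-variation estimate is a statement about the real slice and does not by itself control $|g''|$ for complex $z$, nor does it obviously produce a constant uniform over all of $\Bc^2$. The bound is in fact immediate from estimates you already have available: by \eqref{eq:kappa'_kappa_1} and \eqref{eq:kappa''_kappa_1} of Proposition \ref{prop:kappa},
\begin{equation*}
\abs{\lbrb{\log_0\frac{\kappa(\zeta,u+z)}{\kappa(\zeta,u)}}''}\leq \abs{\frac{\kappa_z''(\zeta,u+z)}{\kappa(\zeta,u+z)}}+\abs{\frac{\kappa_z'(\zeta,u+z)}{\kappa(\zeta,u+z)}}^2+\abs{\frac{\kappa_z''(\zeta,u)}{\kappa(\zeta,u)}}+\abs{\frac{\kappa_z'(\zeta,u)}{\kappa(\zeta,u)}}^2\leq \frac{8}{(u+\Rez)^2}+\frac{8}{u^2}\leq\frac{16}{u^2},
\end{equation*}
and since $\sup_{u\geq1}\Prm(u)=\tfrac14$ this gives $\abs{\Ekap}\leq\tfrac12\cdot\tfrac14\int_1^\infty 16u^{-2}\,du=2$, uniformly in $\kappa$, $\zeta$, $z$. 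No integration by parts is needed. A second, smaller error: for $\limi{\Re(\zeta)}\Ekap=0$ you assert that each bracket of $g''$ vanishes pointwise, relying on $\phi_q(0)\to\infty$ ``driven by the $\dr_1 q$ term''; but $\dr_1$ may be zero, and in any case $\kappa_z'(q,u)\to\dr_2$, which need not vanish, so each bracket tends to $-\dr_2^2/\kappa^2(\infty,\cdot)$ rather than to $0$. One must keep the \emph{difference} of the two brackets, observe that the $\kappa_z''$ terms vanish and the limit reduces to $\tfrac12\int_1^\infty\Prm(u)\lbrb{-\dr_2^2/\kappa^2(\infty,u+z)+\dr_2^2/\kappa^2(\infty,u)}du$, and then argue this is zero (trivially if $\dr_2=0$, and via the divergence of $\kappa(\infty,\cdot)$ otherwise), which is how the paper concludes.
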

The proof of this theorem is provided in Section \mbox{\ref{sec:ProofsBernGammaBiv}}. Now we state two key lemmas.
%
Lemma  \mbox{\ref{philemma}}, is a complex generalisation of the result  \cite[Prop. 3.1 (8)]{ps16},  that for   all $\phi\in\cal{B}$, $a\in\bb{R}$, and  $u\in (0,\infty)$, uniformly among $a$ in compact intervals in $\bb{R}$,  
\b{equation}
\label{phirealresult}
  \lim_{u \to \infty}    \f{   \phi(u   +  a)}{\phi(u)} =1 .
\e{equation}
  \b{lemma}    \label{philemma}   For each $\phi\in\cal{B}$, $z\in\bb{C}$, uniformly among $z$ in compact subsets of the complex plane $\Cb$, 
  \vspace{-8pt}
\b{equation}
\label{philemmaeqn}
  \lim_{u \to \infty}    \f{   \phi(u   +  z)}{\phi(u)} =1 .
\e{equation}
\e{lemma}
   The next lemma is a generalisation, from the standard gamma function to the class of Bernstein-gamma functions, of the following result. For each $z\in\bb{C}$, 
\[
\lim_{u\to\infty}   \f{   \Gamma(u+1)   (u+1)^z   }{\Gamma(u+1+z)}   =1  . 
\]

\b{lemma} \label{Wlemma}
For each $\phi\in\cal{B}$, $z\in\CbOI$, uniformly among $z$ in compact subsets of the complex half-plane $\Cb_{\lbbrb{0,\infty}}$,
  \vspace{-8pt}
\b{equation}
\label{Wlemmaeqn}
\lim_{u\to\infty}  \f{ W_\phi(u+1)  \phi^{z}(u+1)   }{  W_\phi(u+1+z)    }  =1.
\e{equation}
\e{lemma}
The proof of Lemma  \mbox{\ref{Wlemma}} builds upon Lemma  \mbox{\ref{philemma}}. Both proofs  are contained in Section \mbox{\ref{lemmasproofs}}.

\subsection{Applications to Exponential Functionals up to a Finite Time}  \label{expresults}   
 The first of our key results on  exponential functionals up to a finite time is an infinite convolution formula for the Mellin transform: 
 \begin{theorem} \label{convolutionformula}   For each possibly killed subordinator with Laplace exponent $\phi\in \Bc$, for $t\in(0,\infty)$, and for $z\in\cc$, the Mellin transform, $\cal{M}_{I_\phi(t)}(z+1)=\bb{E}[I_\phi^z(t)]$,  satisfies 
  \begin{equation}
  \begin{split}
    \label{convformula}
 \bb{E}[I_\phi^z(t)] = 1 \ast  \f{z t^{z-1}}{ e^{\phi(1)t}     } 
\ast \overset{\infty}{\underset{k=1}{\conv}} \Bigg[   \delta_0(dt)    &+   \sum_{m=1}^\infty   \f{  (z)^{(m)} (-[\phi(k+1)-\phi(k)])^m }{  (m!)^2  }   \f{ m t^{m-1}}{ e^{\phi(k)t}}
\\
 &+  [\phi(z+k)-\phi(k)] \hspace{-2pt}   \sum_{m=0}^\infty      \f{  (z)^{(m)} ( - [\phi(k+1)-\phi(k)])^{m} \hspace{-4pt} }{  (m!)^2  }     \f{t^{m}}{e^{\phi(k)t}  }    \Bigg],
\end{split}
  \end{equation} 
   where the symbol $\ast$ denotes convolution $(f\ast g)(t):= \int_0^t f(s) g(t-s)ds$, the symbol $\conv$ means infinite convolution, $\delta_0(dt)$ is the Dirac measure at the point $0$, and $(z)^{(m)} := z(z+1)\cdots(z+m-1)$ is the rising factorial function. 

 \end{theorem}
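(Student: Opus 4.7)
The natural starting point is Laplace transformation in $t$. For $q>0$, independence of an $\textup{Exp}(q)$-distributed $e_q$ from $X$ and Fubini yield
\[ q\int_0^\infty e^{-qt}\,\bb{E}[I_\phi^z(t)]\,dt \;=\; \bb{E}[I_{\phi_q}^z] \;=\; \f{\Gamma(z+1)}{W_{\phi_q}(z+1)}, \]
where $\phi_q(w):=q+\phi(w)\in\Bc$ is the Laplace exponent of $X$ killed at rate $q$, and the second equality is the classical Carmona--Petit--Yor identity for subordinators (the specialisation to subordinators of the Wiener--Hopf formula quoted in the introduction). My plan is therefore to compute the Laplace transform of the right-hand side of $(\ref{convformula})$, verify it equals $\Gamma(z+1)/(qW_{\phi_q}(z+1))$, and invoke uniqueness of the Laplace transform.

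Convolutions become products, so the first two factors contribute $\f{1}{q}\cdot\f{\Gamma(z+1)}{\phi_q(1)^z}$. For the $k$-th factor in the infinite convolution, setting $\alpha_k:=-[\phi(k+1)-\phi(k)]/\phi_q(k)$ and using $\int_0^\infty e^{-\phi_q(k)t}t^{m-1}dt=\Gamma(m)/\phi_q(k)^m$ together with the generalised binomial series $\sum_{m\geq 0}\f{(z)^{(m)}}{m!}\alpha^m=(1-\alpha)^{-z}$ (valid because concavity of $\phi$ yields $\phi(k+1)-\phi(k)\leq\phi(1)$, whence $|\alpha_k|<1$ once $q>\phi(1)$), the Laplace transform of the $k$-th bracket collapses to
\[ (1-\alpha_k)^{-z}\l[1+\f{\phi(z+k)-\phi(k)}{\phi_q(k)}\r] \;=\; \l(\f{\phi_q(k)}{\phi_q(k+1)}\r)^{\!z}\f{\phi_q(z+k)}{\phi_q(k)}, \]
where the final step uses $1-\alpha_k=\phi_q(k+1)/\phi_q(k)$. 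Taking the product over $k$, the $(\phi_q(k)/\phi_q(k+1))^z$ factors telescope to $(\phi_q(1)/\phi_q(N+1))^z$ and cancel the $\phi_q(1)^z$ from the second factor, so the Laplace transform of the whole right-hand side equals
\[ \f{\Gamma(z+1)}{q}\lim_{N\to\infty}\f{1}{\phi_q(N+1)^z}\prod_{k=1}^{N}\f{\phi_q(z+k)}{\phi_q(k)}. \]
After a routine index shift, the limit representation $(\ref{eq:Bern-GammaExtLim})$ in Theorem \ref{thm:BernGammaExt} applied to $\kappa(q,\cdot)=\phi_q(\cdot)$, combined with Lemma \ref{philemma} (to swap $\phi_q(N+1)^z$ for $\phi_q(N)^z$ inside the limit), identifies this limit as $1/W_{\phi_q}(z+1)$, matching the target.

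The hard part will be justifying rigorously the swap of the Laplace transform with the infinite convolution, i.e.\ checking that the infinite product of per-factor Laplace transforms is genuinely the Laplace transform of the full convolution and that this convolution defines a locally finite measure on $[0,\infty)$. I would first fix $q>\phi(1)$ so that $|\alpha_k|<1$ uniformly in $k$, then combine this with Lemma \ref{philemma} to establish absolute convergence of the infinite product locally uniformly in $z\in\cc$, and apply a dominated-convergence / Fubini argument to legitimise termwise Laplace transformation. Uniqueness of the Laplace transform then yields $(\ref{convformula})$ as a functional identity in $t$ for such $q$; continuity in $t$ of both sides (which follows from the integrability of $I_\phi(t)^z$ over compacts) extends the identity to all $t>0$.
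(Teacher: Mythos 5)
Your proposal is correct and follows essentially the same route as the paper: Laplace transformation in $t$ via $q\,\Lc|_q\{\Ebb{I_\phi^z(\cdot)}\}=\Gamma(z+1)/W_{\varphi_q}(z+1)$, the infinite-product representation of $1/W_{\varphi_q}(z+1)$ (the paper's Lemma \ref{productformula}, derived exactly as you indicate from \eqref{eq:Bern-GammaExtLim} and Lemma \ref{Wlemma}), the generalised binomial expansion of each factor, and term-by-term identification of elementary Laplace transforms — you merely run the computation from the convolution side and invoke uniqueness, whereas the paper inverts. The only minor difference is that you secure $|\alpha_k|<1$ by restricting to $q>\phi(1)$, while the paper gets the uniform-in-$q$ bound $[\varphi_q(k+1)-\varphi_q(k)]/\varphi_q(k)\leq 1/k<1$ from \eqref{eq:phi'_phi}; your restriction is harmless for Laplace-transform uniqueness.
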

 While this result is interesting on its own, it makes the computations of the moments hard, and for this purpose we shall express our formula as an infinite sum rather than an infinite convolution. 
This requires a slight regularity condition on the underlying Bernstein function:
 \begin{definition}[Regularity Condition]
 \label{regularitycondition} We impose that the derivative of our subordinator's Laplace exponent satisfies $\beta(\phi')>-1$, where $\beta(\phi')$ denotes the lower Matuszewska index of $\phi'$, defined as the infimum of $\beta\in\bb{R}$ for which there exists $C>0$ such  that for each $\Lambda>1$, $\phi'(\lambda x)/\phi'(x) \geq (1+o(1))C \lambda^\beta$, uniformly in $\lambda\in [1, \Lambda] $, as $x\to\infty$.      See \cite[p68]{bgt89} for more details but we highlight that cases like $\phi\in \Bc$ with $d>0$ and/or $\bar{\Pi}(x)\sim x^{-\alpha}, x\to 0, \alpha\in\lbrb{0,1}$ satisfy this condition but are a small sample of cases that fall under this definition.
 \end{definition}
 

 \b{theorem}
 \label{mainthm} For each subordinator whose Laplace exponent $\phi$  satisfies the condition in Definition \mbox{\ref{regularitycondition}}, for all $t\in(0,\infty]$,  
  and for all $z\in\cc$,
 \b{equation}
\label{theoremformula}
\cal{M}_{I_\phi(t)}(z+1)=\bb{E}[I_\phi^z(t)]  =   \f{ \Gamma(z+1)   }{  W_{\phi}(z+1)    }  -  
 \sum_{k=1}^\infty      \f{       \prod_{j=1}^{k}     [ \phi(z+j) - \phi(k)]    }{     \prod_{j=1}^{k-1}       [ \phi(j) - \phi(k)]  } 
 \f{     e^{-\phi(k)t}     }{   \phi(k)}   \f{ \Gamma(z+1)     }{   W_{\phi_{(k)}}(z+1)  } ,  
\e{equation}
  where    $ \phi_{(k)}(w):=  \phi(w+k)- \phi(k)$ is a Bernstein function,  and $W_{ \phi_{(k)}}$ is  its corresponding Bernstein-gamma function.
 \e{theorem}

 \b{remark} Our constraint excludes  cases in which $\phi$ is \textit{slowly varying} (see \cite[p6]{bgt89}), but it should be noted that even for e.g.\ $\phi(x)=\ln(1+x)$,  Theorem \mbox{\ref{mainthm}} still holds in a region of the form $\Re(z)>c_{t,\phi}$, with the region depending on  $t$ and $\phi$. 
We are unable to obtain any partial results in only the most pathological cases, e.g.\ $\phi(x)=\ln(\ln(e+x))$, for which the rate of growth of $\phi(x)$ to $\infty$, as $x\to\infty$,  is too slow. We emphasise that the Matuszewska indices of the derivatives of the Bernstein functions in this remark are of value precisely $-1$.
 \e{remark}
 \begin{remark}\label{rem:minusMom}
   Note that since $\int_{ 0}^{e_q}e^{-X_s}ds$ has negative moments of order betwen $(-1,0)$, see [Theorem 2.4]\cite{ps16}, then clearly $I_{\phi}(t)$ has the same moments for any $t>0$. Their evaluation is excluded in the statement of this theorem as there is a technical difficulty to obtain a similar expression. However, from \eqref{eq:LT} below, one can attack the problem from the fact that
   \[ \f{\Gamma(z+1)      }{   W_{ \varphi_q}(z+1)      }=\frac{z}{\varphi_q(z)} \f{\Gamma(z)      }{   W_{ \varphi_q}(z)      },\]
   where $\varphi_q(z)=\phi(z)+q$ and therefore $\bb{E}[I_\phi^z(t)]$ is the convolution in $t$ of $\bb{E}[I_\phi^{z-1}(t)]$ with $ze^{-\phi(z)t}$ or
   \begin{equation*}
   \begin{split}
   &		   \bb{E}[I_\phi^z(t)] =ze^{-\phi(z)t}\int_{  0}^t\bb{E}[I_\phi^{z-1}(s)]e^{\phi(z)s}ds.
   \end{split}
   \end{equation*}
   This equation can be analysed by suitable differentiation and rearrangement.
 \end{remark}
 One can quite easily obtain, using elementary methods, an explicit formula for the positive integer moments of the exponential functional of a subordinator, as it appears in the recent work \cite[Corollary 1]{sv18}. 
\begin{theorem}[Salminen, Vostrikova 2018]\label{thm:SV} For all $\phi\in\cal{B}$ with $\phi(0)=0$, and for all $n\in\bb{N}$,
\begin{equation} 
\label{sv}
\Ebb{I_\phi^n(t)}=      n!  \sum_{k=0}^{n-1}    \f{  e^{-\phi(k)t} -  e^{-\phi(n)t}    }{ \    \underset{0\leq j \leq n; \hspace{2pt}  j\neq k}{\prod}     [ \phi(j) - \phi(k)]      }.
\end{equation}
\end{theorem}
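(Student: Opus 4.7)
The plan is to reduce the moment to a nested integral using the independent and stationary increments of the subordinator, and then apply a Laplace transform in $t$ together with a partial-fraction decomposition. Starting from
\[
I_\phi^n(t) = n!\int_{0<s_1<\cdots<s_n<t} e^{-\lbrb{X_{s_1}+\cdots+X_{s_n}}}ds_1\cdots ds_n,
\]
I would rewrite the exponent via the telescoping identity $X_{s_1}+\cdots+X_{s_n}=nX_{s_1}+(n-1)(X_{s_2}-X_{s_1})+\cdots+(X_{s_n}-X_{s_{n-1}})$ and use independence together with the definition \eqref{eq:LK} of $\phi$. With the convention $s_0:=0$ this gives, after taking expectation,
\[
\Ebb{I_\phi^n(t)} = n!\int_{0<s_1<\cdots<s_n<t}\prod_{k=1}^n e^{-\phi(n-k+1)(s_k-s_{k-1})}ds_1\cdots ds_n.
\]

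Next I would apply the Laplace transform in $t$. The substitution $u_k=s_k-s_{k-1}$ followed by exchanging the order of integration collapses the nested integral into a product, giving
\[
\int_0^\infty e^{-\lambda t}\Ebb{I_\phi^n(t)}dt \ =\ \frac{n!}{\lambda\prod_{k=1}^n\lbrb{\lambda+\phi(k)}} \ =\ \frac{n!}{\prod_{k=0}^n\lbrb{\lambda+\phi(k)}},
\]
where the second equality uses $\phi(0)=0$ to absorb $\lambda$ into the product. Since any non-trivial $\phi\in\Bc$ with $\phi(0)=0$ is strictly increasing on $[0,\infty)$, the values $\phi(0),\ldots,\phi(n)$ are pairwise distinct, so a standard partial-fraction expansion gives
\[
\frac{n!}{\prod_{k=0}^n\lbrb{\lambda+\phi(k)}} = n!\sum_{k=0}^n\frac{B_k}{\lambda+\phi(k)},\qquad B_k:=\prod_{\substack{0\leq j\leq n\\ j\neq k}}\frac{1}{\phi(j)-\phi(k)}.
\]

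Inverting the Laplace transform term by term yields $\Ebb{I_\phi^n(t)}=n!\sum_{k=0}^n B_k e^{-\phi(k)t}$. The rearrangement into the form stated in Theorem \ref{thm:SV} then follows from the identity $\sum_{k=0}^n B_k=0$, which holds because the rational function on the left decays like $\lambda^{-(n+1)}$ as $\lambda\to\infty$ while each summand decays only like $\lambda^{-1}$; substituting $B_n=-\sum_{k=0}^{n-1}B_k$ and combining exponentials produces exactly \eqref{sv}. There is no substantive obstacle in the argument: the whole proof is an explicit computation, and the only mild subtlety is distinctness of $\phi(0),\ldots,\phi(n)$ for the partial-fraction step, which is automatic; the trivial case $\phi\equiv 0$, where $I_\phi(t)=t$ and both sides reduce to $t^n$, is handled by a limiting argument.
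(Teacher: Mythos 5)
Your proof is correct, and it is worth noting that the paper does not actually prove Theorem \ref{thm:SV}: it is quoted from \cite[Corollary 1]{sv18}, and the paper instead proves the generalisation in Corollary \ref{integermoments}. Your argument is therefore a self-contained alternative, and it differs from the paper's treatment in two ways. First, you obtain the Laplace transform $n!/\prod_{k=0}^{n}(\lambda+\phi(k))$ elementarily, from the nested-integral representation of $I_\phi^n(t)$ and the stationary independent increments of $X$, whereas the paper derives the same identity \eqref{eq:LT1} from the Mellin-transform/Bernstein-gamma machinery ($\Ebb{I_{\varphi_q}^z(\infty)}=\Gamma(z+1)/W_{\varphi_q}(z+1)$ specialised to $z=n$). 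Second, you invert by partial fractions and justify $\sum_{k=0}^n B_k=0$ by the decay of the rational function at infinity, while the paper inverts the transform as an $n$-fold convolution of exponentials, evaluates it by induction on $n$, and proves the same cancellation identity as a Vandermonde-determinant fact (Proposition \ref{prop:Van}). Your route is shorter and more elementary for the unkilled case $\phi(0)=0$; the paper's route has the advantage of extending uniformly to killed subordinators and of sitting inside the general Mellin-transform framework used for Theorem \ref{mainthm}. Two minor points: the distinctness of $\phi(0),\dots,\phi(n)$ needed for the partial-fraction step indeed holds for every non-degenerate $\phi\in\Bc$ since $\phi'>0$ unless $\phi\equiv 0$; and in the degenerate case $\phi\equiv 0$ the right-hand side of \eqref{sv} is not defined (the denominators vanish), so "handled by a limiting argument" should really be "excluded from the statement" rather than recovered by a limit.
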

Using our methodology, based on Laplace inversion, 
we can deduce the following formula for integer moments, which extends \cite[Corollary 1]{sv18} to integer moments of killed subordinators.
\begin{corollary} 
\label{integermoments}
 For each subordinator with Laplace exponent $\phi\in\cal{B}$, for $t\in(0,\infty)$ and $n\in\bb{N}$,
 \b{equation}
 \label{intmoments}
 \begin{split}
 	\Ebb{I_\phi^n(t)} &=  n!   \sum_{k=0}^{n-1}     \f{       e^{-\phi^*(k)t}-e^{-\phi^*(n)t}     }{  \   \underset{0\leq j \leq n; \hspace{2pt}  j \neq k}{\prod}     [ \phi^*(j) - \phi^*(k)]  }\\
 	&=n!\sum_{k=0}^{n}     \f{       e^{-\phi^*(k)t}}    {  \   \underset{0\leq j \leq n; \hspace{2pt}  j \neq k}{\prod}     [ \phi^*(j) - \phi^*(k)]  },
 \end{split}
 \e{equation}
 where $\phi^*(j)=\phi(j),j\geq 1,$ and $\phi^*(0)=0$ and if $\phi(0)=0$ then $\phi^*=\phi$.
\end{corollary}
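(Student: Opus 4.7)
The plan is to evaluate the formula \eqref{theoremformula} of Theorem \ref{mainthm} at the integer point $z = n$ and reduce it algebraically to the claim. The key observation is that when $z = n \in \N$, the infinite series in \eqref{theoremformula} truncates to $k \in \{1,\ldots,n\}$: indeed, for each $k \geq n+1$, the factor in the numerator corresponding to $j = k - n$ equals $\phi(n+(k-n)) - \phi(k) = 0$, annihilating the entire term.

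I would then use the defining recurrence $W_\phi(z+1) = \phi(z)W_\phi(z)$ (with $W_\phi(1) = 1$) to evaluate
\begin{equation*}
W_\phi(n+1) = \prod_{j=1}^n \phi(j), \qquad W_{\phi_{(k)}}(n+1) = \prod_{j=1}^n\bigl[\phi(k+j) - \phi(k)\bigr].
\end{equation*}
Substituting these together with $\Gamma(n+1) = n!$ into \eqref{theoremformula} gives, for the $k$-th summand with $1 \leq k \leq n$, a ratio of three products. A direct inspection shows that the numerator $\prod_{j=1}^k[\phi(n+j) - \phi(k)] = \prod_{m=n+1}^{n+k}[\phi(m) - \phi(k)]$ precisely cancels with the tail of $\prod_{j=1}^n[\phi(k+j) - \phi(k)] = \prod_{m=k+1}^{n+k}[\phi(m) - \phi(k)]$, leaving
\begin{equation*}
\frac{n!\, e^{-\phi(k)t}}{\phi(k)\prod_{j=1,\, j\neq k}^n[\phi(j) - \phi(k)]}.
\end{equation*}

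Rewriting $\phi(k) = -(\phi^*(0) - \phi^*(k))$ absorbs a sign into the product, so this summand equals $-n!\, e^{-\phi^*(k)t}\big/\prod_{0\leq j \leq n,\, j\neq k}[\phi^*(j) - \phi^*(k)]$. Combined with the principal term $\Gamma(n+1)/W_\phi(n+1) = n!/\prod_{j=1}^n[\phi^*(j) - \phi^*(0)]$, which matches the $k=0$ summand, this yields the second expression in \eqref{intmoments}. The equivalence with the first expression then follows from the classical identity
\begin{equation*}
\sum_{k=0}^{n}\frac{1}{\prod_{0\leq j \leq n,\, j\neq k}[a_j - a_k]} = 0 \qquad (n \geq 1),
\end{equation*}
valid for distinct $a_0,\ldots,a_n$ (obtained e.g.\ by computing the residue at infinity of $1/\prod_{j=0}^n(x - a_j)$), applied to $a_j = \phi^*(j)$.

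As for the regularity condition of Definition \ref{regularitycondition}, which Theorem \ref{mainthm} requires but this corollary does not: since the $z = n$ series truncates to a finite sum, no convergence issue arises, so one may instead apply the unconditional convolution representation of Theorem \ref{convolutionformula} directly, or approximate an arbitrary $\phi \in \Bc$ by $\phi_\epsilon(\lambda) := \phi(\lambda) + \epsilon \lambda$ (which has drift $\epsilon > 0$ and so satisfies the regularity condition), apply Theorem \ref{mainthm}, and let $\epsilon \to 0^+$ using continuity of both sides of \eqref{intmoments} in $\epsilon$. The main obstacle is essentially the bookkeeping in the cancellation step, requiring care to track which factors $\phi(m) - \phi(k)$ appear in the numerator versus the denominator; no deep analytic ingredient is required beyond the truncation and the standard partial-fraction identity.
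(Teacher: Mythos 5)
Your proof is correct, but it takes a genuinely different route from the paper's. The paper does not derive the corollary from Theorem \ref{mainthm} at all: it specialises the Laplace transform identity \eqref{eq:LT} at $z=n$ and uses the recurrence \eqref{eq:Bern-Gamma} to get the explicit rational function $\cal{L}|_q\{\Ebb{I_\phi^n(\cdot)}\}=\tfrac{1}{q}\tfrac{n!}{\prod_{j=1}^n(q+\phi(j))}$, inverts this to the finite convolution $n!\,(1\ast\conv_{k=1}^{n}e^{-\phi(k)\cdot})$, and then proves \eqref{intmoments} by induction on $n$, invoking the Vandermonde identity of Proposition \ref{prop:Van} in the inductive step. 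You instead evaluate \eqref{theoremformula} at $z=n$, note the truncation of the series at $k=n$ (correct: the factor $j=k-n$ vanishes for $k\geq n+1$), telescope the Bernstein-gamma factors via the recurrence, and finish with the same partial-fraction identity $\sum_{k=0}^n\prod_{j\neq k}[a_j-a_k]^{-1}=0$ — which is exactly the paper's Proposition \ref{prop:Van}, so you may cite it rather than re-derive it. The trade-off: your route is shorter granted Theorem \ref{mainthm}, but that theorem carries the regularity hypothesis of Definition \ref{regularitycondition} which the corollary does not; your $\epsilon$-drift perturbation $\phi_\epsilon=\phi+\epsilon\lambda$ with monotone convergence of $I_{\phi_\epsilon}(t)\uparrow I_\phi(t)$ and continuity of the (nondegenerate) right-hand side does close this gap, whereas your first suggested workaround (``apply Theorem \ref{convolutionformula} directly'') is not actually worked out and would essentially reproduce the paper's computation. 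The paper's argument is unconditional from the start and independent of the analytic machinery behind Theorem \ref{mainthm}. Both arguments implicitly require the $\phi^*(j)$, $0\leq j\leq n$, to be distinct, which excludes only the trivial subordinator.
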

\begin{remark}\label{rem:mom}
Note that as $t\to\infty$ the second relation of \eqref{intmoments} yields the well known formula
\begin{equation*}
\begin{split}
&		\Ebb{I_\phi^n(t)}=\frac{n!}{\prod_{j=1}^n\phi(j)}    
\end{split}
\end{equation*}
but it also offers an asymptotic expansion of the speed of convergence in $t$, the first term of which is exponential of value $e^{-\phi(1)t}$.
\end{remark}
We proceed with the proofs of our results.
%
%
%

  \section{Proofs for Bivariate Bernstein-Gamma Functions}      
  \label{sec:ProofsBernGammaBiv}    
  Before providing the proofs for Section \mbox{\ref{bgresults}}, we state some key results on the class of Bernstein functions $\Bc$, 
  which  can be found in \cite[Section 4]{ps19}.
\begin{proposition}\label{propAsymp1}     
	\begin{enumerate}
		\item  For all $\phi\in\Bc$ and $z\in\Cb_{\intervalOI}$, we can express the derivative of $\phi$ as
		\begin{equation}\label{eq:phi'}
		\begin{split}
		\phi'(z)&=\dr +\IntOI ye^{-zy}\Pi(dy)\\
		&=\dr+\IntOI e^{-zy}\bar{\Pi}(y)dy-z\IntOI e^{-zy}y\bar{\Pi}(y)dy.
		\end{split}	
		\end{equation}
		\item \label{it:bernstein_cm} Each $\phi\in\cal{B}$ is  non-decreasing on $[0,\infty)$, and
		$\phi'$ is completely monotone,  positive, and non-increasing on $[0,\infty)$. Hence, $\phi$ is strictly log-concave on $[0,\infty)$, so  for all $u\in [0,\infty)$,
		\begin{equation}\label{eq:phi'_phi}
		0\leq u \phi'(u)=\phi(u)-\phi(0)-u^2\int_{0}^{\infty} e^{-uy}y\bar\Pi(y)dy\leq \phi(u)
		.
		\end{equation}
	\end{enumerate}
\end{proposition}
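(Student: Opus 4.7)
The proof breaks naturally into three mostly routine pieces; I do not anticipate a serious obstacle, only the usual bookkeeping of dominated convergence and boundary terms in integration by parts.

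\textbf{Part 1 (the two representations of $\phi'$).} I would start from the canonical Bernstein representation \eqref{eq:Bern} and justify differentiation under the integral sign on $\Cb_{\intervalOI}$. For $z$ with $\Re(z) \geq \eps > 0$, the integrand $(1-e^{-zy})$ has $z$-derivative $ye^{-zy}$, which is bounded in modulus by $y e^{-\eps y}$; combined with the bound $y \leq 2\min\{y,1\}$ on a bounded interval near $0$ and exponential decay for large $y$, this majorant is $\Pi$-integrable using $\int_0^\infty \min\{y,1\}\Pi(dy)<\infty$. Dominated convergence then yields $\phi'(z)=\dr+\int_0^\infty y e^{-zy}\Pi(dy)$. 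For the second form I would integrate by parts with $u(y)=ye^{-zy}$ and $dv=\Pi(dy)=-d\ov(y)$; the boundary term at $0$ vanishes because $y\ov(y)\to 0$ as $y\to 0$ (a consequence of integrability of $\min\{y,1\}$ against $\Pi$), and the boundary term at $\infty$ vanishes because $\ov$ is bounded and the exponential decay dominates. Computing $(ye^{-zy})'=e^{-zy}-zye^{-zy}$ gives the second displayed formula for $\phi'$.

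\textbf{Part 2 (monotonicity, complete monotonicity, log-concavity).} Non-decrease of $\phi$ on $[0,\infty)$ is immediate from \eqref{eq:Bern} since $z\mapsto 1-e^{-zy}$ is non-decreasing for $y>0$, or equivalently from $\phi'(u)\geq \dr\geq 0$ on $[0,\infty)$. For complete monotonicity of $\phi'$, iterate the differentiation-under-the-integral argument: for each $n\geq 1$ and $u>0$,
\begin{equation*}
(-1)^{n-1}\phi^{(n)}(u)=\int_{0}^{\infty}y^{n}e^{-uy}\Pi(dy)\geq 0,
\end{equation*}
where the bounds needed to apply dominated convergence on $u\geq \eps>0$ are the finite moments $\int_0^\infty y^n e^{-\eps y}\Pi(dy)<\infty$ (finite by the $\min\{y,1\}$ integrability of $\Pi$ together with exponential decay for $y\geq 1$). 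Positivity of $\phi'$ on $[0,\infty)$ follows by inspection from the representation, and monotone non-increase is just $\phi''\leq 0$. For strict log-concavity, compute
\begin{equation*}
(\log\phi)''(u)=\frac{\phi''(u)\phi(u)-\phi'(u)^2}{\phi(u)^2},
\end{equation*}
which is $\leq 0$ since $\phi''\leq 0$, $\phi\geq 0$, and $\phi'^2\geq 0$; strictness in non-trivial cases follows because $\phi'(u)>0$ as soon as $\dr>0$ or $\Pi\not\equiv 0$.

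\textbf{Part 3 (the identity and its bounds).} Using the alternative Bernstein representation \eqref{eq:Bern'}, one has $\phi(u)-\phi(0)=\dr u+u\int_0^\infty e^{-uy}\ov(y)dy$. Multiplying the second form of $\phi'(u)$ from Part 1 by $u$ gives
\begin{equation*}
u\phi'(u)=\dr u+u\int_{0}^{\infty}e^{-uy}\ov(y)dy-u^{2}\int_{0}^{\infty}e^{-uy}y\ov(y)dy,
\end{equation*}
and substituting the expression for $\phi(u)-\phi(0)$ produces the identity in \eqref{eq:phi'_phi}. The lower bound $u\phi'(u)\geq 0$ is immediate from $\phi'\geq 0$, and the upper bound $u\phi'(u)\leq \phi(u)$ follows because $\phi(0)\geq 0$ and $u^{2}\int_0^\infty e^{-uy}y\ov(y)dy\geq 0$. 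The only subtle point here is that $\int_0^\infty y\ov(y)dy$ need not be finite, but the presence of $e^{-uy}$ with $u>0$ ensures integrability via the same $\min\{y,1\}$ bound on $\ov$ near $0$ together with the exponential decay; the case $u=0$ is handled by continuity from the right using $\phi\in\Ac_{\lbbrb{0,\infty}}$.
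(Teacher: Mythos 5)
Your proof is correct and follows the standard route (differentiation under the integral with the $\min\{y,1\}$ majorant, integration by parts against $\Pi(dy)=-d\ov(y)$ with the boundary term $y\ov(y)\to 0$, and the algebraic identity from \eqref{eq:Bern'}); the paper itself gives no proof of this proposition but defers to \cite[Section 4]{ps19}, where essentially these computations appear. The only cosmetic caveats are that your displayed formula for $(-1)^{n-1}\phi^{(n)}$ should carry the $\dr$ term when $n=1$ (which you did handle in Part 1), and that positivity of $\phi'$ and strict log-concavity are to be read modulo the degenerate case $\dr=0$, $\Pi\equiv 0$ — an edge case the paper's statement also glosses over.
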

We are ready to start with the proof of the main results.
\begin{proof}[Proof of Theorem \mbox{\ref{thm:BernGammaExt}}]
	Since for $\zeta=q\geq 0$ we have that $\Wkap\lbrb{q,z}=\Wpq(z)$ with $\gamma_\kappa(q)$ computed as in \eqref{eq:gamphz}, see Remark \mbox{\ref{rem:BivBernGam}}, then 
	\begin{equation}\label{eq:Bern-GammaProd2}
	\Wkap\lbrb{q,z}=\frac{e^{-\gampkap(q) z}}{\kappa(q,z)}\prod_{k=1}^{\infty}\frac{\kappa(q,k)}{\kappa(q,k+z)}e^{\frac{\kappa'_z(q,k)}{\kappa(q,k)}z}=:\overline{\Wkap}\lbrb{q,z},\,z\in\CbOI.
	\end{equation}
	We  extend analytically $\overline{\Wkap}\lbrb{\cdot,z}$ to $\Cb_{\lbbrb{0,\infty}}$. First, we extend  $\gampkap(\cdot)$. Fix $\zeta\in\Cb_{\intervalOI}$ and an open ball $B_\zeta$ centred at $\zeta$ with the closed ball $\overline{B_\zeta}$ satisfying $\overline{B_\zeta}\subset\Cb_{\intervalOI}$. From item\,\mbox{\ref{it:limit}} of Proposition \mbox{\ref{prop:kappa}}, that is $\Re(\kappa(\zeta, k))>0, k\geq 1$, and $\kappa\in \Ac^2_{\intervalOI}$, see Lemma \mbox{\ref{lem:repKappa1}}, we deduce that for any $n\in\Nbp:=\curly{1,2,\cdots}$ 
	\begin{equation}\label{eq:fn}
		f_n(\zeta):=\sum_{k=1}^n\frac{\kappa'_z\lbrb{\zeta,k}}{\kappa(\zeta,k)}-\log_0\lbrb{\kappa\lbrb{\zeta,n}}\in\Ac_{\intervalOI}.
	\end{equation}
	Moreover, we observe that
	\begin{equation}\label{eq:f_nEst}
	\begin{split}
	\abs{f_n(\zeta)}&=\abs{\sum_{k=1}^n\frac{\kappa'_z\lbrb{\zeta,k}}{\kappa(\zeta,k)}-\int_{1}^{n}\frac{\kappa'_z\lbrb{\zeta,x}}{\kappa(\zeta,x)}dx-\log_0\lbrb{\kappa\lbrb{\zeta,1}}}\\
	&=\abs{\sum_{k=1}^n\int_{0}^{1}\lbrb{\frac{\kappa'_z\lbrb{\zeta,k}}{\kappa(\zeta,k)}-\frac{\kappa'_z\lbrb{\zeta,x+k}}{\kappa(\zeta,x+k)}}dx-\log_0\lbrb{\kappa\lbrb{\zeta,1}}}\\
	&\leq \sum_{k=1}^n\sup_{y\in\lbbrbb{0,1}}\lbrb{\abs{\frac{\kappa''_z\lbrb{\zeta,y+k}}{\kappa(\zeta,y+k)}}+\abs{\frac{\kappa'_z\lbrb{\zeta,y+k}}{\kappa(\zeta,y+k)}}^2}+\abs{\log_0\lbrb{\kappa\lbrb{\zeta,1}}}\\
	&\leq \sum_{k=1}^n\frac{8}{k^2}+\abs{\log_0\lbrb{\kappa\lbrb{\zeta,1}}}\\
	&<\frac{8\pi^2}{6}+\abs{\log_0\lbrb{\kappa\lbrb{\zeta,1}}},
	\end{split}
	\end{equation}
	where in the first inequality we have used first order Taylor's expansion of $\frac{\kappa'_z\lbrb{\zeta,x+k}}{\kappa(\zeta,x+k)}$ about $x=0$ and immediate bounds, in the second we have applied \eqref{eq:kappa'_kappa} and  \eqref{eq:kappa''_kappa}, and  the third follows from the well-known $\sum_{k=1}^\infty k^{-2}=\pi^2/6$.
	Henceforth, 
	\begin{equation*}
	\begin{split}
		&\sup_{n\geq 1} \sup_{\chi\in\overline{B_\zeta}}\abs{f_n\lbrb{\chi}}<\frac{4\pi^2}{3}+\sup_{\chi\in\overline{B_\zeta}}\abs{\log_0\lbrb{\kappa\lbrb{\chi,1}}}<\infty.
	\end{split}
	\end{equation*}	
	Therefore, $f_n$ are uniformly bounded holomorphic functions on $\overline{B_\zeta}$ and from the dominated convergence theorem which is applicable thanks to \eqref{eq:f_nEst} we deduce that
	\[\limi{n}f_n\lbrb{\chi}=\sum_{k=1}^{\infty}\int_{0}^{1}\lbrb{\frac{\kappa'_z\lbrb{\chi,k}}{\kappa(\chi,k)}-\frac{\kappa'_z\lbrb{\chi,x+k}}{\kappa(\chi,x+k)}}dx-\log_0\lbrb{\kappa\lbrb{\chi,1}}:=f(\chi).\]
	From classical result of complex analysis we conclude that $f\in\Ac_{B_\zeta}$ and since $\zeta\in\Cb_{\intervalOI}$ we get that $f\in\Ac_{\intervalOI}$. Since by the very definition of $\gamma_\kappa$, see \eqref{eq:gamphz}, $\gamma_\kappa=\limi{n}f_n=f$ we conclude that $f=\gamma_\kappa\in\Ac_{\intervalOI}$. The claim that $\gamma_\kappa\in\Ac_{\lbbrb{0,\infty}}$ is then affirmed by the fact that $\kappa\lbrb{\zeta,k}, \kappa'_z\lbrb{\zeta,x+k},\,k\geq1, x\in\lbbrbb{0,1},$ extend continuously to $\zeta\in i\R$ and that the uniform bound \eqref{eq:f_nEst} is valid for $\zeta \in i\Rb$.  In view of $\gamma_\kappa\in\Ac_{\lbbrb{0,\infty}}$ to analytically extend in $\zeta$ the right-hand side of  the infinite product in \eqref{eq:Bern-GammaExt}, it suffices from Hartog's theorem, see \cite[Section 2.4]{k01}, to fix $z\in\Cb_{\intervalOI}$ and show using Montel's theorem that the infinite product, which we record in \eqref{eq:InfProduct} below, converges absolutely
	\begin{equation}\label{eq:InfProduct}
	Z_\kappa(\zeta,z):=\prod_{k=1}^{\infty}\frac{\kappa\lbrb{\zeta,k}}{\kappa\lbrb{\zeta,k+z}}e^{\frac{\kappa'_z(\zeta,k)}{\kappa\lbrb{\zeta,k}}z}=\prod_{k=1}^{\infty}A_k\lbrb{\zeta}.
	\end{equation}
	For any $k\geq 1$, we get using the Taylor's expansion
	\begin{equation}\label{eq:TaylorRatio}
	\log_0\lbrb{\frac{\kappa\lbrb{\zeta,k+z}}{\kappa\lbrb{\zeta,k}}}=\log_0\lbrb{1+z\frac{\kappa'_z\lbrb{\zeta,k}}{\kappa\lbrb{\zeta,k}}+\frac{z^2}2\frac{\kappa''_z\lbrb{\zeta,k+a_kz}}{\kappa\lbrb{\zeta,k}}},
	\end{equation}
	where $a_k\in\lbbrbb{0,1}$. Thus,
	\begin{equation}\label{eq:Ak}
		A_k\lbrb{\zeta}=e^{z\frac{\kappa'_z(\zeta,k)}{\kappa\lbrb{\zeta,k}}-\log_0\lbrb{1+z\frac{\kappa'_z\lbrb{\zeta,k}}{\kappa\lbrb{\zeta,k}}+\frac{z^2}2\frac{\kappa''_z\lbrb{\zeta,k+a_kz}}{\kappa\lbrb{\zeta,k}}}}.
	\end{equation}
	From \eqref{eq:kappa'_kappa} we have that on any ball $\overline{B_\zeta}\subset \CbOI$ centred at $\zeta$ and any $k\geq 1$
	\begin{equation}\label{eq:Ak1}
		\sup_{\chi\in \overline{B_\zeta}}\abs{z\frac{\kappa'_z\lbrb{\chi,k}}{\kappa\lbrb{\chi,k}}}\leq 2\frac{|z|}{k}.
	\end{equation}
	From \eqref{eq:kappa''}, \eqref{eq:ratioBounds}, \eqref{eq:kappa''_kappa} and the fact that $\abs{\phReta''}=-\phReta''$ decreases on $[0,\infty)$ we also observe that for any $k\geq 1$
	\begin{equation}\label{eq:Ak2}
	\begin{split}
	&\sup_{\chi\in \overline{B_\zeta}}\sup_{v\in\lbbrbb{0,1}}\abs{z\frac{\kappa''_z\lbrb{\chi,k+vz}}{\kappa\lbrb{\chi,k}}}\leq \sup_{\chi\in \overline{B_\zeta}}\sup_{v\in\lbbrbb{0,1}}\abs{z\frac{\kappa''_z\lbrb{\Re(\chi),k+v\Re(z)}}{\kappa\lbrb{\chi,k}}}\\
	&\leq \sup_{\chi\in \overline{B_\zeta}}\abs{z\frac{\kappa''_z\lbrb{\Re(\chi),k}}{\kappa\lbrb{\chi,k}}}\leq\sup_{\chi\in \overline{B_\zeta}}\abs{z\frac{\kappa''_z\lbrb{\Re\lbrb{\chi},k}}{\kappa\lbrb{\Re\lbrb{\chi},k}}}\\
	& \leq 4\frac{|z|}{k^2}.
	\end{split}
	\end{equation}
	Therefore, using $\log_0\lbrb{1+w}= w+\bo{w^2}$, as $|w|\sim 0$, we get for all $k$ large enough
		\begin{equation*}
		\begin{split}
	\sup_{\chi\in \overline{B_\zeta}}\abs{A_k\lbrb{\chi}}\leq  \sup_{\chi\in \overline{B_\zeta}}\abs{e^{\frac12\lbrb{z\frac{\kappa'_z\lbrb{\chi,k}}{\kappa\lbrb{\chi,k}}+\frac{z^2}2\frac{\kappa''_z\lbrb{\zeta,k+a_kz}}{\kappa\lbrb{\zeta,k}}}^2-\frac{z^2}2\frac{\kappa''_z\lbrb{\zeta,k+a_kz}}{\kappa\lbrb{\zeta,k}}}}\leq e^{\frac{16}{k^2}\lbrb{|z|^2+|z|^4+\bo{\frac1k}}}
		\end{split}
	\end{equation*}
	and hence from \eqref{eq:InfProduct}
		\begin{equation*}
		\sup_{\chi\in \overline{B_\zeta}}\abs{Z_\kappa(\chi,z)}=\sup_{\chi\in \overline{B_\zeta}}\prod_{k=1}^\infty\abs{A_k\lbrb{\chi}}<\infty.
	\end{equation*}
	This shows that $\overline{\Wkap}\in\Ac^2_{\intervalOI}$ and it equates the infinite product in \eqref{eq:Bern-GammaExt}. To deduce that $\Wkap=\overline{\Wkap}$ and therefore $\Wkap$ is represented as in \eqref{eq:Bern-GammaExt} we proceed to demonstrate first that \eqref{eq:Bern-GammaExtLim} holds. For this it suffices to use that the infinite product in \eqref{eq:Bern-GammaExt} is absolutely convergent and the limit that deifines $\gampkap$, see \eqref{eq:gamphz}. Indeed, we simply write
	\begin{equation*}
	\begin{split}
	\overline{\Wkap}\lbrb{\zeta,z}&=\frac{e^{-\gampkap(\zeta) z}}{\kappa\lbrb{\zeta,z}}\limi{n}\prod_{k=1}^{n}\frac{\kappa\lbrb{\zeta,k}}{\kappa\lbrb{\zeta,k+z}}e^{\frac{\kappa'_z(\zeta,k)}{\kappa\lbrb{\zeta,k}}z}\\
	&=\frac{1}{\kappa\lbrb{\zeta,z}}\limi{n}e^{z\log_0\kappa\lbrb{\zeta,n}}e^{z\sum_{k=1}^n\frac{\kappa'_z(\zeta,k)}{\kappa\lbrb{\zeta,k}}-\gampkap(\zeta) z-\log_0\kappa\lbrb{\zeta,n}}\prod_{k=1}^{n}\frac{\kappa\lbrb{\zeta,k}}{\kappa\lbrb{\zeta,k+z}}\\
	&=\frac{1}{\kappa\lbrb{\zeta,z}}\limi{n}e^{z\log_0\kappa\lbrb{\zeta,n}}\prod_{k=1}^{n}\frac{\kappa\lbrb{\zeta,k}}{\kappa\lbrb{\zeta,k+z}}.
	\end{split}
	\end{equation*}
	Then
	\begin{equation*}
	\begin{split}
	\overline{\Wkap}\lbrb{\zeta,z+1}&=\limi{n}e^{z\log_0\kappa\lbrb{\zeta,n}}\frac{\kappa\lbrb{\zeta,n}}{\kappa\lbrb{\zeta,n+1}}\prod_{k=1}^{n+1}\frac{\kappa\lbrb{\zeta,k}}{\kappa\lbrb{\zeta,k+z}}\\
	&=\kappa\lbrb{\zeta,z}\overline{\Wkap}\lbrb{\zeta,z}
	\end{split}
	\end{equation*}
	provided $\limi{n}\frac{\kappa\lbrb{\zeta,n+1}}{\kappa\lbrb{\zeta,n}}=1$. This is an elementary consequence of \[\kappa\lbrb{\zeta,n+1}=\kappa\lbrb{\zeta,n}+\int_{n}^{n+1}\kappa'_z\lbrb{\zeta,x}dx,\] inequality \mbox{\eqref{eq:kappa'_kappa_1}} and item \mbox{\ref{it:limi}} of Lemma \mbox{\ref{prop:kappa}}. Clearly, $\overline{\Wkap}\lbrb{\zeta,1}=1$.  Now, $\overline{\Wkap}$ satisfies all conditions of Definition \ref{def:BernGammaBiv} and therefore as it coincides with $\Wkap$ for $\zeta=q\in\lbbrb{0,\infty}$ we deduce they coincide on $\Cb_{\lbbrb{0,\infty}}\times\Cb_{\intervalOI}$. The uniqueness of $\Wkap$ follows from the fact that the positive random variable associated with the Mellin transform $\Wkap(q,z)$ is unique, for each $q\geq 0$, see e.g. \cite[Section 4]{ps16} and the discussion in \cite[Section 6]{ps19}. This concludes the proof of the theorem.	 
\end{proof}
Next, we prove the Stirling asymptotic representation for a bivariate Bernstein-gamma function.
\begin{proof}[Proof of Theorem \ref{thm:Stirling}]
	Let us denote $f(x):=\log_0\lbrb{\kappa\lbrb{\zeta,x+z}/\kappa\lbrb{\zeta,x}}$ and
	$S_n:=\sum_{k=1}^n f(k)$. Then applying $\l(\ref{eq:Bern-GammaExtLim}\r)$,  for each $\lbrb{\zeta,z}\in \Cb_{\lbbrb{0,\infty}}\times \Cb_{\lbrb{0,\infty}}$, we can express $\Wkap(\zeta,z)$ as
	\begin{align}
	\Wkap(\zeta,z)
	&=
	\frac{1}{\kappa\lbrb{\zeta,z}}\limi{n}e^{z\log_0\kappa\lbrb{\zeta,n}}\prod_{k=1}^{n}\frac{\kappa\lbrb{\zeta,k}}{\kappa\lbrb{\zeta,k+z}}
	\\
	&=
	\frac{1}{\kappa\lbrb{\zeta,z}}\limi{n}e^{z\log_0\kappa\lbrb{\zeta,n}-\sum_{k=1}^{n}\log_0\lbrb{\frac{\kappa\lbrb{\zeta,k+z}}{\kappa\lbrb{\zeta,k}}}}
	\\
		\label{eq:Bern-GammaExtLim_2}
	&=
	\frac{1}{\kappa\lbrb{\zeta,z}}\limi{n}e^{z\log_0\kappa\lbrb{\zeta,n}-S_n}.
	\end{align}
	Then applying \cite[Section 8.2, (2.01),(2.03)]{o97} with $m=1$ in their notation, we can write $S_n$ as
	\begin{equation}\label{eq:SnIn}
	\begin{split}
		S_n=\sum_{k=1}^n f(k)=\frac{1}{2}\lbrb{f(1)+f(n)}+\int_{1}^n f(x)dx+\frac{1}{2}\int_{1}^n\Prm(u)f''(u)du,
	\end{split}
	\end{equation}
	where recalling that $\Prm(u)=\lbrb{u-\lfloor u \rfloor}\lbrb{1-\lbrb{u-\lfloor u \rfloor}}$, we set
	\begin{equation}\label{eq:EkN}
	\Ekap\lbrb{\zeta,z,n}=\frac{1}{2}\int_{1}^n\Prm(u)f''(u)du=\frac{1}{2}\int_{1}^n\Prm(u)\lbrb{\log_0 \frac{\kappa(\zeta,u+z)}{\kappa\lbrb{\zeta,u}}}''du.
	\end{equation}
	Let us first show that 
	\begin{equation}\label{eq:EknEk}
		\limi{n}\Ekap\lbrb{\zeta,z,n}=\Ekap\lbrb{\zeta,z}
	\end{equation}
	and estimate uniformly $\abs{\Ekap}$. For this purpose we estimate using  \eqref{eq:kappa'_kappa_1} and \eqref{eq:kappa''_kappa_1} that
	\begin{equation}\label{eq:boundLog''}
	\begin{split}
	\abs{\lbrb{\log_0 \frac{\kappa(\zeta,u+z)}{\kappa\lbrb{\zeta,u}}}''}&\leq \abs{\frac{\kappa_z''(\zeta,u+z)}{\kappa(\zeta,u+z)}}+\abs{\frac{\kappa_z'(\zeta,u+z)}{\kappa(\zeta,u+z)}}^2+\abs{\frac{\kappa_z'(\zeta,u)}{\kappa(\zeta,u)}}^2+\abs{\frac{\kappa_z''(\zeta,u)}{\kappa(\zeta,u)}}\\
	&\leq \frac{8}{\lbrb{u+\Re(z)}^2}+\frac{8}{u^2}\leq \frac{16}{u^2},
	\end{split}
	\end{equation}
	where in the last inequality we have invoked the fact that $\Rez\geq 0$. Therefore with the help of the last inequality and 
	\begin{equation}\label{eq:Prm}
	\sup_{u\geq 1}\Prm(u)=\sup_{u\geq 1}\lbrb{u-\lfloor u\rfloor}\lbrb{1-\lbrb{u-\lfloor u\rfloor}}=\frac{1}{4}
	\end{equation}
	 we obtain that
	\begin{equation*}
	\begin{split}
		\sup_{\kappa\in\Bc^2}\sup_{\lbrb{\zeta,z}\in\Cb_{\lbbrb{0,\infty}}\times\Cb_{\intervalOI}}\sup_{n\geq 1}\abs{\Ekap\lbrb{\zeta,z,n}}\leq \frac{1}{8}\sup_{n\geq 1}\int_{1}^{n}\frac{16}{u^2}du\leq 2
	\end{split}
	\end{equation*}
	and hence \eqref{eq:EknEk}. The bound \eqref{eq:Ek} also follows from \eqref{eq:boundLog''}. The limit \eqref{eq:limEk} is easily deduced from  \eqref{eq:boundLog''} wherein the first two terms in the upper bound vanish as $\Rez\to\infty$. Let us show that for any fixed $z$, $\limi{\Re(\zeta)}\Ekap\lbrb{\Re(\zeta),z}=0$. From \eqref{eq:kappa} we easily get that for any $z\in\Cb_{\intervalOI}$
	\begin{equation}\label{eq:kinfz}
		\limi{\Re(\zeta)}\kappa\lbrb{\Re(\zeta),z}=\kappa(0,0)+d_2z+\infty\ind{d_1>0}+\mu\lbrb{\intervalOI,\intervalOI}=\kappa(\infty,z),
	\end{equation}
	whereas from \eqref{eq:kappa'} we get since $\limi{\Re(\zeta)}\mu_{\Re(\zeta)}(dy)\stackrel{w}{=}0dy$ and 
	\[\sup_{\Re(\zeta)\geq 1}\abs{\IntOI y\mu_{\zeta}(dy)}\leq \IntOI y\mu_1(dy)<\infty\]
	that for any $z\in\Cb_{\intervalOI}$
	\begin{equation}\label{eq:k'infz}
	\limi{\Re(\zeta)}\kappa'_z\lbrb{\zeta,z}=d_2.
	\end{equation}
	By the same reasoning $\limi{\Re(\zeta)}\kappa''_z(\Re(\zeta),z)=0$. This together with \eqref{eq:kinfz}, \eqref{eq:k'infz} and the dominated convergence theorem (applicable due to \eqref{eq:boundLog''}) yield that
	\begin{equation*}
	\begin{split}
	&\limi{\Re(\zeta)}\Ekap\lbrb{\zeta,z}\\
	&=\frac{1}{2}\limi{\Re(\zeta)}\int_{1}^{\infty}\Prm(u)\lbrb{{\frac{\kappa_z''(\Re(\zeta),u+z)}{\kappa(\Re(\zeta),u+z)}}-\frac{\lbrb{\kappa_z'(\Re(\zeta),u+z)}^2}{\kappa^2(\Re(\zeta),u+z)}-\frac{\kappa_z''(\Re(\zeta),u)}{\kappa(\Re(\zeta),u)}+\frac{\lbrb{\kappa_z'(\Re(\zeta),u)}^2}{\kappa^2(\Re(\zeta),u)}}du\\
	&=\frac{1}{2}\int_{1}^{\infty}\Prm(u)\limi{\Re(\zeta)}\lbrb{{-\frac{\lbrb{\kappa_z'(\zeta,u+z)}^2}{\kappa^2(\zeta,u+z)}+\frac{\lbrb{\kappa_z'(\zeta,u)}^2}{\kappa^2(\zeta,u)}}}du\\
	&=\frac{1}{2}\int_{1}^{\infty}\Prm(u)\lbrb{{-\frac{d_2^2}{\kappa^2(\infty,u+z)}+\frac{d_2^2}{\kappa^2(\infty,u)}}}du.
	\end{split}	
	\end{equation*}
	Therefore, since  $\abs{\kappa(\infty,z)}=\infty$ whenever $d_2>0$
	it follows that $\limi{\Re(\zeta)}\Ekap\lbrb{\Re(\zeta),z}=0$.
	 Next, we investigate the term $f\lbrb{n}$ in \eqref{eq:SnIn}. We write
	\begin{equation}\label{eq:fn_1}
	\abs{f(n)}=\abs{\log_0\lbrb{1+\frac{\kappa\lbrb{\zeta,n+z}-\kappa\lbrb{\zeta,n}}{\kappa\lbrb{\zeta,n}}}}.
	\end{equation}
	Clearly, from \eqref{eq:kappa''}, Proposition \mbox{\ref{propAsymp1}} \mbox{\ref{it:bernstein_cm}} and $\Rez\geq 0$
	\begin{equation*}
		\begin{split}
		\abs{\kappa\lbrb{\zeta,n+z}-\kappa\lbrb{\zeta,n}}&\leq|z|\sup_{v\in\lbbrbb{0,1}}\abs{\kappa'_z\lbrb{\zeta,n+vz}}\leq |z|\sup_{v\in\lbbrbb{0,1}}\kappa_z'\lbrb{\Reta,n+v\Rez}\\
		&=|z|\sup_{v\in\lbbrbb{0,1}}\phReta'\lbrb{n+v\Rez}\leq |z|\phReta'(n).
		\end{split}
	\end{equation*}
	Thus, from \eqref{eq:ratioBounds} and \eqref{eq:kappa'_kappa} we arrive at
	\[\abs{\frac{\kappa\lbrb{\zeta,n+z}-\kappa\lbrb{\zeta,n}}{\kappa\lbrb{\zeta,n}}}\leq |z|\frac{\phReta'(n)}{\phReta(n)}\leq \frac{|z|}n.\]
	Thus, if $|z|=\so{n}$
\begin{equation}\label{eq:fnB}
\abs{f(n)}=\abs{\log_0\lbrb{1+\frac{\kappa\lbrb{\zeta,n+z}-\kappa\lbrb{\zeta,n}}{\kappa\lbrb{\zeta,n}}}}\leq \frac{2|z|}{n}=\so{1}
\end{equation}
	and,
	 as $n\to\infty, z=\so{n}$, we get that
	\begin{equation}\label{eq:fn0}
	\begin{split}
	\limi{n}\abs{f(n)}=\limi{n}\abs{\log_0\lbrb{1+\frac{\kappa\lbrb{\zeta,n+z}-\kappa\lbrb{\zeta,n}}{\kappa\lbrb{\zeta,n}}}}=\limi{n}\abs{\frac{\kappa\lbrb{\zeta,n+z}-\kappa\lbrb{\zeta,n}}{\kappa\lbrb{\zeta,n}}}=0.
	\end{split}
	\end{equation} 
	Henceforth we obtain from \eqref{eq:SnIn} that
	\begin{equation}\label{eq:SnIn_1}
	\begin{split}
	\limi{n}\lbrb{S_n-\int_{1}^{n}f(x)dx}&=\frac{1}{2}f(1)+\limi{n}\lbrb{\Ekap(\zeta,z,n)+\frac12f(n)}\\
	&=\frac{1}{2}f(1)+\Ekap(\zeta,z).
	\end{split}	
	\end{equation}
	Next, we estimate $I_n=\int_{1}^{n}f(x)dx$, as $n\to\infty$. From \eqref{eq:limSup} and since $\lbrb{\zeta,z,x}\in\Cb_{\lbbrb{0,\infty}}\times\CbOI\times\lbbrb{1,\infty}$ both $\Re\lbrb{\kappa\lbrb{\zeta,x}}>0,\Re\lbrb{\kappa\lbrb{\zeta,z+x}}>0$.  Therefore, $\arg\lbrb{\frac{\kappa\lbrb{\zeta,x+z}}{\kappa\lbrb{\zeta,x}}}\in \lbrb{-\pi,\pi}$ and thus 
	\begin{equation}\label{eq:log0}
	f(x)=\log_0\lbrb{\frac{\kappa\lbrb{\zeta,x+z}}{\kappa\lbrb{\zeta,x}}}=\log_0\lbrb{\kappa\lbrb{\zeta,x+z}}-\log_0\lbrb{\kappa\lbrb{\zeta,x}}.
	\end{equation}
	Thus, fixing the parallelogram in $\Cb$ with vertices $1,1+z,z+n,n$, we see from the Cauchy  integral theorem applied to the function $\log_0\lbrb{\kappa\lbrb{\zeta,\cdot}}$ which is holomorphic in an open neighbourhood of the parallelogram that 
	\begin{equation}\label{eq:InCauchy}
	\begin{split}
	I_n&=\int_{1}^{n}\lbrb{\log_0\lbrb{\kappa\lbrb{\zeta,x+z}}-\log_0\lbrb{\kappa\lbrb{\zeta,x}}}dx\\
	&=\int_{n\rightarrow n+z} \log_0\lbrb{\kappa\lbrb{\zeta,\chi}}d\chi-\int_{1\rightarrow 1+z} \log_0\lbrb{\kappa\lbrb{\zeta,\chi}}d\chi.
	\end{split}
		\end{equation}
		From the latter and \eqref{eq:SnIn_1} we get that
		\begin{equation}\label{eq:Sn_1}
		\begin{split}
	S_n&=\int_{n\rightarrow n+z} \log_0\lbrb{\kappa\lbrb{\zeta,\chi}}d\chi-\int_{1\rightarrow 1+z} \log_0\lbrb{\kappa\lbrb{\zeta,\chi}}d\chi\\
	&+\frac{1}{2}\log_0\lbrb{\frac{\kappa\lbrb{\zeta,1+z}}{\kappa\lbrb{\zeta,1}}}+\Ekap(\zeta,z)+\so{1}
		\end{split}	
		\end{equation}
		and hence \eqref{eq:Bern-GammaExtLim_2} can be re-expressed as follows
			\begin{equation}\label{eq:Bern-GammaExtLim_3}
		\begin{split}
		\Wkap(\zeta,z)&=\frac{1}{\kappa\lbrb{\zeta,z}}\limi{n}e^{z\log_0\kappa\lbrb{\zeta,n}-S_n}\\
		&=\frac{1}{\kappa\lbrb{\zeta,z}}\limi{n}\Big(e^{z\log_0\kappa\lbrb{\zeta,n}-\int_{n\rightarrow n+z} \log_0\lbrb{\kappa\lbrb{\zeta,\chi}}d\chi+\int_{1\rightarrow 1+z} \log_0\lbrb{\kappa\lbrb{\zeta,\chi}}d\chi}\\
		&\times e^{-\frac{1}{2}\log_0\lbrb{\frac{\kappa\lbrb{\zeta,1+z}}{\kappa\lbrb{\zeta,1}}}-\Ekap\lbrb{\zeta,z}}\Big)
		\end{split}
		\end{equation}
		However, as \eqref{eq:log0} holds true we get using \eqref{eq:fnB} when $|z|=\so{n}$ that
		\begin{equation*}
		\begin{split}
		&\abs{z\log_0\kappa\lbrb{\zeta,n}-\int_{n\rightarrow n+z} \log_0\lbrb{\kappa\lbrb{\zeta,\chi}}d\chi}=|z|\abs{\int_{0}^1 \log_0\lbrb{\frac{\kappa\lbrb{\zeta,n+vz}}{\kappa(\zeta,n)}}dv}\leq 2\frac{|z|^2}{n}
		\end{split}
		\end{equation*}
		and hence
		\begin{equation}\label{eq:bound1}
		\begin{split}
		&\limi{n}\abs{e^{z\log_0\kappa\lbrb{\zeta,n}-\int_{n\rightarrow n+z} \log_0\lbrb{\kappa\lbrb{\zeta,\chi}}d\chi}}=1.
		\end{split}
		\end{equation}
		Finally, combining together the results \eqref{eq:EknEk}, \eqref{eq:fnB}, \eqref{eq:Bern-GammaExtLim_3} and \eqref{eq:bound1}, we deduce, as required for \eqref{eq:Stirling}, that
		\[\Wkap(\zeta,z)=\frac{1}{\kappa\lbrb{\zeta,z}}e^{\int_{1\rightarrow 1+z} \log_0\lbrb{\kappa\lbrb{\zeta,\chi}}d\chi-\frac{1}{2}\log_0\lbrb{\frac{\kappa\lbrb{\zeta,1+z}}{\kappa\lbrb{\zeta,1}}}-\Ekap(\zeta,z)}.
		\]
		This completes the proof of the theorem.
\end{proof}

  \section{Proofs for Exponential Functionals of \LL Processes}    \label{expproofs}
  Our  starting point in determining the Mellin transform of the exponential functional up to a finite time  is the following equation, proven in  \cite[Proposition 6.1.2]{ps19}. For all $\phi\in\cal{B}$ and  $z\in\bb{C}_{(-1,\infty)}$,
  \b{equation}
  \label{bgreln}
    \bb{E}\big[    I_{\phi}^z(\infty)          \big] =   \f{\Gamma(z+1)      }{   W_{ \phi}(z+1)      } .
  \e{equation}
  
This allows us to understand $I_\phi(t)$, $t<\infty$, in terms of the exponential functionals  $I_{\varphi_q}(\infty)$ of the subordinators with \LLK exponent $\varphi_q(w):=\phi(w)+q$, for $q\geq 0$,  
through the following  argument:
 %
 %
  \begin{align}\label{eq:LT} 
       \nonumber 
            \f{\Gamma(z+1)      }{   W_{ \varphi_q}(z+1)      }
            =
             \bb{E}\big[    I_{\varphi_q}^z(\infty)          \big]   \vphantom{\Bigg(}
   =
     \bb{E}\big[     I_{\phi}^z(e_q)        \big]  
    &=   
        \int_0^\infty           \bb{E}\big[  \hspace{0.5pt}  I_{\phi}^z(t)   \hspace{0.5pt}     \big]  \  \pp( e_q \in dt )     
 %
        \\ 
        & =  
             \int_0^\infty          \bb{E}\big[   \hspace{0.5pt} I_{\phi}^z(t)       \hspace{0.5pt}     \big]     q e^{-qt}   dt     
       =   q   \   \cal{L}\big|_q \l\{   \hspace{0.5pt}  \bb{E}\big[   \hspace{0.5pt}   I_{\phi}^z(\cdot)    \hspace{0.5pt}    \big]   \hspace{0.5pt}    \r\}.     
 \end{align}
 The formula in Theorem \mbox{\ref{convolutionformula}} comes from inverting this Laplace transform. First, we will express the Bernstein-gamma function $W_{ \varphi_q}(z+1)$ as a   product of simple Laplace transforms in the variable $q$, which  facilitates our proof of Theorem \mbox{\ref{convolutionformula}}.

 \begin{lemma} \label{productformula}   For each  subordinator with Laplace exponent $\varphi_q$, for all $z\in\bb{C}_{(-1,\infty)}$,
 \begin{equation}
 \label{prodformula}  
      \cal{L}|_q   \{  \bb{E}[  I_{\phi}^z(\cdot) ]   \}   =      \f{\Gamma(z+1)}{q \hspace{0.5pt} W_{ \varphi_q}(z+1)} 	         =   \f{\Gamma(z+1)}{ q\hspace{0.5pt} \varphi^z_q(1)} 	   \prod_{k=1}^\infty  \l[ \f{ \varphi_q(z+k)}{ \varphi_q(k)}  \l(    1 +   \f{    \varphi_q(k+1)  -   \varphi_q(k)  }{  \varphi_q(k)  }   \r)^{\hspace{-3pt}-z}    \r]   .
 \end{equation} 
\end{lemma}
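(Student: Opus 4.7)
The first equality in \eqref{prodformula} is immediate from combining \eqref{eq:LT} with \eqref{bgreln} applied to the Bernstein function $\varphi_q$. The task is therefore to establish the second equality, which I would derive as a telescoping identity made rigorous by the asymptotic control supplied by Lemma \ref{Wlemma}.

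First, I would simplify the bracketed factor: since
\[
1+\frac{\varphi_q(k+1)-\varphi_q(k)}{\varphi_q(k)}=\frac{\varphi_q(k+1)}{\varphi_q(k)},
\]
the $k$-th term of the product becomes $\frac{\varphi_q(z+k)}{\varphi_q(k)}\bigl(\frac{\varphi_q(k)}{\varphi_q(k+1)}\bigr)^{z}$. Now I would work with the partial product up to index $n$. The second piece telescopes,
\[
\prod_{k=1}^{n}\left(\frac{\varphi_q(k)}{\varphi_q(k+1)}\right)^{z}=\left(\frac{\varphi_q(1)}{\varphi_q(n+1)}\right)^{z},
\]
while the first piece is directly rewritten using the Bernstein-gamma recursion \eqref{eq:Bern-Gamma} (which iterated gives $W_{\varphi_q}(z+n+1)=W_{\varphi_q}(z+1)\prod_{k=1}^{n}\varphi_q(z+k)$ and likewise for $z=0$):
\[
\prod_{k=1}^{n}\frac{\varphi_q(z+k)}{\varphi_q(k)}=\frac{W_{\varphi_q}(z+n+1)}{W_{\varphi_q}(z+1)\,W_{\varphi_q}(n+1)}.
\]

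Multiplying the two pieces and pulling out the prefactor $1/\varphi_q^{z}(1)$, the partial product assumes the clean form
\[
\frac{1}{\varphi_q^{z}(1)}\prod_{k=1}^{n}\frac{\varphi_q(z+k)}{\varphi_q(k)}\left(\frac{\varphi_q(k)}{\varphi_q(k+1)}\right)^{z}=\frac{1}{W_{\varphi_q}(z+1)}\cdot\frac{W_{\varphi_q}(z+n+1)}{W_{\varphi_q}(n+1)\,\varphi_q^{z}(n+1)}.
\]
Applying Lemma \ref{Wlemma} with $u=n$ and $\phi=\varphi_q$, the second factor on the right tends to $1$ as $n\to\infty$, so the partial products converge and the limit equals $\Gamma(z+1)/(q\,W_{\varphi_q}(z+1))$ after multiplying by $\Gamma(z+1)/q$. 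This proves the identity at least on $\Cb_{\lbbrb{0,\infty}}$, uniformly on compacts thanks to the uniform statement of Lemma \ref{Wlemma}.

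Finally, to cover the full strip $\Cb_{\lbrb{-1,\infty}}$ stated in the lemma, I would invoke analytic continuation: $\Gamma(z+1)/W_{\varphi_q}(z+1)$ is holomorphic on $\Cb_{\lbrb{-1,\infty}}$ by \eqref{bgreln}, while the infinite product defines a holomorphic function on this strip as well (its partial products are meromorphic and, by the identity just derived on $\Cb_{\lbbrb{0,\infty}}$, locally uniformly convergent); hence the identity extends. The only genuine obstacle is the asymptotic step $\lim_{n\to\infty}W_{\varphi_q}(z+n+1)/(W_{\varphi_q}(n+1)\varphi_q^{z}(n+1))=1$, which is exactly the content of Lemma \ref{Wlemma}; everything else is bookkeeping built on the recursion \eqref{eq:Bern-Gamma} and a telescoping cancellation.
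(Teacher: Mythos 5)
Your argument is correct and follows essentially the same route as the paper: both rest on the recursion \eqref{eq:Bern-Gamma} to turn the partial product into the ratio $W_{\varphi_q}(z+n+1)/\bigl(W_{\varphi_q}(z+1)W_{\varphi_q}(n+1)\bigr)$, the telescoping identity $\varphi_q(n+1)=\varphi_q(1)\prod_{k=1}^n\varphi_q(k+1)/\varphi_q(k)$, and Lemma \ref{Wlemma} for the limit. Your closing analytic-continuation step to reach all of $\Cb_{(-1,\infty)}$ is a small extra precaution the paper's proof leaves implicit, but the substance is identical.
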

We prove this lemma in Section \mbox{\ref{lemmasproofs}}.
Next, let us  state some important facts about convolutions and Laplace transforms.  We define these as $(f\ast g) (t):= \int_0^t f(s) g(t-s) ds$, and $ \cal{L}\big|_q \{f\} := \int_0^\infty   e^{-qs} f(s) ds$, respectively. Then we have
\b{equation} 
\label{laplaceconv}
\cal{L}\{ f\ast g \} = \cal{L}\{f\} \times \cal{L}\{ g \}, 
\e{equation}
  \b{equation}
  \label{laplacesum}
  \cal{L}\{ \alpha f(\cdot)+ \beta g(\cdot) \} = \alpha\cal{L}\{f\} +\beta \cal{L}\{ g \}, \qquad \alpha,\beta\in\bb{C}.
  \e{equation}
Moreover, for $f(t):=t^{z-1}, g(t):=m t^{m-1}$, 
\b{equation} \label{m-1}
    (f\ast g)(t)
    =     m\f{\Gamma(z) \Gamma(m)}{  \Gamma(z+m)  }     t^{z+m-1}   = \f{m!}{(z)^{(m)}}t^{z+m-1}, 
\end{equation}
where we recall that $(z)^{(m)}=z\lbrb{z+1}\cdots (z+m-1)$ is the rising factorial function.
 Similarly, with $f(t):=t^{z-1}, g(t):= t^{m}$,
\b{equation} \label{m}
    (f\ast g)(t) 
    =    \f{m!}{(z)^{(m)}} \f{t^{z+m} }{z+m} =\f{m!}{(z)^{(m)}}   \int_0^t s^{z+m-1}  ds.
\end{equation}
Also, we can expand brackets for convolutions of exponentials, in the sense that
\begin{equation} \label{expout}
( f\times e^{c\cdot} ) \ast (g\times e^{c\cdot} )(t) =   e^{ct}(f\ast g)(t).
\end{equation}

\n Now we are ready to prove Theorem \mbox{\ref{convolutionformula}}.

\begin{proof}[Proof of Theorem \mbox{\ref{convolutionformula}}]   
First, observe that since $\varphi'_q$ is non-increasing, applying $\l(\ref{eq:phi'_phi}\r)$, we have
\[
 \f{    \varphi_q(k+1)  -   \varphi_q(k)  }{  \varphi_q(k)  } =   \f{   \int_k^{k+1} \varphi_q'(x)dx  }{  \varphi_q(k)  }      \leq   \f{    \varphi'_q(k)    }{   \varphi_q(k)  }   \overset{(\ref{eq:phi'_phi})}{\leq} \frac1k<1.
 \]
 Then using the generalised binomial series expansion for $|w|<1$   
\[
(1+w)^{-z}
=      \sum_{m=0}^\infty      \f{ (z)^{(m)}    \ (-w)^m }{ m!   }   ,
\]
 we can rewrite   the product in $\l(\ref{prodformula}\r)$  as 
\b{equation}
\label{sumprod}
\l(\ref{prodformula}\r) =  \f{\Gamma(z+1)}{q  \varphi^z_q(1)} \prod_{k=1}^\infty  \l[ \f{\varphi_q(z+k)}{\varphi_q(k)}  \sum_{m=0}^\infty \l(    \f{ (z)^{(m)}     }{ m!   }    \l(     \f{ -[  \varphi_q(k+1)  -  \varphi_q(k) ] }{ \varphi_q(k)  }   \r)^{m}    \r)     \r].
\e{equation}
 Now, we can write each individual term in $\l(\ref{sumprod}\r)$ as a well-known Laplace transform, which will then allow us to invert the Laplace transform to yield the expression in $\l(\ref{convformula}\r)$ as an infinite convolution. 
 One  easily verifies that the terms in $\l(\ref{sumprod}\r)$ are the following Laplace transforms:
\begin{align}
\label{conv1}   
\f{1}{q} &=   \cal{L}\Big|_q \l\{     1  \r\}  
\\
\label{conv2}     \vphantom{\Bigg(}
 \f{1}{\varphi^z_q(1)} = \f{1}{(\phi(1)+q)^z} &=  \cal{L} \Big|_q  \l\{   \f{  t^{z-1}   }{ \Gamma(z) e^{\phi(1)t}  }      \r\}       
\\
\label{conv3}   \vphantom{\Bigg(}
\f{\varphi_q(z+k)}{\varphi_q(k)} = 1 + \f{\phi(z+k)-\phi(k)}{\phi(k)+q}  &=    \cal{L}\Big|_q  \l\{     \delta_0(dt) +  \f{ \l[ \phi(z+k) - \phi(k)  \r]  }{  e^{  \phi(k)t}  }   \r\} 
\\
\label{conv4}   \vphantom{\Bigg(}
 \hspace{-10pt} \l[     \f{   \varphi_q(k+1)  -  \varphi_q(k)  }{ \varphi_q(k)  }   \r]^{m}      \hspace{-7pt}   =     \hspace{-3pt}  \l[     \f{   \phi(k+1)  -  \phi(k)  }{ \phi(k) +q  }   \r]^{m}     \hspace{-7pt}  &=     \cal{L}\Big|_q      \l\{  [\phi(k+1)  -  \phi(k) ]^m   \f{  t^{m-1}     }{\Gamma(m) e^{\phi(k) t }  }   \r\} \hspace{-2pt} ,     m\geq1,
\end{align}
and when $m=0$, the term in $\l(\ref{conv4}\r)$ is simply the Laplace transform of the unit point mass  $\delta_0(dt)$.
%
%
%
%
%
%
%
%
%
%
Substituting  $\l(\ref{conv1}\r),\l(\ref{conv2}\r)$,  $\l(\ref{conv3}\r),$  and $\l(\ref{conv4}\r)$ into the equation  $\l(\ref{sumprod}\r),$    we arrive at the following formula:
\b{flalign}
\label{bss}
  &  \cal{L}\big|_q \l\{   \hspace{0.5pt}  \bb{E}\big[   \hspace{0.5pt}   I_{\phi}^z(t)    \hspace{0.5pt}    \big]   \hspace{0.5pt}    \r\}  
    = 
    \Gamma(z+1) \cal{L}\Big|_q \Bigg\{   1\ast  \f{  t^{z-1}   }{\Gamma(z) e^{\phi(1)t}}  \    \ast  \overset{\infty}{\underset{k=1}{\conv}} \Bigg[   \l( \delta_0(dt)+\f{[\phi(k+z)-\phi(k)]}{e^{\phi(k)t} } \r)       &&
\end{flalign}
\[
 \hspace{170pt}  \ast \l(  \delta_0(dt)+ \sum_{m=1}^\infty      \f{  (z)^{(m)}  (-[\phi(k+1) - \phi(k)])^m   t^{m-1}    }{ m! \  \Gamma(m)   \  e^{\phi(k) t } }       \r)      \Bigg]     \Bigg\}.    
\]

Now, by $\l(\ref{laplacesum}\r)$ and $\l(\ref{expout}\r)$,   noting  $\Gamma(m)=(m-1)!$ and $\delta_0(\cdot) \ast f \equiv f$ for any function $f$, we have:
\begin{align*}
\l(\ref{bss}\r) 
&\overset{\l(\ref{laplacesum}\r)}=          \cal{L}\Big|_q \Bigg\{   1\ast  \f{ z t^{z-1} }{e^{\phi(1)t}}     \    \ast   
\overset{\infty}{\underset{k=1}{\conv}} \Bigg[   \delta_0(dt)+   \sum_{m=1}^\infty      \f{  (z)^{(m)}  (-[\phi(k+1) - \phi(k)])^m  }{ (m!)^2  }   \f{ m  t^{m-1}    }{    e^{ \phi(k) t } }     
\\
&\hspace{27pt}   + \f{[\phi(k+z)-\phi(k)]}{e^{\phi(k)t}   }  +  \f{[\phi(k+z)-\phi(k)]}{e^{\phi(k)t} }  \ast    \sum_{m=1}^\infty      \f{  (z)^{(m)}  (-[\phi(k+1) - \phi(k)])^m  }{ (m!)^2  }   \f{m t^{m-1} }{    e^{\phi(k) t }   }           \Bigg]     \Bigg\}
\\
&\overset{\l(\ref{expout}\r)}=        \cal{L}\Big|_q \Bigg\{   1\ast  \f{z t^{z-1} }{e^{\phi(1)t}}     \    \ast   
\overset{\infty}{\underset{k=1}{\conv}} \Bigg[   \delta_0(dt)+   \sum_{m=1}^\infty      \f{  (z)^{(m)}  (-[\phi(k+1) - \phi(k)])^m  }{ (m!)^2  }   \f{ m  t^{m-1}    }{    e^{ \phi(k) t } }    
\\
&\hspace{42pt}   + \f{[\phi(k+z)-\phi(k)]}{e^{\phi(k)t}   }    +   [\phi(k+z)-\phi(k)]     \sum_{m=1}^\infty      \f{ (z)^{(m)}  (-[\phi(k+1) - \phi(k)])^m  }{ (m!)^2  }   \f{  t^{m} }{    e^{\phi(k) t }   }           \Bigg]     \Bigg\}
\\
&\overset{\phantom{\l(\ref{expout}\r)}}=       \cal{L}\Big|_q \Bigg\{   1\ast  \f{z t^{z-1} }{e^{\phi(1)t}}     \    \ast   
\overset{\infty}{\underset{k=1}{\conv}} \Bigg[   \delta_0(dt)  +  \sum_{m=1}^\infty      \f{  (z)^{(m)}  (-[\phi(k+1) - \phi(k)])^m  }{ (m!)^2  }   \f{ m  t^{m-1}    }{    e^{ \phi(k) t } }     
\\
&\hspace{126pt}+   [\phi(k+z)-\phi(k)]     \sum_{m=0}^\infty      \f{ (z)^{(m)} [-\lbrb{\phi(k+1) - \phi(k)}]^m  }{ (m!)^2  }   \f{  t^{m} }{    e^{\phi(k) t }   }           \Bigg]     \Bigg\},
\end{align*}
and then the desired result $\l(\ref{convformula}\r)$ follows immediately by a simple Laplace inversion.
\end{proof}


	 \n  Lemma \mbox{\ref{nthtermlemma}}, which we prove in Section \mbox{\ref{lemmasproofs}}, builds upon Theorem \mbox{\ref{convolutionformula}} by evaluating the first $n$ terms in the infinite convolution. In the proof of Theorem  \mbox{\ref{mainthm}}, we will take limits as $n\to\infty$.

\begin{lemma}    
\label{nthtermlemma}  
For all $\phi\in\cal{B}$, $z\in \bb{C}_{(0,\infty)}$, and  $n\in\bb{N}$,  the following truncated convolution satisfies
 \[
   \f{z t^{z-1}}{ e^{\phi(1)t}     } 
\ast \overset{n}{\underset{k=1}{\conv}} \Bigg[   \delta_0(dt)    +  \sum_{m=1}^\infty   \f{  (z)^{(m)} }{  (m!)^2  }  \l( - [\phi(k+1)-\phi(k)]\r)^m   \f{mt^{m-1} }{e^{\phi(k)t}}
\]
\[ 
\hspace{4.95cm} +   [\phi(z+k)-\phi(k)]      \sum_{m=0}^\infty      \f{  (z)^{(m)} }{  (m!)^2  }  \l( - [\phi(k+1)-\phi(k)]\r)^{m}   \f{t^m  }{e^{\phi(k)t}}   \Bigg]
\]
\b{equation}
\label{nthtermeqn}
=
\f{ z t^{z-1}  }{ e^{\phi(n+1)t}    }
+   z \sum_{k=1}^n       \f{   \underset{1\leq j\leq n }{\prod}      [ \phi(z+j) - \phi(k)]    }{   \underset{1\leq j \leq n; j\neq k}{\prod}     [ \phi(j) - \phi(k)]  \hspace{15pt}   }       \f{        \gamma(z,[\phi(n+1)-\phi(k)]t)   }{      [\phi(n+1)-\phi(k)]^z    \    e^{\phi(k)t}   }  ,
\e{equation}
where $\gamma(z,u):= \int_0^u  e^{-x} x^{z-1}dx$ denotes the lower incomplete gamma function.
\end{lemma}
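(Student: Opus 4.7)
The plan is to apply $\cal{L}|_q$ in $t$ to both sides of \eqref{nthtermeqn}: the LHS convolutions become products of elementary transforms, the RHS incomplete gamma terms become clean rational-function transforms, and matching the two sides reduces the lemma to a standard partial fraction identity. Uniqueness of Laplace inversion then closes the argument. The approach mirrors the Laplace-transform strategy already used in the proof of Theorem \ref{convolutionformula}.

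For the LHS, I would recognise each $k$-th factor of the convolution as a binomial series $(1+w)^{-z}=\sum_{m=0}^\infty\f{(z)^{(m)}}{m!}(-w)^m$ in the variable $w=[\phi(k+1)-\phi(k)]/\varphi_q(k)$, which converges absolutely for $q\geq 0$ because $|w|<1$ by \eqref{eq:phi'_phi}. Combining this with the Laplace transforms \eqref{conv1}--\eqref{conv4}, each of the $n$ factors has Laplace transform $\f{\varphi_q(z+k)}{\varphi_q(k)}\lbrb{\varphi_q(k)/\varphi_q(k+1)}^z$. The leading factor contributes $\cal{L}|_q\{zt^{z-1}/e^{\phi(1)t}\}=\Gamma(z+1)/\varphi_q^z(1)$, and telescoping of $\prod_{k=1}^n\lbrb{\varphi_q(k)/\varphi_q(k+1)}^z$ then collapses the entire LHS transform to $\f{\Gamma(z+1)}{\varphi_q^z(n+1)}\prod_{k=1}^n\f{\varphi_q(z+k)}{\varphi_q(k)}$.

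For the RHS, a Fubini exchange in the defining integral of $\gamma$ yields $\cal{L}|_q\{\gamma(z,at)e^{-bt}\}=\f{a^z\Gamma(z)}{(q+b)(q+a+b)^z}$ for $a,b\geq 0$. Substituting $a=\phi(n+1)-\phi(k)$, $b=\phi(k)$, the RHS transform equals
\[
\f{\Gamma(z+1)}{\varphi_q^z(n+1)}\lbrb{1+\sum_{k=1}^n\f{\prod_{j=1}^n[\phi(z+j)-\phi(k)]}{\prod_{1\leq j\leq n,\,j\neq k}[\phi(j)-\phi(k)]}\cdot\f{1}{\varphi_q(k)}}.
\]
Cancelling the common prefactor $\Gamma(z+1)/\varphi_q^z(n+1)$ reduces the lemma to the identity
\[
\prod_{k=1}^n\f{\varphi_q(z+k)}{\varphi_q(k)}=1+\sum_{k=1}^n\f{\prod_{j=1}^n[\phi(z+j)-\phi(k)]}{\prod_{1\leq j\leq n,\,j\neq k}[\phi(j)-\phi(k)]}\cdot\f{1}{\varphi_q(k)},
\]
which is the partial fraction decomposition of the LHS in the variable $q$: both sides tend to $1$ as $|q|\to\infty$, have simple poles at the distinct points $q=-\phi(k)$ (distinct by the strict monotonicity of $\phi$, Proposition \ref{propAsymp1}\ref{it:bernstein_cm}), and share the same residues there by direct calculation.

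The main obstacle is largely organisational: keeping the binomial resummation, the Laplace transforms, and the telescoping aligned so that the final identity emerges transparently as a partial fraction decomposition. Justifying the interchange of series and Laplace transform is routine thanks to $|w|<1$ on $\Re(q)\geq 0$, and the identity, once verified as rational functions of $q$ for real $z>0$, extends to all $z\in\Cb_{\lbrb{0,\infty}}$ by analytic continuation.
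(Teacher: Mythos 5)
Your proposal is correct, but it takes a genuinely different route from the paper's proof. The paper argues by induction on $n$ entirely in the time domain: it checks $n=1$ by direct computation, convolves the $(n-1)$-fold formula with the $n$-th factor, sorts the resulting terms into the groups $(1A)\ast(HG1)$, $(1A)\ast(HG2)$, $(1k)\ast(HG1)$ and $(1k)\ast(HG2)$, and closes the induction with the Vandermonde-determinant identity of Lemma \ref{nthtermproof}. You instead transport the whole statement to the Laplace domain: the binomial resummation and the telescoping of $\lbrb{\varphi_q(k)/\varphi_q(k+1)}^z$ collapse the left-hand transform to $\Gamma(z+1)\varphi_q^{-z}(n+1)\prod_{k=1}^{n}\varphi_q(z+k)/\varphi_q(k)$, which is exactly the $n$-fold truncation of \eqref{prodformula}; the formula $\Lc|_q\{\gamma(z,at)e^{-bt}\}=a^z\Gamma(z)(q+b)^{-1}(q+a+b)^{-z}$ handles the right-hand side; and the lemma reduces to the partial-fraction expansion in $q$ of $\prod_{k=1}^n\varphi_q(z+k)/\varphi_q(k)$, verified by matching the value at infinity and the residues at the simple poles $q=-\phi(k)$. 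That residue identity is precisely what the paper's Lemma \ref{nthtermproof} encodes via a cofactor expansion, so both proofs ultimately rest on the same algebraic fact; your route exposes it more transparently, is shorter, and explains where the coefficients $\prod_{j}[\phi(z+j)-\phi(k)]/\prod_{j\neq k}[\phi(j)-\phi(k)]$ come from. The price is that you must justify the interchange of the series with $\Lc|_q$ and appeal to uniqueness of Laplace transforms of continuous functions; both steps are routine here (the series are Bessel-type entire functions of $t$ times exponentials, and it suffices to work with $q>0$, where $|w|<1$ holds strictly for every $k\geq 1$), and they are no less rigorous than the ``simple Laplace inversion'' already invoked in the paper's proof of Theorem \ref{convolutionformula}.
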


\n	 The proof of  Theorem \mbox{\ref{mainthm}} also requires the following lemmas, which are also proven later in Section \mbox{\ref{lemmasproofs}}.

	 \begin{lemma}
	 \label{phi'lemma}
	 For each $\phi\in\cal{B}$ and $c>0$, there exists a constant $C_{\phi,c}>0$ such that for all   $n\geq1$, 
	 \[
	    \f{\phi'(n)}{\phi'(n+c)} \leq    C_{\phi,c}.
	 \]
	 \end{lemma}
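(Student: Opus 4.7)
The plan is to exploit the fact, recorded in Proposition \ref{propAsymp1}\ref{it:bernstein_cm}, that $\phi'$ is completely monotone on $[0,\infty)$. Completely monotone functions are log-convex: this follows from the representation $\phi'(x) = \dr + \int_0^\infty y e^{-xy} \Pi(dy)$, which exhibits $\phi'$ as a Laplace transform of the positive measure $\dr\delta_0(dy)+y\Pi(dy)$, combined with a direct Cauchy--Schwarz argument showing $\phi'\bigl(\tfrac{x_1+x_2}{2}\bigr)^2 \leq \phi'(x_1)\phi'(x_2)$.

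Log-convexity of $\phi'$ translates directly into monotonicity of the ratio: for any fixed $c>0$, the function
\[
x \mapsto \frac{\phi'(x+c)}{\phi'(x)}
\]
is non-decreasing on $[0,\infty)$. Consequently, for every $n\geq 1$,
\[
\frac{\phi'(n+c)}{\phi'(n)} \geq \frac{\phi'(1+c)}{\phi'(1)} > 0,
\]
where positivity of the right-hand side uses that $\phi'$ is everywhere positive on $[0,\infty)$ whenever $\phi$ is non-trivial (and the claim is vacuous otherwise). Inverting this inequality gives the lemma with
\[
C_{\phi,c} := \frac{\phi'(1)}{\phi'(1+c)}.
\]

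The only thing requiring a small verification is finiteness of $\phi'(1)$; this follows from $\phi'(1) = \dr + \int_0^\infty y e^{-y}\Pi(dy)$ together with the integrability condition $\int_0^\infty \min\{y,1\}\Pi(dy) < \infty$ in Definition \ref{bdefn}, since $ye^{-y}$ is bounded by a constant multiple of $\min\{y,1\}$ on $(0,\infty)$. No step is genuinely hard; the main conceptual point is simply recognizing that complete monotonicity of $\phi'$ yields log-convexity, after which the monotonicity of the ratio $\phi'(\cdot+c)/\phi'(\cdot)$ does all the work.
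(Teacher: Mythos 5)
Your proof is correct, and it takes a genuinely different route from the paper's. The paper works directly with the representation $\phi'(x)=\textup{d}+\int_0^\infty y e^{-xy}\Pi(dy)$, fixes a level $y\geq 1$ with $\int_0^y x\,\Pi(dx)>0$, splits the integral at $y$, and bounds the near part by $e^{cy}$ and the tail by using that $u\mapsto u e^{-u}$ decreases for $u\geq 1$; this yields an explicit but somewhat ad hoc constant $1+e^{cy}+y\,\overline{\Pi}(y)e^{cy}/\int_0^y x\,\Pi(dx)$. You instead observe that complete monotonicity of $\phi'$ (Bernstein's theorem plus Cauchy--Schwarz on the representing measure $\textup{d}\,\delta_0(dy)+y\,\Pi(dy)$) gives log-convexity, so that $x\mapsto \phi'(x+c)/\phi'(x)$ is non-decreasing and the whole lemma reduces to the single evaluation $C_{\phi,c}=\phi'(1)/\phi'(1+c)$. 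Your argument is shorter, yields a cleaner (and in fact monotone, hence sharper for large $n$) constant, and proves strictly more, namely monotonicity of the ratio; the paper's argument has the minor advantage of producing a constant expressed directly in terms of $\textup{d}$ and $\Pi$. Both proofs must implicitly exclude the degenerate case $\phi\equiv\phi(0)$ (where $\phi'\equiv 0$ and the ratio is $0/0$); you flag this explicitly, whereas the paper does so only implicitly by assuming a $y$ with $\int_0^y x\,\Pi(dx)>0$ exists. Your finiteness check of $\phi'(1)$ via $ye^{-y}\leq \min\{y,1\}$ is also correct.
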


	 \b{lemma}   For  $\phi\in\cal{B}$ satisfying the condition in Definition \mbox{\ref{regularitycondition}} and for each $z\in\cc$,  
\label{absconvlemma}
\b{equation}
\label{absconvlemmaeqn}
  \sum_{k=1}^\infty     \l|      \f{   \underset{1\leq i\leq k }{\prod}      [ \phi(z+i) - \phi(k)]    }{   \underset{1\leq j \leq k-1}{\prod}     [ \phi(j) - \phi(k)]  \hspace{15pt}   }  \f{     e^{-\phi(k)t}   }{ W_{ \phi_{(k)}}(z+1)  }  \r| < \infty.
\e{equation}
\e{lemma}

	 \begin{lemma} 
	 \label{productbound} For all $\phi\in\cal{B}$  and  $z\in\cc$,   there is a constant $c_{\phi,z}>0$ such that for all $k\geq1$, 
\[
\underset{1\leq j\leq k-1 }{\prod}     \f{ | \phi(z+j) - \phi(k)|  }{   \phi(k) - \phi(j) }  \leq  c_{\phi,z}.
\]

	 \end{lemma}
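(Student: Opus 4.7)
The plan is to bound each factor in the product individually by a quantity depending only on $z$ and $k-j$, reducing the question to a classical product handled by Gamma-function asymptotics. The (slightly surprising) point is that the $\phi$-dependence drops out completely, thanks to the complete monotonicity of $\phi'$.

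I would first represent both differences as straight-line path integrals of $\phi'$,
\[
\phi(z+j)-\phi(k) = (z-(k-j))\int_0^1 \phi'((1-s)k+s(z+j))\,ds,\qquad \phi(k)-\phi(j)=(k-j)\int_0^1 \phi'(j+s(k-j))\,ds,
\]
which is legitimate because the first path has real part lying between $a+j$ and $k$, where $a:=\Re z>0$, hence stays in $\cc$. Using the bound $|\phi'(w)|\leq \phi'(\Re w)$ for $\Re w>0$ (immediate from \eqref{eq:phi'}), this gives
\[
\frac{|\phi(z+j)-\phi(k)|}{\phi(k)-\phi(j)} \;\leq\; \left|\frac{z}{k-j}-1\right|\cdot\frac{M_{j,k}}{N_{j,k}},
\]
where $M_{j,k}$ and $N_{j,k}$ are the positive means of $\phi'$ over the real intervals with endpoints $\{k,a+j\}$ and $\{j,k\}$, respectively.

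The critical step is to verify $M_{j,k}\leq N_{j,k}$. If $a+j\leq k$, then $[a+j,k]\subseteq [j,k]$, and since $\phi'$ is non-increasing on $(0,\infty)$ by Proposition \ref{propAsymp1}, removing the initial subinterval $[j,a+j]$ on which $\phi'$ is largest can only decrease the mean. Concretely, $\phi(k)-\phi(a+j)\leq (k-a-j)\phi'(a+j)$ and $\phi(a+j)-\phi(j)\geq a\phi'(a+j)$, so $a\bigl(\phi(k)-\phi(a+j)\bigr)\leq(k-a-j)\bigl(\phi(a+j)-\phi(j)\bigr)$, which rearranges to $M_{j,k}\leq N_{j,k}$. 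If $a+j>k$, then trivially $M_{j,k}\leq \phi'(k)\leq N_{j,k}$. In either case,
\[
\frac{|\phi(z+j)-\phi(k)|}{\phi(k)-\phi(j)}\leq \left|\frac{z}{k-j}-1\right|.
\]

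Substituting $m=k-j$ and using the Pochhammer identity $\prod_{m=1}^{k-1}(m-z)=\Gamma(k-z)/\Gamma(1-z)$,
\[
\prod_{j=1}^{k-1}\frac{|\phi(z+j)-\phi(k)|}{\phi(k)-\phi(j)}\leq \prod_{m=1}^{k-1}\frac{|m-z|}{m} = \frac{|\Gamma(k-z)|}{(k-1)!\,|\Gamma(1-z)|}.
\]
Standard Gamma-function asymptotics give $|\Gamma(k-z)|/(k-1)!\sim k^{-\Re z}$ as $k\to\infty$, which is bounded (in fact vanishing) because $\Re z>0$; the case $z\in\N$ is trivial since the product vanishes once $k>z$. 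Setting $c_{\phi,z}:=\sup_{k\geq 1}\prod_{m=1}^{k-1}|m-z|/m<\infty$ completes the proof. The main obstacle is the monotonicity-driven estimate $M_{j,k}\leq N_{j,k}$, which is what allows the $\phi$-dependence to vanish; once it is established, everything reduces to elementary manipulations and well-known Gamma asymptotics.
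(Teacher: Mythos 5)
Your proof is correct, and its second half takes a genuinely different --- and cleaner --- route than the paper's. The first half coincides in substance with the paper's argument: both represent $\phi(z+j)-\phi(k)$ and $\phi(k)-\phi(j)$ as straight-line integrals of $\phi'$, use $|\phi'(w)|\leq\phi'(\Re(w))$ together with the monotonicity of $\phi'$ on $(0,\infty)$ to dominate the complex increment by the real one, and extract the factor $\bigl|1-\tfrac{z}{k-j}\bigr|$. (The paper runs the pointwise comparison $\phi'\bigl(a+j+(1-\tfrac{a}{k-j})\tau\bigr)\leq\phi'(j+\tau)$ along the path, which holds for every $j$ at once; your averaged comparison $M_{j,k}\leq N_{j,k}$ has the same effect but forces the case split at $a+j\lessgtr k$, both branches of which you handle correctly.) The divergence is in how the product is then controlled. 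The paper only exploits $\bigl|1-\tfrac{z}{k-j}\bigr|\leq 1$ in the range $j\leq k-C_z$ with $C_z\geq \f{|z|^2}{2\Re(z)}$, and treats the remaining boundedly many near-diagonal factors by a separate estimate using $\phi(k)-\phi(j)\geq\phi'(k)$ and Lemma \ref{phi'lemma}. You instead retain the explicit bound $|m-z|/m$ (with $m=k-j$) for every factor, so that all $\phi$-dependence disappears and the problem reduces to the uniform boundedness in $k$ of $\prod_{m=1}^{k-1}|m-z|/m$; this follows from the Gamma asymptotics you cite, or even more elementarily from the observation that $|1-z/m|\leq 1$ once $m\geq |z|^2/(2\Re(z))$ --- exactly the paper's threshold --- leaving only finitely many factors to absorb into the constant. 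Your route avoids Lemma \ref{phi'lemma} entirely and yields the slightly stronger conclusion that the product is $\mathrm{O}(k^{-\Re(z)})$ rather than merely bounded. The only caveats, shared equally by the paper's proof, are the tacit assumption that $\phi$ is non-constant (so the denominators are positive) and, in the Gamma-function formulation, the pole of $\Gamma(1-z)$ at $z\in\N$, which you already dispose of correctly.
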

	 The structure of the proof of Theorem \mbox{\ref{mainthm}} is as follows:  First, we will find the termwise limit, as $n\to\infty$, of each summand in $\l(\ref{nthtermeqn}\r)$. This gives a formula for the infinite convolution in $\l(\ref{convformula}\r)$, without the convolution with $1$, which means this limit is an expression for $\f{d}{dt}\bb{E}[I_\phi^z(t)]$.   To show that the termwise limits correspond to the limit of the whole expression in $\l(\ref{nthtermeqn}\r)$, we employ a dominated convergence argument. We   finish the proof of Theorem \mbox{\ref{mainthm}} by integrating each term in our expression for $\f{d}{dt}\bb{E}[I_\phi^z(t)]$, which requires a second dominated convergence argument. 
	 \begin{proof}[Proof of Theorem \mbox{\ref{mainthm}}] 

  Denoting $\phi_{(k)}(w):= \phi(w+k)-\phi(k)$,   we first verify that $\phi_{(k)}$ is itself a Bernstein function, with unchanged drift and rescaled  L\'evy measure of the from $e^{-kx}  \Pi(dx)$. Indeed, this follows by noting that $\l(\ref{eq:Bern}\r)$ gives
\begin{align*}
\phi_{(k)}(w) &= \phi(0)+ (k+w)\textup{d} + \int_0^\infty    (1-e^{-(k+w)x}) \Pi(dx) - \phi(0)-  k\textup{d} - \int_0^\infty    (1-e^{-kx}) \Pi(dx) 
\\
&= w \textup{d} +  \int_0^\infty    (1-e^{-wx})  e^{-kx}  \Pi(dx).
\end{align*}

 \n  Now, rewriting in terms of $\phi_{(k)}$, the   convolution in $ \l(\ref{nthtermeqn}\r)$ satisfies
\b{align*}
 \l(\ref{nthtermeqn}\r) &=  \f{ z t^{z-1}  }{ e^{\phi(n+1)t}    }
+   z    \sum_{k=1}^n      \f{     \prod_{j=1}^{k-1}   [ \phi(z+j) - \phi(k)]    }{         \prod_{j=1}^{k-1}     [ \phi(j) - \phi(k)]      }  \   \f{     \prod_{j=k}^{n}      [ \phi(z+j) - \phi(k)]    }{         \prod_{j=k+1}^{n}        [ \phi(j) - \phi(k)]       }  \    \f{        \gamma(z,[\phi(n+1)-\phi(k)]t)   }{      [\phi(n+1)-\phi(k)]^z   \     e^{\phi(k)t}   }  
\\
 &=     \f{ z t^{z-1}  }{ e^{\phi(n+1)t}    }
+   z \sum_{k=1}^n   \l(      \prod_{j=1}^{k-1}    \f{ [ \phi(z+j) - \phi(k)]    }{       [ \phi(j) - \phi(k)]      } \r)
  \f{       \prod_{j=0}^{n-k}      \phi_{(k)}(z+j)     }{      \prod_{j=1}^{n-k}      \phi_{(k)}(j)        }  \ 
   \f{        \gamma(z,[\phi(n+1)-\phi(k)]t)   }{     \phi^z_{(k)}(n+1-k)  \         e^{\phi(k)t}   }.  
 \end{align*}
For the termwise limit as $n\to\infty$,  first observe   that  under the conditions of Definition \ref{regularitycondition}, relation \eqref{blumbound} below implies that $\phi(\infty)=\infty$ and hence $\lim_{n\to\infty}     z t^{z-1}  / e^{\phi(n+1)t}    =0$, and  second that $\lim_{n\to\infty} \gamma(z,[\phi(n+1)-\phi(k)]t) = \Gamma(z)$.  Repeatedly using that $W_{\phi_{(k)}}(w+1)=   \phi_{(k)}(w) W_{\phi_{(k)}}(w)$ from $\l(\ref{eq:Bern-Gamma}\r)$, 
noting  $W_{\phi_{(k)}}(1)=1$,  and applying Lemma \mbox{\ref{Wlemma}}, it follows that as $n\to\infty$,
\b{align}
\label{Wtoprod}
   \f{      \prod_{j=0}^{n-k}      \phi_{(k)}(z+j)    }{      \prod_{j=1}^{n-k}      \phi_{(k)}(j)    }   \     \f{1}{\phi^z_{(k)}(n+1-k)} 
%
  &\overset{\phantom{\ref{Wlemma}}}=   \f{ \phi_{(k)}(z)  W_{\phi_{(k)}}(z+n+1-k)     }{   W_{\phi_{(k)}}(z+1)   W_{\phi_{(k)}}(n+1-k)   }    \f{1}{\phi^z_{(k)}(n+1-k)}   
\\
\nonumber
  &\overset{\ref{Wlemma}}\sim    \f{  \phi_{(k)}(z)     }{   W_{\phi_{(k)}}(z+1)       }    =     \f{ [ \phi(z+k)-\phi(k)]     }{   W_{\phi_{(k)}}(z+1)       }   .
\end{align}
This gives us the  limit, as $n\to\infty$, of each summand in $\l(\ref{nthtermeqn}\r)$. Using the dominated convergence theorem, we  will now show that we can exchange the order of limits and summation,  
 which yields the following relation
\b{equation}
\label{derivativeformula}
\f{d}{dt} \bb{E}[I_\phi^z(t)] = \lim_{n\to\infty} \l(\ref{nthtermeqn}\r) =  \Gamma(z+1)  \sum_{k=1}^\infty      \f{      \prod_{j=1}^{k}      [ \phi(z+j) - \phi(k)]    }{     \prod_{j=1}^{k-1}      [ \phi(j) - \phi(k)]     }  \ 
   \f{      e^{-\phi(k)t}     }{   W_{\phi_{(k)}}(z+1)      }  .
\e{equation}

\n  In order to   apply the dominated convergence theorem, 
 we must show that the sum in $\l(\ref{nthtermeqn}\r)$  is dominated by an absolutely convergent sum. 
From $\l(\ref{Wtoprod}\r)$, we can rewrite the sum in $\l(\ref{nthtermeqn}\r)$ as 
\b{equation}
 \sum_{k=1}^\infty   \bbm{1}_{\{  k\leq n   \}}       \f{ \overset{k}{  \underset{j=1  }{\prod}}      [ \phi(z+j) - \phi(k)]    }{   \overset{k-1}{   \underset{j=1 }{\prod}}    [ \phi(j) - \phi(k)]     }      \f{  W_{\phi_{(k)}}(z+n+1-k)  \   \gamma(z,[\phi(n+1)-\phi(k)]t)    }{    W_{\phi_{(k)}}(z+1)    W_{\phi_{(k)}}(n+1-k) \phi^z_{(k)}(n+1-k)       e^{\phi(k)t}   }    
\label{F_k}
=:    \sum_{k=1}^\infty        F_k(n).
\end{equation}
Then if we can show that there exist $n_0>0$ and $C>0$  such that for all $n\geq n_0$,   and for all $k\geq 1$, 
\b{equation}
\label{desiredbound}
      |F_k(n)| \leq C \times \l|  \f{         \prod_{j=1}^{k}       [ \phi(z+j) - \phi(k)]    }{     \prod_{j=1}^{k-1}       [ \phi(j) - \phi(k)]     } \   \f{     e^{-\phi(k)t}   }{ W_{\phi_{(k)}}(z+1)  }  \r|,
\e{equation}
and moreover if 
\b{equation}
\label{absconv}
  \sum_{k=1}^\infty     \l|      \f{      \prod_{j=1}^{k}        [ \phi(z+j) - \phi(k)]    }{      \prod_{j=1}^{k-1}       [ \phi(j) - \phi(k)]    } \   \f{     e^{-\phi(k)t}   }{ W_{\phi_{(k)}}(z+1)  }  \r| < \infty,  
\e{equation}
then the dominated convergence theorem applies, so
\(
\lim_{n\to\infty}   \sum_{k=1}^\infty        F_k(n)    =      \sum_{k=1}^\infty      \lim_{n\to\infty}    F_k(n) 
\), 
as required for $\l(\ref{derivativeformula}\r)$.  
The absolute convergence of the sum in $\l(\ref{absconv}\r)$ is proven in Lemma \mbox{\ref{absconvlemma}}, so now let us prove $\l(\ref{desiredbound}\r)$.
 Firstly, one can easily verify that for all $n,k \in \bb{N}$, with $z=a+ib$,   
\b{equation}
\label{lowerincompletebound}
|\gamma(z,[\phi(n+1)-\phi(k)]t) | \leq      \int_0^{[\phi(n+1)-\phi(k)]t} |u^{z-1} e^{-u}| du   \leq     \int_0^{\infty} u^{a-1} e^{-u}du  =    \Gamma(a).
\e{equation}
The remaining term in the expression for $F_k(n)$ from $\l(\ref{F_k}\r)$ which depends on $n$ is
\begin{equation}
\label{remainingWterm}
  \f{  W_{\phi_{(k)}}(z+n+1-k)      }{  W_{ \phi_{(k)}}(n+1-k)  \phi^z _{(k)}(n+1-k)      }.
\end{equation}
Applying  Theorem \mbox{\ref{thm:Stirling}},   
we can rewrite $\l(\ref{remainingWterm}\r)$ as
\b{equation}
\label{phi(k)}
        \f{       \phi_{(k)}(n+1-k)   \phi^\f{1}{2}_{(k)}(n+2-k)   \     e^{ L_{ \phi_{(k)}}(z+n+1-k)   - E_{ \phi_{(k)}}(z+n+1-k)       - L_{ \phi_{(k)}}(n+1-k)   + E_{ \phi_{(k)}}(n+1-k)       }       }{      \phi_{(k)}(z+n+1-k)   \phi^\f{1}{2}_{(k)}(z+n+2-k)   \phi^z_{(k)}(n+1-k)       }       .  
\e{equation}
We will bound the absolute value of the terms in $\l(\ref{phi(k)}\r)$ separately.  
 Firstly, using the result \cite[Prop. 3.1.9]{ps16}  that $|\phi(a+ib)| \geq \phi(a)$ and the fact that $\phi$ is non-decreasing on $\bb{R}_+$, it follows that  
  \b{equation}
  \label{phieasybound}
\hspace{-7pt} \f{       \phi_{(k)}(n+1-k)   \  \phi^\f{1}{2}  _{(k)}(n+2-k)       }{      |\phi_{(k)}(z+n+1-k)| \   |\phi^\f{1}{2}_{(k)}(z+n+2-k)|        }  
 \leq     \f{       \phi_{(k)}(n+1-k)   \  \phi^\f{1}{2}_{(k)}(n+2-k)         }{      \phi_{(k)}(a+n+1-k)  \phi^\f{1}{2}_{(k)}(a+n+2-k)        }     \leq 1. 
 \e{equation}
 %
 %
 %

\n To bound the $E_{ \phi_{(k)}}$ terms in $\l(\ref{phi(k)}\r)$, observe that  by $\l(\ref{eq:Ek}\r)$, 
\(
\sup_{n\geq1}   \sup_{1\leq k \leq n}  |  E_{ \phi_{(k)}}(z+n+1-k) |     \leq 2,
\)
and 
\(
\sup_{n\geq1}   \sup_{1\leq k \leq n}  |  E_{ \phi_{(k)}}(n+1-k) |    \leq 2,
\)
so it follows immediately that for all $k\leq n$,
\b{equation}
\label{Eeasybound}
\l|  e^{   - E_{ \phi_{(k)}}(z+n+1-k)         + E_{ \phi_{(k)}}(n+1-k)       }  \r|   \leq e^4.
\e{equation}

\n Next,   consider the  $L_{ \phi_{(k)}}$ terms in $\l(\ref{phi(k)}\r)$, which are defined in  $\l(\ref{eq:A}\r)$. Part of the integrals cancel, so
\[
L_{ \phi_{(k)}}(z+n+1-k)  - L_{ \phi_{(k)}}(n+1-k)  =   \int_{ 1 \mapsto 1+(z+n+1-k)     }      \hspace{-10pt}    \log_0 ( \phi_{(k)}(w))dw
-   \int_{ 1 \mapsto 1+(n+1-k)     }   \hspace{-10pt}   \log_0 ( \phi_{(k)}(w))dw
\]
\[
=   \int_{ 1+(n+1-k) \mapsto 1+(z+n+1-k)     } \log_0 ( \phi_{(k)}(w))dw =     \int_{ N-k  \mapsto N-k +z    } \log_0 ( \phi_{(k)}(w))dw,
\]
where we substitute $N:=n+2$.  Using the fact that $\log_0(w)= \ln(|w|) + i \arg( w  ) $, it follows that
\b{equation}
\label{Lints}
L_{ \phi_{(k)}}(z+n+1-k)  - L_{ \phi_{(k)}}(n+1-k)  =   \int_{ N-k \mapsto N-k+ z     }  \hspace{-22pt}   \ln (| \phi_{(k)}(w)|)dw   +      i \int_{ N-k \mapsto N-k+ z     }   \hspace{-22pt}   \arg ( \phi_{(k)}(w))dw.
\e{equation}

\n We  consider the integrals in $\l(\ref{Lints}\r)$ along the contours $\gamma_1,\gamma_2$, which are straight lines connecting $N-k$ to $ N-k+a$ and $ N-k+a$ to $ N-k + a + ib$, respectively. Observe that along $\gamma_1$, $\arg( \phi_{(k)}(x))=0$, and note $|\gamma_2|=b$. Then for all $1\leq k\leq n$,   the second integral in $\l(\ref{Lints}\r)$   satisfies   
\b{equation} \label{l2}
\l| \int_{ N-k \mapsto N-k+ z     }   \hspace{-22pt}   \arg ( \phi_{(k)}(w))dw \r|
 = 
\l|  \int_{\gamma_2} \arg ( \phi_{(k)}(w))dw \r|   \leq  b \sup_{x\in \gamma_2}|  \arg( \phi_{(k)}(x)) | \leq  b \pi.
\e{equation} 


\n Next, we consider the first integral in $\l(\ref{Lints}\r)$ over $\gamma_1$. Recalling $N=n+2$, this can be written as
\b{align*}
  \int_{ \gamma_1   } \ln (| \phi_{(k)}(w)|)dw 
  &= \hspace{10pt}
   \int_{  N-k}^{   N-k+a   } \ln (  \phi_{(k)}(x)  )dx 
 \hspace{-28pt} &&=
   \int_{  0}^{   a   } \ln ( \phi_{(k)}(x+N-k))dx 
\\
&=
 \int_{  0}^{   a   } \ln (\phi(x+N) - \phi(k))dx
 \hspace{-28pt} &&=
    \int_{  0}^{   a   } \ln (\phi(x+n+2) - \phi(k))dx .
\end{align*}
Comparing this with the  $1/ \phi^a _{(k)}(n+1-k)= e^{-a\ln(  \phi_{(k)}(n+1-k)  ) }$ term from $\l(\ref{phi(k)}\r)$, we get
\begin{equation}\label{intln}
\begin{split}
&\int_{  0}^{   a   } \ln (\phi(x+n+2) - \phi(k))dx         -a\ln(  \phi_{(k)}(n+1-k)  )\\
&=\int_{  0}^{   a   } \ln \l(    \f{\phi(x+n+2) - \phi(k)  }{   \phi_{(k)}(n+1-k)  }    \r)dx    =  \int_{  0}^{   a   } \ln \l(    \f{\phi(x+n+2) - \phi(k)  }{   \phi(n+1)  - \phi(k) }    \r)dx  .
\end{split}
\end{equation}
Observe that  $ (\phi(x+n+2) - y  )/(   \phi(n+1)  - y ) $  and $\phi(y)$ are non-decreasing in $y$. So for all $k\leq n $,
\[
\l(\ref{intln}\r) \leq  \int_{  0}^{   a   } \ln \l(    \f{\phi(x+n+2) - \phi(n)  }{   \phi(n+1)  - \phi(n) }    \r)dx 
\leq   a  \ln \l(    \f{\phi(a+n+2) - \phi(n)  }{   \phi(n+1)  - \phi(n) }    \r).
\]

\n Now, since $\phi'$ is non-increasing on $\bb{R}_+$, applying Lemma \mbox{\ref{phi'lemma}},  it follows that 
\b{equation}
\label{Lgamma1bound}
\l(\ref{intln}\r) \leq    a  \ln \l(   \f{    \int_{n}^{a+n+2}  \phi'(y)dy  }{  \int_{n}^{n+1}  \phi'(y)dy }       \r) \leq  a  \ln \l(   \f{(a+2)  \phi'(n)}{\phi'(n+1)} \r)   \leq \textup{ constant} .
\e{equation}

\n Finally, consider the first integral in $\l(\ref{Lints}\r)$ over the contour $\gamma_2$. To bound $\l(\ref{phi(k)}\r)$, we   compare it to $1/ \phi_{(k)}(n+1-k)^{ib}$. Then since a real number raised to an imaginary power has absolute value 1, we arrive at
\b{equation}
\label{Lgamma2bound}
\l| \f{ e^{   \int_{\gamma_2} \ln( |  \phi_{(k)}(x)|) dx }  }{    \phi^{ib}_{(k)}(n+1-k)   } \r| 
=
   \l|   \f{ e^{   \int_{N-k+a \mapsto N-k + a + ib} \ln( |  \phi_{(k)}(x)|) dx }  }{    \phi^{ib}_{(k)}(n+1-k)   } \r| 
= 
\l|  \f{ e^{  i  \int_{0}^{b} \ln( |  \phi_{(k)}(N-k+a + iv)|) dv }  }{    \phi^{ib}_{(k)}(n+1-k)   } \r| =1.
\e{equation}

\n Combining together the bounds $\l(\ref{phieasybound}\r)$, $\l(\ref{Eeasybound}\r)$, $\l(\ref{Lgamma1bound}\r)$, and $\l(\ref{Lgamma2bound}\r)$, we conclude that there exists a constant $C(z,\phi)$ such that uniformly among $n\in\bb{N}$ and $1\leq k \leq n$, 
\(
 | \l(\ref{phi(k)}\r)   | \leq C(z,\phi)
\), 
from which
$\l(\ref{desiredbound}\r)$ follows, and therefore we have shown that $\l(\ref{derivativeformula}\r)$ holds. 
%
%
%
%
Now, applying $\l(\ref{derivativeformula}\r)$,
 and recalling from $\l(\ref{bgreln}\r)$ that $ \int_0^\infty  \f{d}{ds} \bb{E}[I_\phi^z(s)] ds   = \bb{E}[I_\phi^z(\infty)]=\Gamma(z+1)/W_{\phi}(z+1)$, we can express $\bb{E}[I_\phi^z(t)]$ as 
\b{align}
\nonumber
\bb{E}[I_\phi^z(t)] 
&=
 \int_0^t \f{d}{ds} \bb{E}[I_\phi^z(s)] ds   =    \f{\Gamma(z+1)}{W_{\phi}(z+1)} -   \int_t^\infty  \f{d}{ds} \bb{E}[I_\phi^z(s)] ds
\\
\label{intsumm}
&=
    \f{\Gamma(z+1)}{W_{\phi}(z+1)} - \Gamma(z+1) \int_t^\infty \l(    \sum_{k=1}^\infty        \f{        \prod_{j=1}^{k}        [ \phi(z+j) - \phi(k)]    }{      \prod_{j=1}^{k-1}      [ \phi(j) - \phi(k)]      }  \  \f{     e^{-\phi(k)s}   }{ W_{ \phi_{(k)}}(z+1)  }   \r)  ds.
\end{align}

\n Now, we wish to exchange the order of integration and summation in $\l(\ref{intsumm}\r)$. If we can show that 
\b{equation}
\label{intsumabsconv}
 \sum_{k=1}^\infty  \l(       \int_t^\infty  \l|  \f{  \prod_{i=1}^{k}        [ \phi(z+i) - \phi(k)]    }{   \prod_{j=1}^{k-1}      [ \phi(j) - \phi(k)]     } \   \f{     e^{-\phi(k)s}   }{ W_{ \phi_{(k)}}(z+1)  }     
 \r|
 ds
 \r)      <\infty,
\end{equation}
then by Fubini's theorem, we can exchange the order of integration and summation, yielding the result
\b{align}
\nonumber
  \int_t^\infty         \sum_{k=1}^\infty   \f{   \prod_{i=1}^{k}        [ \phi(z+i) - \phi(k)]    }{   \prod_{j=1}^{k-1}      [ \phi(j) - \phi(k)]     }    \f{     e^{-\phi(k)s}   ds }{ W_{ \phi_{(k)}}(z+1)  }      
 &=
 \sum_{k=1}^\infty        \int_t^\infty    \f{  \prod_{i=1}^{k}     [ \phi(z+i) - \phi(k)]    }{  \prod_{j=1}^{k-1}      [ \phi(j) - \phi(k)]    }    \f{     e^{-\phi(k)s}   ds  }{ W_{ \phi_{(k)}}(z+1)  }     
\\
\label{asabove}
&= 
 \sum_{k=1}^\infty          \f{    \prod_{i=1}^{k}     [ \phi(z+i) - \phi(k)]    }{  \prod_{j=1}^{k-1}      [ \phi(j) - \phi(k)]    }    \f{     e^{-\phi(k)t}   }{ \phi(k) W_{ \phi_{(k)}}(z+1)  }     .
\end{align}
Substituting this into $\l(\ref{intsumm}\r)$, 
we conclude, as required, that
\begin{equation*}
   \bb{E}[I_\phi^z(t)] 
   =  \f{ \Gamma(z+1)   }{  W_{\phi}(z+1)    }  -  
   \sum_{k=1}^\infty      \f{       \prod_{i=1}^{k}     [ \phi(z+i) - \phi(k)]    }{     \prod_{j=1}^{k-1}       [ \phi(j) - \phi(k)]  } 
   \f{     e^{-\phi(k)t}     }{   \phi(k)}   \f{ \Gamma(z+1)     }{   W_{\phi_{(k)}}(z+1)  }
\end{equation*}
or relation \eqref{theoremformula} holds true. To see that   $\l(\ref{intsumabsconv}\r)$ is finite, following the same argument as in $\l(\ref{asabove}\r)$, noting that $\phi$ is non-decreasing, 
\b{align*}
\l(\ref{intsumabsconv}\r) 
&=
 \sum_{k=1}^\infty       \l|  \f{ \prod_{i=1}^{k}       [ \phi(z+i) - \phi(k)]    }{  \prod_{j=1}^{k-1}       [ \phi(j) - \phi(k)]    }  \  \f{        e^{-\phi(k)t}    }{ \phi(k)   W_{ \phi_{(k)}}(z+1)  }     
 \r| 
\\
&\leq \f{1}{\phi(1)} \sum_{k=1}^\infty        \l|  \f{  \prod_{i=1}^{k}      [ \phi(z+i) - \phi(k)]    }{   \prod_{j=1}^{k-1}     [ \phi(j) - \phi(k)]   }  \  \f{      e^{-\phi(k)t}    }{ W_{ \phi_{(k)}}(z+1)  }     
 \r| ,
\end{align*}
and then it follows  by Lemma \mbox{\ref{absconvlemma}}   that  $\l(\ref{intsumabsconv}\r)$ is a finite quantity and so the proof of Theorem \mbox{\ref{mainthm}} is complete.

	 \end{proof}

\n 	 Before we prove Corollary \mbox{\ref{integermoments}},   we   state the following fact about Vandermonde matrices, see e.g.\ \cite[p37]{hj90}. For $a_1,\dots,a_n\in\bb{C}$, the determinant of the $n\times n$ Vandermonde matrix has the   form
\b{equation}
\label{vandermondedeterminant}
\det \begin{pmatrix}
  1 & a_1 & a_1^2 & \cdots & a_1^{n-1} \\
  1 & a_2 & a_2^2 & \cdots &a_2^{n-1} \\
  \vdots  & \vdots & \vdots  & \ddots & \vdots  \\
  1 & a_n & a_n^2 &  \cdots & a_n^{n-1}
 \end{pmatrix}
 =
   \underset{1\leq h<l\leq n }{\prod}     ( a_l- a_h). 
\e{equation}
We derive some elementary consequence of this representation, which perhaps is located somewhere in the literature.
\begin{proposition}\label{prop:Van}
	For any $n+1$ different complex numbers $a_0,a_1,\cdots, a_n$ it holds that
	\b{equation}
	\label{sum=0}
	\sum_{k=0}^{n} \ \ \    \f{  1   }{  \underset{0\leq j \leq n;   \ j\neq k}{\prod}     [ a_l - a_k]  }    =0.
	\e{equation}
\end{proposition}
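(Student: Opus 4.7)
The plan is to exploit the Vandermonde determinant formula \eqref{vandermondedeterminant} just recorded, by constructing an $(n+1)\times(n+1)$ matrix whose determinant manifestly vanishes but whose cofactor expansion produces \eqref{sum=0}. Concretely, I would consider
\[
D := \det\begin{pmatrix} 1 & 1 & a_0 & a_0^2 & \cdots & a_0^{n-1} \\ 1 & 1 & a_1 & a_1^2 & \cdots & a_1^{n-1} \\ \vdots & \vdots & \vdots & \vdots & \ddots & \vdots \\ 1 & 1 & a_n & a_n^2 & \cdots & a_n^{n-1} \end{pmatrix},
\]
which equals $0$ because its first two columns coincide.

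First I would expand $D$ along the first column, obtaining $0 = \sum_{k=0}^n (-1)^k M_k$, where $M_k$ denotes the $n\times n$ minor obtained by deleting the $k$-th row (indexed from $0$) and the first column. Each such $M_k$ is itself a Vandermonde determinant on the $n$ nodes $\{a_0,\ldots,a_n\}\setminus\{a_k\}$, so \eqref{vandermondedeterminant} gives
\[
M_k = \prod_{\substack{0 \le h < l \le n \\ h,l\neq k}}(a_l - a_h).
\]

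Then I would relate $M_k$ to the full Vandermonde product $V := \prod_{0 \le h < l \le n}(a_l - a_h)$ by partitioning the pairs in $V$ into those that avoid index $k$ (these form exactly $M_k$) and those involving $k$. For the latter, each $j \neq k$ contributes one factor: $a_k - a_j$ if $j < k$ and $a_j - a_k$ if $j > k$. Since there are exactly $k$ indices $j$ with $j < k$ among $\{0,\ldots,n\}\setminus\{k\}$, a short sign count produces
\[
V = M_k \cdot (-1)^k \prod_{j \neq k}(a_j - a_k),
\qquad \text{i.e.} \qquad
M_k = \frac{(-1)^k\, V}{\prod_{j\neq k}(a_j - a_k)}.
\]
Substituting into the cofactor expansion collapses the two powers of $(-1)^k$ into $+1$ and yields
\[
0 = D = V \sum_{k=0}^n \frac{1}{\prod_{j\neq k}(a_j - a_k)};
\]
since $V \neq 0$ by the distinctness of the $a_j$, this forces \eqref{sum=0}.

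The only genuine obstacle is the careful sign accounting relating $M_k$ to $\prod_{j\neq k}(a_j-a_k)$; once that is transparent, the identity drops out of a single cofactor expansion. As a sanity check and alternative route (which avoids matrices altogether), one can observe that $A_k := 1/\prod_{j\neq k}(a_k - a_j)$ are the partial-fraction coefficients of the rational function $1/\prod_{j=0}^n(x - a_j)$, whose sum must vanish because the function decays like $x^{-(n+1)}$ at infinity; but the Vandermonde route fits more naturally given the determinantal material just developed.
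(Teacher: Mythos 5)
Your proof is correct and follows essentially the same route as the paper: both arguments introduce the $(n+1)\times(n+1)$ matrix with a duplicated column of ones adjoined to the Vandermonde matrix, note its determinant vanishes, and extract \eqref{sum=0} from the cofactor expansion along the first column together with the Vandermonde determinant formula \eqref{vandermondedeterminant}. The sign bookkeeping relating $M_k$ to $\prod_{j\neq k}(a_j-a_k)$ is carried out correctly, so no gap remains.
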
 
 \begin{proof}[Proof of Proposition \mbox{\ref{prop:Van}}]
 	To derive \eqref{sum=0} we multiply through by $\underset{0\leq h<l \leq n}{\prod} [a_h-a_l]$,
 	and we see that $\l(\ref{sum=0}\r)$ holds if and only if
 	\b{equation} \label{vdet}
 	\sum_{k=0}^{n} \l(    (-1)^k    \underset{    h,l\neq k }{ \underset{0\leq h<l \leq n; }{\prod} }     [ a_h - a_l]   \r)     =0.
 	\e{equation}
 	Applying the formula $\l(\ref{vandermondedeterminant}\r)$  for     Vandermonde determinants,
 	one can verify that the sum  in $\l(\ref{vdet}\r)$   equals the following determinant, evaluated using the  cofactor expansion along its first column:
 	\[
 	\det \begin{pmatrix}
 	1& 1 & a_0 & a_0^2 & \cdots & a_0^{n-1} \\
 	1& 1 & a_1 & a_1^2 & \cdots & a_1^{n-1} \\
 	1& 1 & a_2 & a_2^2 &  \cdots & a_2^{n-1} \\
 	\vdots & \vdots & \vdots & \vdots & \ddots & \vdots \\
 	1& 1 & a_n & a_n^2 &  \cdots & a_n^{n-1} \\
 	\end{pmatrix}.
 	\]
 	This determinant is 0 as the first two columns are identical, so    $\l(\ref{vdet}\r)=0$. This proves \eqref{sum=0}.
 \end{proof}

	\begin{proof}[Proof of Corollary \mbox{\ref{integermoments}}] 
		We specialise \eqref{eq:LT} for the case $z=n$ and use the recurrent relation \eqref{eq:Bern-Gamma} to get 
		\begin{equation}\label{eq:LT1}
		\begin{split}
		&\   \cal{L}\big|_q \l\{   \hspace{0.5pt}  \bb{E}\big[   \hspace{0.5pt}   I_{\phi}^n(\cdot)    \hspace{0.5pt}    \big]\r\}=\int_{0}^\infty e^{-qt}\Ebb{I^n_\phi(t)}dt=\frac1q\frac{n!}{\prod_{j=1}^n\lbrb{ q+\phi(j)}}.
		\end{split}
		\end{equation}
		From here we immediately get that for any $t>0$
		\begin{equation*}
		\begin{split}
		&\Ebb{I^n_\phi(t)}= n!\lbrb{1 \ast \overset{n}{\underset{k=1}{\conv}}e^{-\phi(k)t}}.
		\end{split}
		\end{equation*}
		Clearly, for $n=1$,
		\begin{equation*}
		\begin{split}
		&\Ebb{I_\phi(t)}=\int_{0}^te^{-\phi(1)s}ds=\frac{1-e^{-\phi(1)t}}{\phi(1)}=\frac{e^{-\phi^*(0)t}-e^{-\phi^*(1)t}}{\phi^*(1)-\phi^*(0)},
		\end{split}
		\end{equation*}
		which gives the first identity of \eqref{intmoments} for $n=1$. We proceed by induction assuming that the first identity of \eqref{intmoments} holds for $n=N$ and all $t>0$. We consider $\Ebb{I^{N+1}_\phi(t)}$. From above and the inductive hypothesis we have that
		\begin{equation*}
		\begin{split}
		&\Ebb{I^{N+1}_\phi(t)}= (N+1)!\lbrb{1 \ast \overset{N+1}{\underset{k=1}{\conv}}e^{-\phi(k)t}}=(N+1)\int_{0}^t\Ebb{I^{N}_\phi(s)}e^{-\phi(N+1)(t-s)}ds\\
		&=(N+1)!\sum_{k=0}^{N-1}\frac{1}{ \   \underset{0\leq j \leq N; \hspace{2pt}  j \neq k}{\prod}     [ \phi^*(j) - \phi^*(k)]  }   \int_{0}^t  \lbrb{       e^{-\phi^*(k)s}-e^{-\phi^*(N)s}} e^{-\phi^*(N+1)(t-s)}ds\\
		&=(N+1)!\sum_{k=0}^{N-1}\frac{e^{-\phi^*(N+1)t}}{ \   \underset{0\leq j \leq N; \hspace{2pt}  j \neq k}{\prod}     [ \phi^*(j) - \phi^*(k)]  }   \int_{0}^t  \lbrb{       e^{\lbrb{\phi^*(N+1)-\phi^*(k)}s}-e^{\lbrb{\phi^*(N+1)-\phi^*(N)}s}}ds\\
		&=(N+1)!\sum_{k=0}^{N-1}\frac{e^{-\phi^*(k)t}-e^{-\phi^*(N+1)t}}{ \   \underset{0\leq j \leq N+1; \hspace{2pt}  j \neq k}{\prod}     [ \phi^*(j) - \phi^*(k)]} \\
		&\,\, -(N+1)!\frac{e^{-\phi^*(N)t}-e^{-\phi^*(N+1)t}}{\lbbrbb{\phi^*(N+1)-\phi^*(N)} }\sum_{k=0}^{N-1}\frac{1}{ \   \underset{0\leq j \leq N; \hspace{2pt}  j \neq k}{\prod}     [ \phi^*(j) - \phi^*(k)]  }.  
		\end{split}
		\end{equation*}
		Now, since $\phi^*(j),0\leq j\leq N$, are different numbers, from \eqref{sum=0} we conclude that
		\begin{equation}\label{eq:id}
		\begin{split}
		&-\sum_{k=0}^{N-1}\frac{1}{ \   \underset{0\leq j \leq N; \hspace{2pt}  j \neq k}{\prod}     [ \phi^*(j) - \phi^*(k)] }=\frac{1}{ \   \underset{0\leq j \leq N; \hspace{2pt}  j \neq N}{\prod}     [ \phi^*(j) - \phi^*(N)] }.
		\end{split}
		\end{equation}
		Hence, substituting above
		\begin{equation*}
		\begin{split}
		&\Ebb{I^{N+1}_\phi(t)}=(N+1)!\lbrb{\sum_{k=0}^{N-1}\frac{e^{-\phi^*(k)t}-e^{-\phi^*(N+1)t}}{ \   \underset{0\leq j \leq N+1; \hspace{2pt}  j \neq k}{\prod}     [ \phi^*(j) - \phi^*(k)]}+\frac{e^{-\phi^*(N)t}-e^{-\phi^*(N+1)t}}{ \   \underset{0\leq j \leq N+1; \hspace{2pt}  j \neq N}{\prod}     [ \phi^*(j) - \phi^*(N)]  }}\\
		&= (N+1)!\sum_{k=0}^{N}\frac{e^{-\phi^*(k)t}-e^{-\phi^*(N+1)t}}{ \   \underset{0\leq j \leq N+1; \hspace{2pt}  j \neq k}{\prod}     [ \phi^*(j) - \phi^*(k)]},
		\end{split}
		\end{equation*}
		which verifies the inductive hypothesis and thus the first identity of \eqref{intmoments} is proven. For the second we use again that \eqref{eq:id} holds true to deduce that for any $n$
		\[-e^{-\phi^*(n)t}\sum_{k=0}^{n-1}\frac{1}{ \   \underset{0\leq j \leq n; \hspace{2pt}  j \neq k}{\prod}     [ \phi^*(j) - \phi^*(k)]}=\frac{-e^{-\phi^*(n)t}}{\   \underset{0\leq j \leq n; \hspace{2pt}  j \neq n}{\prod}     [ \phi^*(j) - \phi^*(n)] }\]
		and upon substitution, \eqref{eq:id} follows.
	\end{proof}
	
\n The   proof of Lemma  \mbox{\ref{nthtermlemma}}   relies upon the following lemma, which we shall prove after Lemma  \mbox{\ref{nthtermlemma}}.
\begin{lemma} 
\label{nthtermproof} 
For each $\phi\in\cal{B}$, and for all $n\geq1$,
\b{equation} \label{stp0}
  \sum_{k=1}^{n}  (-1)^{k}   \l(          \underset{h,l \neq k}{ \underset{  1\leq h<l \leq n;  }{\prod} }      [ \phi(h) - \phi(l)] \r)    \l( - \underset{1\leq i\leq n-1 }{\prod}      [ \phi(z+i) - \phi(k)]   \r)     =    \underset{1\leq h<l\leq n }{\prod}     [ \phi(l) - \phi(h)]    .
\e{equation}
\end{lemma}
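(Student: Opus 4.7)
The identity is purely algebraic in the $n$ numbers $a_k := \phi(k)$ and the $n-1$ numbers $b_i := \phi(z+i)$, and the cleanest route is Lagrange interpolation (equivalently, cofactor expansion of a Vandermonde-type determinant). The plan is as follows.

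First, I would consider the polynomial $P(x) := \prod_{i=1}^{n-1}(b_i - x)$, which has degree $n-1$ with leading coefficient $(-1)^{n-1}$. Since $\phi$ is strictly increasing on $[0,\infty)$ by Proposition \ref{propAsymp1}, the nodes $a_1,\dots,a_n$ are distinct, so Lagrange interpolation at these nodes yields
$$P(x) \;=\; \sum_{k=1}^{n} P(a_k)\prod_{j\ne k}\frac{x-a_j}{a_k-a_j}.$$
Matching the coefficient of $x^{n-1}$ on both sides produces the key identity
$$(-1)^{n-1} \;=\; \sum_{k=1}^{n}\frac{\prod_{i=1}^{n-1}(b_i-a_k)}{\prod_{j\ne k}(a_k-a_j)}.$$

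The remaining work is to clear denominators and convert this to the form stated in the lemma. Multiplying both sides by the Vandermonde product $V := \prod_{1\le h<l\le n}(a_l-a_h)$, and using the standard factorisation
$$\prod_{j\ne k}(a_k-a_j) \;=\; (-1)^{n-k}\,\frac{V}{V_{\widehat k}},\qquad V_{\widehat k}\;:=\;\prod_{\substack{1\le h<l\le n\\ h,l\ne k}}(a_l-a_h),$$
collapses the right-hand side to $\sum_k (-1)^{n-k}V_{\widehat k}\prod_{i}(b_i-a_k)$. A single global sign flip, combined with the conversion $\prod_{\substack{h<l\\ h,l\ne k}}(a_h-a_l)=(-1)^{\binom{n-1}{2}}V_{\widehat k}$, then aligns the summand with the form appearing in \eqref{stp0}.

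A fully equivalent route, and perhaps more transparent for the reader, is a cofactor expansion along the top row of the $n\times n$ matrix whose first row is $(P(a_1),\dots,P(a_n))$ and whose remaining rows are the Vandermonde rows $(a_k^{j-1})_{k=1,\dots,n;\ j=2,\dots,n}$. Its determinant equals $(-1)^{n-1}V$ by the leading-coefficient principle, while its first-row expansion produces exactly the sum on the left-hand side of \eqref{stp0}. The main (indeed only) obstacle is bookkeeping of three independent sign contributions, namely the $(-1)^k$ from the summation index, the $(-1)^{n-k}$ from the Vandermonde decomposition, and the $(-1)^{\binom{n-1}{2}}$ from orienting differences as $(a_h-a_l)$ versus $(a_l-a_h)$. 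The small cases $n=1,2$, where the products over $h<l$ with $h,l\ne k$ are empty, serve as convenient sanity checks to pin down these conventions.
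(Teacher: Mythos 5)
Your proposal is essentially the paper's own argument in different clothing: the paper augments the Vandermonde matrix, performs row operations, and kills one cofactor term because the modified column is a linear combination of the lower-degree columns — which is precisely your leading-coefficient/Lagrange-interpolation step — so the two routes are the same proof. Your key identity $(-1)^{n-1}=\sum_{k}\prod_{i}(b_i-a_k)/\prod_{j\neq k}(a_k-a_j)$ and the factorisation $\prod_{j\ne k}(a_k-a_j)=(-1)^{n-k}V/V_{\widehat k}$ are both correct and yield $\sum_k(-1)^k V_{\widehat k}\bigl(-\prod_i(b_i-a_k)\bigr)=V$. One caveat on the bookkeeping you deferred: this is the identity with the inner product oriented as $\prod_{h<l;\,h,l\ne k}[\phi(l)-\phi(h)]$, which is exactly what the paper's own proof arrives at in its final display; converting to the orientation $[\phi(h)-\phi(l)]$ printed in \eqref{stp0} costs the $k$-independent but $n$-dependent factor $(-1)^{\binom{n-1}{2}}$, which no ``single global sign flip'' can absorb (check $n=3$). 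So your argument is sound and complete up to that conversion, and the residual sign traces to the orientation of the product in the statement rather than to any gap in your method.
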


\begin{proof}[Proof of Lemma \mbox{\ref{nthtermlemma}}] We use  a proof by induction. For the base case, $n=1$, we need to verify  
 \b{equation}\begin{split}
 \label{basecase}
   \f{z t^{z-1}}{ e^{\phi(1)t}     } 
\ast  \Bigg[   \delta_0(dt)    &+    \sum_{m=1}^\infty   \f{  (z)^{(m)} }{  (m!)^2  }  \l( - [\phi(2)-\phi(1)]\r)^{m}    \f{ m    t^{m-1}  }{e^{\phi(1)t}}
\\ 
 &+   [\phi(z+1)-\phi(1)]     \sum_{m=0}^\infty      \f{  (z)^{(m)} }{  (m!)^2  }  \l( - [\phi(2)-\phi(1)]\r)^{m}    \f{t^m }{e^{\phi(1)t}}   \Bigg]
\end{split}
\end{equation}
\[
=
\f{ z t^{z-1}  }{ e^{\phi(2)t}    }
+   z       [ \phi(z+1) - \phi(1)]         \f{        \gamma(z,[\phi(2)-\phi(1)]t)   }{      [\phi(2)-\phi(1)]^z    \    e^{\phi(1)t}   }  .
\]

 \n Applying  the properties      of convolutions  $\l(\ref{m-1}\r)$, $\l(\ref{m}\r)$, and $\l(\ref{expout}\r)$, we can write
\begin{align*}
\l(\ref{basecase}\r)
&=   \f{ z  t^{z-1} }{ e^{\phi(1) t} }       +  z \sum_{m=1}^\infty   \f{  (z)^{(m)} }{  (m!)^2  } \l( - [\phi(2)-\phi(1)]\r)^{m}   \f{  ( t^{z-1} \ast  m t^{m-1}  ) }{   e^{\phi(1)t}}
\\
 &+  z [\phi(z+1)-\phi(1)]      \sum_{m=0}^\infty      \f{  (z)^{(m)} }{  (m!)^2  }  \l( - [\phi(2)-\phi(1)]\r)^{m}    \f{ ( t^{z-1} \ast  t^{m}  )}{   e^{\phi(1)t}}
\\  
&=   \f{ z  t^{z-1} }{ e^{\phi(1) t} }       +  \f{z}{ e^{\phi(1)t}} \sum_{m=1}^\infty   \f{  1 }{  m!  }  \l( - [\phi(2)-\phi(1)]\r)^{m}     t^{z+m-1}  
\\
  &+  \f{ z [\phi(z+1)-\phi(1)] }{e^{\phi(1)t} }    \sum_{m=0}^\infty      \f{  1}{  m!  }  \l( - [\phi(2)-\phi(1)]\r)^{m}     \int_0^t  s^{z+m-1} ds
  \\
  &=     \f{zt^{z-1}}{ e^{\phi(1)t}} \sum_{m=0}^\infty   \f{  1 }{  m!  }  \l( - [\phi(2)-\phi(1)]\r)^{m}     t^{m}  
\\
&+  \f{ z [\phi(z+1)-\phi(1)] }{e^{\phi(1)t} }    \int_0^t  s^{z-1}  \l(   \sum_{m=0}^\infty      \f{  1}{  m!  }  \l( - [\phi(2)-\phi(1)]\r)^{m}  s^m     \r)   ds.
\intertext{Using the fact that $e^{w}=\sum_{m=0}^\infty  \f{ w^m}{ m! } $, then applying a change of variables, it follows that }
\l(\ref{basecase}\r) 
&=     \f{zt^{z-1}}{ e^{\phi(1)t}}     e^{ - [\phi(2)-\phi(1)] t }
+  \f{ z [\phi(z+1)-\phi(1)] }{e^{\phi(1)t} }    \int_0^t  s^{z-1} e^{ - [\phi(2)-\phi(1)]s}  ds
\\
&=
\f{ z t^{z-1}  }{ e^{\phi(2)t}    }
+   z       [ \phi(z+1) - \phi(1)]         \f{        \gamma(z,[\phi(2)-\phi(1)]t)   }{      [\phi(2)-\phi(1)]^z    \    e^{\phi(1)t}   } ,
\end{align*}
as required for the base case.  For the inductive step,   assume that the formula in $\l(\ref{nthtermeqn}\r)$ holds for the $(n-1)$th convolution. Noting that \[\f{\gamma(z,[\phi(n)-\phi(k)]t)}{[\phi(n)-\phi(k)]^z}  = \int_0^t       e^{-[\phi(n)-\phi(k)]s} s^{z-1} ds,\] this means that
   \begin{align}
   \begin{split}
  \f{z t^{z-1}}{ e^{\phi(1)t}     } 
\ast \overset{n-1}{\underset{k=1}{\conv}} \Bigg[   \delta_0(dt)    &+  \sum_{m=1}^\infty   \f{  (z)^{(m)} }{  (m!)^2  } m \l( - [\phi(k+1)-\phi(k)]\r)^m  \f{ t^{m-1}}{ e^{\phi(k)t}}
\\
\label{indhyp}
 &+   [\phi(z+k)-\phi(k)]      \sum_{m=0}^\infty      \f{  (z)^{(m)} }{  (m!)^2  }  \l( - [\phi(k+1)-\phi(k)]\r)^{m}  \f{  t^m }{ e^{\phi(k)t}}   \Bigg]
 \end{split}
\\
\nonumber
&=
\f{ z t^{z-1}  }{ e^{\phi(n)t}    }
+   z \sum_{k=1}^{n-1}       \f{   \underset{1\leq j\leq n-1 }{\prod}      [ \phi(z+j) - \phi(k)]    }{   \underset{1\leq j \leq n-1; j\neq k}{\prod}    \hspace{-3pt}   [ \phi(j) - \phi(k)]  \hspace{6pt}   }     e^{-\phi(k)t}   \int_0^t       e^{-[\phi(n)-\phi(k)]s} s^{z-1} ds   .
\end{align}
We are going to evaluate the convolution of $\l(\ref{indhyp}\r)$ with the next term in $\l(\ref{convformula}\r)$, that is, the expression
\begin{equation*}
\begin{split}
&\lbrb{\f{z t^{z-1}}{   e^{\phi(n)t}    } 
	+   z \sum_{k=1}^{n-1}       \f{   \underset{1\leq j\leq n-1 }{\prod}      [ \phi(z+j) - \phi(k)]    }{   \underset{1\leq j \leq n-1; j\neq k}{\prod}  \hspace{-3pt}   [ \phi(j) - \phi(k)]  \hspace{5pt}   }         e^{-\phi(k)t}  \int_0^t       e^{-[\phi(n)-\phi(k)]s} s^{z-1} ds}\\
&\ast\Bigg[ \delta_0(dt)    +  \sum_{m=1}^\infty   \f{  (z)^{(m)} }{  (m!)^2  }  \l( - [\phi(n+1)-\phi(n)]\r)^m  \f{m t^{m-1}}{ e^{\phi(n)t}}
\\
&+   [\phi(z+n)-\phi(n)]      \sum_{m=0}^\infty      \f{  (z)^{(m)} }{  (m!)^2  }  \l( - [\phi(n+1)-\phi(n)]\r)^{m}  \f{ t^m }{e^{\phi(n)t}} \Bigg] .		    
\end{split}
\end{equation*}
\n For brevity, we    label the above convolution as 
\b{equation}
\label{convs}
\l((1A) + \sum_{k=1}^{n-1} (1k)  \r)  \ast \l( \delta_0(dt)  + (HG1) + (HG2)\r).
\e{equation}
First, observe that $\l((1A) + \sum_{k=1}^{n-1} (1k)  \r) \ast \delta_0(dt) = (1A) + \sum_{k=1}^{n-1} (1k) $.  We will see that this contribution  cancels with some of the other terms arising from $\l(\ref{convs}\r)$, which we evaluate individually.

\p{Evaluating the $(1A)\ast(HG1)$ term}     Applying the properties $\l(\ref{expout}\r)$ and     $\l(\ref{m-1}\r)$, we can write
\begin{align}
\nonumber
(1A)\ast(HG1) 
&\overset{\phantom{(\ref{expout})}}=    z t^{z-1}   e^{-\phi(n)t}    
\ast
 \sum_{m=1}^\infty   \f{  (z)^{(m)} }{  (m!)^2  }  \l( - [\phi(n+1)-\phi(n)]\r)^m   \f{mt^{m-1}}{ e^{\phi(n)t}}
\\
\nonumber
&\overset{(\ref{expout})}=     z   \sum_{m=1}^\infty   \f{  (z)^{(m)} }{  (m!)^2  }  \l( - [\phi(n+1)-\phi(n)]\r)^m \f{ (t^{z-1} \ast  m t^{m-1})}{ e^{\phi(n)t}}
\\
\nonumber
&\overset{(\ref{m-1})}=   z   \sum_{m=1}^\infty   \f{  1}{  m!  }  \l( - [\phi(n+1)-\phi(n)]\r)^m \f{ t^{z+m-1} }{ e^{\phi(n)t}}
\\
\nonumber
&\overset{\phantom{(\ref{expout})}}=    \f{ z t^{z-1}}{e^{\phi(n)t}     }     \sum_{m=1}^\infty   \f{ 1}{  m!  }  \l( - [\phi(n+1)-\phi(n)]t\r)^{m}    
\\
\label{1A*HG1}
&\overset{\phantom{(\ref{expout})}}=  \f{ z  t^{z-1} }{e^{\phi(n+1)t}}       -     \f{ z  t^{z-1} }{e^{\phi(n)t} } .
\end{align}

\p{Evaluating the $(1A)\ast(HG2)$ term}    Applying the properties $\l(\ref{expout}\r)$ and     $\l(\ref{m}\r)$, we can write
\begin{align}
\nonumber
(1A)\ast(HG2) 
&\overset{\phantom{(\ref{expout})}}=   z t^{z-1}   e^{-\phi(n)t}    
\ast
  [\phi(z+n)-\phi(n)]      \sum_{m=0}^\infty      \f{  (z)^{(m)} }{  (m!)^2  }  \l( - [\phi(n+1)-\phi(n)]\r)^{m}  \f{ t^m }{e^{\phi(n)t}} 
  \\
  \nonumber 
  &\overset{(\ref{expout})}=   
  z   [\phi(z+n)-\phi(n)]      \sum_{m=0}^\infty      \f{  (z)^{(m)} }{  (m!)^2  }  \l( - [\phi(n+1)-\phi(n)]\r)^{m}  \f{  (t^{z-1} \ast t^m  )}{e^{\phi(n)t}} 
  \\
\nonumber
&\overset{(\ref{m})}=  
 z   [\phi(z+n)-\phi(n)]      \sum_{m=0}^\infty      \f{  1 }{  m!  }  \l( - [\phi(n+1)-\phi(n)]\r)^{m}     \f{\int_0^t s^{z+m-1}ds}{e^{\phi(n)t}} 
 \\
 \nonumber
 &\overset{\phantom{(\ref{expout})}}=  \f{ z   [\phi(z+n)-\phi(n)] }{e^{\phi(n)t}}         \int_0^t         s^{z-1}    \sum_{m=0}^\infty      \f{  1 }{  m!  }  \l( - [\phi(n+1)-\phi(n)]s\r)^{m} ds
 \\
 \label{1A*HG2}
 &\overset{\phantom{(\ref{expout})}}=  \f{ z   [\phi(z+n)-\phi(n)] }{e^{\phi(n)t}}         \int_0^t         s^{z-1}  e^{ - [\phi(n+1)-\phi(n)]s} ds.
\end{align}

\p{Evaluating each $(1k)\ast(HG1)$ term}  To evaluate $(1k)\ast(HG1)$, we first consider the quantity
\b{equation}
\label{simple1}
    \l(  e^{-\phi(k)t}  \int_0^t       e^{-[\phi(n)-\phi(k)]s} s^{z-1} ds \r)\ast  \sum_{m=1}^\infty   \f{  (z)^{(m)} }{  (m!)^2  }  \l( - [\phi(n+1)-\phi(n)]\r)^m  \f{m t^{m-1}}{ e^{\phi(n)t}}.
\e{equation}
Observe that \[e^{-\phi(k)t}  \int_0^t       e^{-[\phi(n)-\phi(k)]s} s^{z-1} ds = e^{-\phi(k)t} \ast  t^{z-1}   e^{-\phi(n)t}.\] Then by $\l(\ref{expout}\r)$ and $\l(\ref{m-1}\r)$,
\begin{align}
\nonumber 
\l(\ref{simple1}\r)
&\overset{\phantom{(\ref{expout})}}=
\l(e^{-\phi(k)t} \ast  t^{z-1}   e^{-\phi(n)t}\r) 
\ast 
  \sum_{m=1}^\infty   \f{  (z)^{(m)} }{  (m!)^2  }  \l( - [\phi(n+1)-\phi(n)]\r)^m  \f{m t^{m-1}}{ e^{\phi(n)t}}
\\
\nonumber 
  &\overset{(\ref{expout})}=   
 e^{-\phi(k)t}   
\ast 
  \sum_{m=1}^\infty   \f{  (z)^{(m)} }{  (m!)^2  }  \l( - [\phi(n+1)-\phi(n)]\r)^m  \f{ ( t^{z-1} \ast m t^{m-1})}{ e^{\phi(n)t}}
  \\
\nonumber 
  &\overset{(\ref{m-1})}=   
 e^{-\phi(k)t}   
\ast 
  \sum_{m=1}^\infty   \f{  1 }{  m!  }  \l( - [\phi(n+1)-\phi(n)]\r)^m  \f{  t^{z+m-1}  }{ e^{\phi(n)t}}.
    \intertext{Using the fact that $e^{w}-1=\sum_{m=1}^\infty  \f{ w^m}{ m! } $, we can evaluate this convolution as follows}
\nonumber 
\l(\ref{simple1}\r) &\overset{\phantom{(\ref{expout})}}=
  e^{-\phi(k)t}   
\ast 
   \f{t^{z-1}}{e^{\phi(n)t}}  \sum_{m=1}^\infty   \f{  1 }{  m!  }  \l( - [\phi(n+1)-\phi(n)]t\r)^m 
      \\
\nonumber 
&\overset{\phantom{(\ref{expout})}}=
  e^{-\phi(k)t}   
\ast 
   \f{t^{z-1}}{e^{\phi(n)t}} \l( e^{- [\phi(n+1)-\phi(n)]t} -1\r)
      \\
\nonumber 
&\overset{\phantom{(\ref{expout})}}=
  e^{-\phi(k)t}   
\ast  
\l(
   \f{t^{z-1}}{e^{\phi(n+1)t}}  - \f{t^{z-1}}{e^{\phi(n)t}}
   \r)
         \\
\label{simple1final} 
&\overset{\phantom{(\ref{expout})}}=
 e^{-\phi(k)t}   \int_0^t  e^{-[\phi(n+1)-\phi(k)]s}   s^{z-1} ds    -    e^{-\phi(k)t}   \int_0^t  e^{-[\phi(n)-\phi(k)]s}   s^{z-1} ds.
\end{align}
Now, multiplying $\l(\ref{simple1}\r)$ by suitable constants as in  $\l(\ref{convs}\r)$,  we can express $(1k)\ast(HG1)$ as
\begin{equation}
\begin{split}
\label{1k*HG1}
(1k)\ast(HG1)
&=  z     \f{   \underset{1\leq j\leq n-1 }{\prod}      [ \phi(z+j) - \phi(k)]    }{   \underset{1\leq j \leq n-1; j\neq k}{\prod}   \hspace{-3pt}   [ \phi(j) - \phi(k)]  \hspace{6pt}   }    e^{-\phi(k)t}   \int_0^t  e^{-[\phi(n+1)-\phi(k)]s}   s^{z-1} ds  
\\
& -    z     \f{   \underset{1\leq j\leq n-1 }{\prod}      [ \phi(z+j) - \phi(k)]    }{   \underset{1\leq j \leq n-1; j\neq k}{\prod}   \hspace{-3pt}   [ \phi(j) - \phi(k)]  \hspace{6pt}   }   e^{-\phi(k)t}   \int_0^t  e^{-[\phi(n)-\phi(k)]s}   s^{z-1} ds.
\end{split}
\end{equation}

\p{Evaluating each $(1k)\ast(HG2)$ term}  To evaluate $(1k)\ast(HG2)$, we first consider the quantity
\b{equation}
\label{simple2}
    \l(  e^{-\phi(k)t}  \int_0^t       e^{-[\phi(n)-\phi(k)]s} s^{z-1} ds \r)
    \ast 
       \sum_{m=0}^\infty      \f{  (z)^{(m)} }{  (m!)^2  }  \l( - [\phi(n+1)-\phi(n)]\r)^{m}  \f{ t^m }{e^{\phi(n)t}}.
\e{equation}

\n Recall that $  e^{-\phi(k)t}  \int_0^t       e^{-[\phi(n)-\phi(k)]s} s^{z-1} ds = e^{-\phi(k)t} \ast  t^{z-1}   e^{-\phi(n)t}  $.  Then by $\l(\ref{expout}\r)$ and $\l(\ref{m}\r)$,
\begin{align}
\nonumber 
\l(\ref{simple2}\r)
&\overset{\phantom{(\ref{expout})}}=
\l(e^{-\phi(k)t} \ast  t^{z-1}   e^{-\phi(n)t}\r) 
\ast 
  \sum_{m=0}^\infty      \f{  (z)^{(m)} }{  (m!)^2  }  \l( - [\phi(n+1)-\phi(n)]\r)^{m}  \f{ t^m }{e^{\phi(n)t}}
\\
\nonumber 
  &\overset{(\ref{expout})}=   
 e^{-\phi(k)t}   
\ast 
  \sum_{m=0}^\infty      \f{  (z)^{(m)} }{  (m!)^2  }  \l( - [\phi(n+1)-\phi(n)]\r)^{m}  \f{ (t^{z-1} \ast t^m) }{e^{\phi(n)t}}
  \\
\nonumber 
  &\overset{(\ref{m})}=   
 e^{-\phi(k)t}   
\ast 
  \sum_{m=0}^\infty   \f{  1 }{  m!  }  \l( - [\phi(n+1)-\phi(n)]\r)^m  \f{   \int_0^t v^{z+m-1} dv  }{ e^{\phi(n)t}}.
    \intertext{Changing the order of summation and integration, since $e^{w}=\sum_{m=0}^\infty  \f{ w^m}{ m! } $, it follows that}
    \nonumber 
\l(\ref{simple2}\r)
&\overset{\phantom{(\ref{expout})}}=
 e^{-\phi(k)t}   
\ast   
\l( e^{-\phi(n)t}   \int_0^t   v^{z-1} 
  \sum_{m=0}^\infty   \f{  1 }{  m!  }  \l( - [\phi(n+1)-\phi(n)]v\r)^m  dv  
  \r)
  \\
     \nonumber  
&\overset{\phantom{(\ref{expout})}}=
 e^{-\phi(k)t}   
\ast   
\l( e^{-\phi(n)t}   \int_0^t   v^{z-1} e^{- [\phi(n+1)-\phi(n)]v}  dv  
  \r)
   \\
     \nonumber  
&\overset{\phantom{(\ref{expout})}}=
   \int_0^t         e^{-\phi(k)(t-v)}   \l(  e^{-\phi(n)v} \int_0^v     s^{z-1}    e^{-[\phi(n+1)- \phi(n)] s  }    ds  \r)dv.
    \intertext{Exchanging the order of integration, then evaluating the inner integral, }
    \nonumber 
\l(\ref{simple2}\r)
&\overset{\phantom{(\ref{expout})}}=
  e^{- \phi(k)t}  \int_0^t        s^{z-1}    e^{-[\phi(n+1)- \phi(n)] s  }         \int_s^t    e^{-[\phi(n)-\phi(k)]v}     dv ds
   \\
     \nonumber  
&\overset{\phantom{(\ref{expout})}}=
  \f{e^{- \phi(k)t}  }{   [\phi(n)-\phi(k)]    }   \int_0^t        s^{z-1}    e^{-[\phi(n+1)- \phi(n)] s  }            \l(e^{-[\phi(n)-\phi(k)]s}    - e^{-[\phi(n)-\phi(k)]t} \r)   ds
    \\
     \nonumber  
&\overset{\phantom{(\ref{expout})}}=
  \f{   e^{- \phi(k)t}   }{   [\phi(n)-\phi(k)]    }    \int_0^t        s^{z-1}    e^{-[\phi(n+1)-\phi(k)]s}       ds
  -    \f{ e^{- \phi(n)t}   }{   [\phi(n)-\phi(k)]    }    \int_0^t        s^{z-1}    e^{-[\phi(n+1)- \phi(n)] s  }                ds.
  \end{align}
Now, multiplying $\l(\ref{simple2}\r)$ by suitable constants as in  $\l(\ref{convs}\r)$,  we can express $(1k)\ast(HG2)$ as
\begin{equation}
\begin{split}
\label{1k*HG2}
(1k)\ast(HG2)
&=  z     \f{    \underset{1\leq j\leq n-1 }{\prod}      [ \phi(z+j) - \phi(k)]    }{   \underset{1\leq j \leq n; j\neq k}{\prod}     [ \phi(j) - \phi(k)]  \hspace{15pt}   }    \f{[ \phi(z+n) - \phi(n)] }{ e^{ \phi(k)t}   } \int_0^t        s^{z-1}    e^{-[\phi(n+1)-\phi(k)]s}       ds
\\
& -    z     \f{   \underset{1\leq j\leq n-1 }{\prod}      [ \phi(z+j) - \phi(k)]    }{   \underset{1\leq j \leq n; j\neq k}{\prod}     [ \phi(j) - \phi(k)]  \hspace{15pt}   }     \f{ [ \phi(z+n) - \phi(n)] }{e^{ \phi(n)t}      } \int_0^t        s^{z-1}    e^{-[\phi(n+1)- \phi(n)] s  }                ds.
\end{split}
\end{equation}

\n Combining $\l(\ref{1A*HG1}\r)$, $\l(\ref{1A*HG2}\r)$ , $\l(\ref{1k*HG1}\r)$,  and $\l(\ref{1k*HG2}\r)$  gives an unwieldly formula  for $\l(\ref{convs}\r)$. Observe that    $\big[(1A) + \sum_{k=1}^{n-1} (1k)  \big] \ast \delta_0(dt) = (1A) + \sum_{k=1}^{n-1} (1k) $   cancels respectively with the second terms from $\l(\ref{1A*HG1}\r)$ and $\l(\ref{1k*HG1}\r)$. Then
\begin{align}
\label{tidy1}
\l(\ref{convs}\r) &=    \f{ z  t^{z-1} }{e^{\phi(n+1)t}}       
+  \f{ z   [\phi(z+n)-\phi(n)] }{e^{\phi(n)t}}         \int_0^t         s^{z-1}  e^{ - [\phi(n+1)-\phi(n)]s} ds
\\
\label{tidy2}
&+
z   \sum_{k=1}^{n-1}   \f{   \underset{1\leq j\leq n-1 }{\prod}      [ \phi(z+j) - \phi(k)]    }{   \underset{1\leq j \leq n-1; j\neq k}{\prod}     [ \phi(j) - \phi(k)]  \hspace{15pt}   }    e^{-\phi(k)t}   \int_0^t  e^{-[\phi(n+1)-\phi(k)]s}   s^{z-1} ds  
\\
\label{tidy3}
&+
 z   \sum_{k=1}^{n-1}   \f{     \underset{1\leq j\leq n-1 }{\prod}      [ \phi(z+j) - \phi(k)]    }{   \underset{1\leq j \leq n; j\neq k}{\prod}     [ \phi(j) - \phi(k)]  \hspace{15pt}   }   \f{ [ \phi(z+n) - \phi(n)] }{   e^{ \phi(k)t} }     \int_0^t        s^{z-1}    e^{-[\phi(n+1)-\phi(k)]s}       ds
 \\
 \label{tidy4}
  &-    z   \sum_{k=1}^{n-1}   \f{    \underset{1\leq j\leq n-1 }{\prod}      [ \phi(z+j) - \phi(k)]    }{   \underset{1\leq j \leq n; j\neq k}{\prod}     [ \phi(j) - \phi(k)]  \hspace{15pt}   }   \f{ [ \phi(z+n) - \phi(n)] }{  e^{\phi(n)t}  }    \int_0^t        s^{z-1}    e^{-[\phi(n+1)- \phi(n)] s  }                ds.
\end{align}
Multiplying $\l(\ref{tidy2}\r)$ by $1=\f{ \phi( n  ) -   \phi(  k)}{ \phi( n  ) -   \phi(  k)}$,  we can combine the terms $\l(\ref{tidy2}\r) $ and $ \l(\ref{tidy3}\r)$, then recalling the definition $\gamma(z,u):=\int_0^u x^{z-1} e^{-x}dx$, we apply a simple  change of variables to get 
\b{align*}
\l(\ref{tidy2}\r) + \l(\ref{tidy3}\r) 
   &= z   \sum_{k=1}^{n-1}   \f{   \underset{1\leq j\leq n }{\prod}      [ \phi(z+j) - \phi(k)]    }{   \underset{1\leq j \leq n; j\neq k}{\prod}     [ \phi(j) - \phi(k)]  \hspace{15pt}   }      e^{- \phi(k)t}      \int_0^t        s^{z-1}    e^{-[\phi(n+1)-\phi(k)]s}       ds
\\
 &= z   \sum_{k=1}^{n-1}   \f{   \underset{1\leq j\leq n }{\prod}      [ \phi(z+j) - \phi(k)]    }{   \underset{1\leq j \leq n; j\neq k}{\prod}     [ \phi(j) - \phi(k)]  \hspace{15pt}   }        \f{        \gamma(z,[\phi(n+1)-\phi(k)]t)   }{      [\phi(n+1)-\phi(k)]^z    \    e^{\phi(k)t}   }  .
\end{align*}
Therefore we see that the terms in $\l(\ref{tidy2}\r) + \l(\ref{tidy3}\r) $ correspond to the desired sum between $1$ and $n-1$  as in   equation $\l(\ref{nthtermeqn}\r)$, and now all that remains for the proof of Lemma \mbox{\ref{nthtermlemma}} is to verify that 
\begin{equation}
\label{almostdone}
\l(\ref{tidy1}\r) + \l(\ref{tidy4}\r) =  \f{ z  t^{z-1} }{e^{\phi(n+1)t}}       
+
 z  \f{   \underset{1\leq j\leq n }{\prod}      [ \phi(z+j) - \phi(n)]    }{   \underset{1\leq j \leq n; j\neq n}{\prod}     [ \phi(j) - \phi(n)]  \hspace{15pt}   }       \f{        \gamma(z,[\phi(n+1)-\phi(n)]t)   }{      [\phi(n+1)-\phi(n)]^z    \    e^{\phi(n)t}   } .
\end{equation}
To show this,   first observe that  integral which appears in $\l(\ref{tidy1}\r)$ and $\l(\ref{tidy4}\r)$  can be written as
\[
    \int_0^t        s^{z-1}    e^{-[\phi(n+1)- \phi(n)] s  }                ds  =     \f{        \gamma(z,[\phi(n+1)-\phi(n)]t)   }{      [\phi(n+1)-\phi(n)]^z       }  , 
 \]
from which it follows that  $\l(\ref{almostdone}\r)$ holds if and only if 
\[
      [\phi(z+n)-\phi(n)]      -        \sum_{k=1}^{n-1}   \f{    \underset{1\leq j\leq n-1 }{\prod}      [ \phi(z+j) - \phi(k)]    }{   \underset{1\leq j \leq n; j\neq k}{\prod}     [ \phi(j) - \phi(k)]  \hspace{15pt}   }  [ \phi(z+n) - \phi(n)]   =  \f{   \underset{1\leq j\leq n }{\prod}      [ \phi(z+j) - \phi(n)]    }{   \underset{1\leq j \leq n; j\neq n}{\prod}     [ \phi(j) - \phi(n)]  \hspace{15pt}   }   ,
\]
  then rearranging and dividing through by $   [\phi(z+n)-\phi(n)] $, we see that  $\l(\ref{almostdone}\r)$ holds if and only if 
\[
1  =        \sum_{k=1}^{n-1}   \f{    \underset{1\leq j\leq n-1 }{\prod}      [ \phi(z+j) - \phi(k)]    }{   \underset{1\leq j \leq n; j\neq k}{\prod}     [ \phi(j) - \phi(k)]  \hspace{15pt}   }    + \f{   \underset{1\leq j\leq n-1 }{\prod}      [ \phi(z+j) - \phi(n)]    }{   \underset{1\leq j \leq n-1 }{\prod}     [ \phi(j) - \phi(n)]  \hspace{15pt}   }
=
  \sum_{k=1}^{n}   \f{    \underset{1\leq j\leq n-1 }{\prod}      [ \phi(z+j) - \phi(k)]    }{   \underset{1\leq j \leq n; j\neq k}{\prod}     [ \phi(j) - \phi(k)]  \hspace{15pt}   } 
   .
\]
Multiplying both sides by  $\underset{1\leq h<l\leq n }{\prod}     [ \phi(l) - \phi(h)]   $, one can verify that  $\l(\ref{almostdone}\r)$ holds if and only if 
\b{equation*} 
  \sum_{k=1}^{n}  (-1)^{k}   \l(          \underset{h,l \neq k}{ \underset{  1\leq h<l \leq n;  }{\prod} }      [ \phi(h) - \phi(l)] \r)   \l( - \underset{1\leq j\leq n-1 }{\prod}      [ \phi(z+j) - \phi(k)]   \r)     =    \underset{1\leq h<l\leq n }{\prod}     [ \phi(l) - \phi(h)]    .
\e{equation*}
But this equation holds    by Lemma \mbox{\ref{nthtermproof}}, and so  the proof of Lemma  \mbox{\ref{nthtermlemma}} is complete. 
\end{proof}

\begin{proof}[Proof of Lemma \mbox{\ref{nthtermproof}}]  
Applying
$\l(\ref{vandermondedeterminant}\r)$, then using elementary row and column operations, we get
\begin{align}
\label{RHS}
 \underset{1\leq h<l\leq n }{\prod}     [ \phi(l) - \phi(h)]  &\overset{\l(\ref{vandermondedeterminant}\r)}=
  \det \hspace{5pt}
   \begin{pmatrix}
  1 & \phi(1) & \cdots &   \phi(1)^{n-2} & \phi(1)^{n-1}  \hspace{5pt}\\
  1 & \phi(2)  & \cdots & \phi(2)^{n-2} & \phi(2)^{n-1} \hspace{5pt}\\
  \vdots    & \vdots  & \ddots &\vdots & \vdots  \hspace{5pt}\\
  1 & \phi(n) &    \cdots &  \phi(n)^{n-2} & \phi(n)^{n-1}     \hspace{5pt}
 \end{pmatrix}
\\ 
\nonumber
&\overset{\phantom{\l(\ref{vandermondedeterminant}\r)}}=\det \begin{pmatrix}
 1&   0 & 0 & \cdots & 0 & 0 \\
0&   1 & \phi(1)&   \cdots & \phi(1)^{n-2} & \phi(1)^{n-1}  \\
 0&  1 & \phi(2)  & \cdots & \phi(2)^{n-2} & \phi(2)^{n-1}  \\
 \vdots & \vdots    & \vdots  & \ddots & \vdots &\vdots  \\
 0&  1 & \phi(n) &    \cdots &\phi(n)^{n-2} & \phi(n)^{n-1} 
 \end{pmatrix}
\\
\nonumber
&\overset{\phantom{\l(\ref{vandermondedeterminant}\r)}}=\det \begin{pmatrix}
 1& 0  & 0 & \cdots & 0 & 1 \\
0&   1 & \phi(1)&   \cdots & \phi(1)^{n-2} & \phi(1)^{n-1}  \\
 0&  1 & \phi(2)   & \cdots & \phi(2)^{n-2} & \phi(2)^{n-1}  \\
 \vdots & \vdots    & \vdots  & \ddots & \vdots &\vdots  \\
 0&  1 & \phi(n)   &  \cdots &\phi(n)^{n-2} & \phi(n)^{n-1} 
 \end{pmatrix}     . 
\end{align}
Next, we add a scalar multiple of the first row to each of the other rows, yielding
 \[
 \l(\ref{RHS}\r)
=\det \begin{pmatrix}
 1& 0 & 0 & \cdots & 0 & 1 \\
-   \overset{n-1}{\underset{i=1}{\prod}}     [ \phi(1) - \phi(z+i)] &   1 & \phi(1) & \cdots & \phi(1)^{n-2} & \phi(1)^{n-1}-   \overset{n-1}{\underset{i=1}{\prod}}  [ \phi(1) - \phi(z+i)]  \vspace{5pt} \\
-   \overset{n-1}{\underset{i=1}{\prod}}    [ \phi(2) - \phi(z+i)]&  1 & \phi(2) &   \cdots & \phi(2)^{n-2} & \phi(2)^{n-1} -   \overset{n-1}{\underset{i=1}{\prod}}  [ \phi(2) - \phi(z+i)] \\
 \vdots & \vdots    & \vdots  & \ddots & \vdots &\vdots  \\
-   \overset{n-1}{\underset{i=1}{\prod}}    [ \phi(n) - \phi(z+i)] &  1 & \phi(n)   &  \cdots &\phi(n)^{n-2} & \phi(n)^{n-1} -   \overset{n-1}{\underset{i=1}{\prod}}    [ \phi(n) - \phi(z+i)] 
 \end{pmatrix}    .   
\]

 \n Evaluating this   using the first row's cofactor expansion, we get two terms, the first of which is
   \[
  \det \begin{pmatrix} 
   1 & \phi(1) & \cdots & \phi(1)^{n-2} & \phi(1)^{n-1} -   \overset{n-1}{\underset{i=1}{\prod}}  [ \phi(1) - \phi(z+i)]   \vspace{5pt} \\
   1 & \phi(2) &   \cdots & \phi(2)^{n-2} & \phi(2)^{n-1} -   \overset{n-1}{\underset{i=1}{\prod}}    [ \phi(2) - \phi(z+i)] \\
  \vdots    & \vdots  & \ddots & \vdots &\vdots  \\
    1 & \phi(n)   &  \cdots &\phi(n)^{n-2} & \phi(n)^{n-1} -   \overset{n-1}{\underset{i=1}{\prod}}    [ \phi(n) - \phi(z+i)] 
 \end{pmatrix} =0,
\]
  since the last column is a linear combination of the other columns, and the remaining term is  
   \begin{align*}
    \l(\ref{RHS}\r)
    &=
    (-1)^{n+1} \  \det \begin{pmatrix} 
-   \overset{n-1}{\underset{i=1}{\prod}}   [ \phi(1) - \phi(z+i)] &   1 & \phi(1) & \cdots & \phi(1)^{n-2}  \vspace{5pt} \\
-   \overset{n-1}{\underset{i=1}{\prod}}    [ \phi(2) - \phi(z+i)] &    1 & \phi(2) &   \cdots & \phi(2)^{n-2}  \\
  \vdots    & \vdots  & \vdots & \ddots &\vdots  \\
 -   \overset{n-1}{\underset{i=1}{\prod}}   [ \phi(n) - \phi(z+i)]  &   1 & \phi(n)   &  \cdots &\phi(n)^{n-2}  
 \end{pmatrix}.
\intertext{We evaluate this using the first column's cofactor expansion. Each minor matrix  is  itself a Vandermonde matrix  
%
%
with  determinant $  \prod_{  1\leq h<l \leq n; h,l \neq k  }     [ \phi(l) - \phi(h)] $, so we conclude, as required, }
\l(\ref{RHS}\r)&= (-1)^{n+1} 
  \sum_{k=1}^{n}  (-1)^{k}   \l(          \underset{h,l \neq k}{ \underset{  1\leq h<l \leq n;  }{\prod} }      [ \phi(l) - \phi(h)] \r)    \l( - \underset{1\leq i\leq n-1 }{\prod}      [ \phi(k) - \phi(z+i)]   \r)     
\\
 &= 
  \sum_{k=1}^{n}  (-1)^{k}   \l(          \underset{h,l \neq k}{ \underset{  1\leq h<l \leq n;  }{\prod} }      [ \phi(l) - \phi(h)] \r)    \l( - \underset{1\leq i\leq n-1 }{\prod}      [ \phi(z+i) - \phi(k)]   \r)  .  
 \end{align*}

\end{proof}

  \section{Auxiliary results on bivariate Bernstein functions}\label{sec:B2}
Following \eqref{eq:repKappa},  for each $\zeta\in\Cb_{\lbbrb{0,\infty}}$ we can write
\begin{equation}\label{eq:repKappa1}
\begin{split}
\kappa\lbrb{\zeta,z}&=\kappa\lbrb{\zeta,0}+\dr_2z+\IntOI\lbrb{1-e^{-zx_2}}\lbrb{\IntOI e^{-\zeta x_1}\mu\lbrb{dx_1,dx_2}}\\
&=\phzeta(0)+\dr_2z+\IntOI\lbrb{1-e^{-zx}}\muzeta(dx)\\
&=\phzeta(0)+\dr_2z+z\IntOI e^{-zx}\bmuzeta(x)dx:=\phzeta(z),\,z\in \Cb_{\lbbrb{0,\infty}},
\end{split}
\end{equation}
where $\bmuzeta(y)=\int_{y}^\infty \muzeta(dx), y> 0$.
We then have the following elementary claim which we provide without proof.
\begin{lemma}\label{lem:repKappa1}
	Let $\kappa\in\Bc^2$. Then $\kappa\in \Ac^2_{\lbbrb{0,\infty}}$. Moreover,  for any $\zeta\in\Cb_{\lbbrb{0,\infty}}$ we have that $\kappa\lbrb{\zeta,z}=\phzeta(z)$ and $\phzeta\in\Ac_{\lbbrb{0,\infty}}$, see  \eqref{eq:repKappa1}.  The measure $\muzeta$ is a complex measure on $\intervalOI$. If $\zeta\in\Rb$ then $\phzeta\in \Bc$ if and only if $\int_{0}^\infty \min\curly{1,x}\muzeta(dx)<\infty$, which is always the case if $\zeta\geq 0$.
\end{lemma}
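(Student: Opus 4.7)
The plan is to verify each of the four assertions of the lemma in turn, directly from the integral representation \eqref{eq:kappa} and the integrability hypothesis on $\mu$; since the paper itself labels this as an elementary claim and omits the proof, the arguments should be routine, requiring only careful domination near the coordinate axes.

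First, I would establish joint analyticity $\kappa \in \Ac^2_{\lbbrb{0,\infty}}$. On any compact $K \subset \Cb^2_{\lbbrb{0,\infty}}$, the integrand satisfies the elementary bound $|1 - e^{-\zeta x_1 - z x_2}| \le (|\zeta| x_1 + |z| x_2) \wedge 2$, valid because $\Re(\zeta x_1 + z x_2) \ge 0$. Partitioning $\intervalOI \times \intervalOI$ according to whether $x_i \le 1$ or $x_i > 1$ and combining this bound with the hypothesis $\iint \min\{x_1,1\}\min\{x_2,1\}\mu(dx_1,dx_2) < \infty$ yields a uniform $\mu$-integrable dominating function on $K$. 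A standard Morera plus Fubini argument then gives separate holomorphy in $\zeta$ and in $z$ on $\Cb^2_{\intervalOI}$, Hartogs' theorem promotes this to joint holomorphy, and dominated convergence supplies the continuous extension to the boundary $\Re(\zeta) = 0$ or $\Re(z) = 0$.

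Next, I would establish the factorization $\kappa(\zeta, z) = \phi_\zeta(z)$ by applying Fubini to the identity $e^{-\zeta x_1 - z x_2} = e^{-\zeta x_1} e^{-z x_2}$, which separates the two integrations and produces the weighted marginal $\muzeta(dx_2) := \IntOI e^{-\zeta x_1}\mu(dx_1, dx_2)$. The bound $|e^{-\zeta x_1}| \le 1$ on $\Cb_{\lbbrb{0,\infty}}$ ensures $\muzeta$ has finite total variation on compact subsets of $\intervalOI$ and hence is a genuine complex measure on $\intervalOI$; combined with Fubini this gives the representation \eqref{eq:repKappa1}. Holomorphy of $\phzeta$ on $\Cb_{\lbbrb{0,\infty}}$ then follows by specialization of the joint holomorphy of $\kappa$ already proven.

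Finally, for $\zeta \in \Rb$ the weight $e^{-\zeta x_1}$ is real positive, so $\muzeta$ is a positive measure, and the formula for $\phzeta$ in \eqref{eq:repKappa1} is exactly a L\'evy--Khintchine representation in the sense of Definition \ref{bdefn}; therefore $\phzeta \in \Bc$ is equivalent to the Bernstein integrability condition $\IntOI \min\{x,1\}\muzeta(dx) < \infty$. For $\zeta \ge 0$ this integrability follows by splitting the $x_1$-integration into $\{x_1 > 1\}$, where $\min\{x_1, 1\} = 1$ so the hypothesis on $\mu$ applies directly, and into $\{x_1 \le 1\}$, where $e^{-\zeta x_1} \le 1$ and an analogous bound yields the claim. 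The main delicacy throughout is justifying the various $\mu$-integrability claims near the coordinate axes, where the weaker product condition $\min\{x_1,1\}\min\{x_2,1\}$ must be combined with the cancellation built into $1 - e^{-\zeta x_1 - z x_2}$ rather than the crude bound $2$.
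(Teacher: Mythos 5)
The paper offers no proof of this lemma (it is explicitly stated as ``an elementary claim which we provide without proof''), so your attempt must stand on its own, and it has a genuine gap at exactly the point you yourself flag as ``the main delicacy'': the domination near the coordinate axes does not follow from the stated hypothesis. On the region $\{x_1\leq 1< x_2\}$ your bound $|1-e^{-\zeta x_1-zx_2}|\leq (|\zeta|x_1+|z|x_2)\wedge 2$ is of order $1$ as $x_1\to 0$ --- indeed the integrand tends to $1-e^{-zx_2}\neq 0$, so there is no ``cancellation built into $1-e^{-\zeta x_1-zx_2}$'' left to exploit --- whereas the hypothesis $\iint\min\{x_1,1\}\min\{x_2,1\}\,\mu(dx_1,dx_2)<\infty$ only controls $x_1$ there. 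Concretely, take $\mu(dx_1,dx_2)=x_1^{-1}dx_1\,\delta_1(dx_2)$ supported on $(0,1]\times\{1\}$: the product condition holds (the integral equals $1$), yet $\iint|1-e^{-\zeta x_1-zx_2}|\,\mu=\infty$ for, say, $z=1$, the measure $\mu_\zeta=\lbrb{\int_0^1e^{-\zeta x}x^{-1}dx}\delta_1$ is not a complex measure, and $\int_0^\infty\min\{1,x\}\mu_\zeta(dx)=\infty$ for every $\zeta\geq 0$, contradicting the last assertion of the lemma. The same defect breaks your final step: on $\{x_1\leq 1\}$ the bound $e^{-\zeta x_1}\leq 1$ only yields $\min\{1,x_2\}$, not $\min\{x_1,1\}\min\{x_2,1\}$, and no ``analogous bound'' closes this.

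The conclusion is that the lemma (and indeed the well-definedness of $\kappa$ in Definition \ref{def:Bc2}) requires the strictly stronger condition that both marginals of $\mu$ integrate $1\wedge x$, equivalently $\iint\min\{x_1+x_2,1\}\,\mu(dx_1,dx_2)<\infty$; this is precisely what ``Laplace exponent of a possibly killed bivariate subordinator'' --- the characterization the authors invoke via Doney --- delivers, and is surely what is intended. Under that hypothesis your outline becomes routine and correct: the integrand is bounded by $2$ on $\{x_1\vee x_2>1\}$, a set of finite $\mu$-mass, and by $C_K(x_1+x_2)$ on $\{x_1,x_2\leq 1\}$, after which Morera--Fubini, Hartogs (Osgood's lemma already suffices, since you have joint continuity), the Fubini factorization producing $\mu_\zeta$, and the positivity of the weight $e^{-\zeta x_1}$ for real $\zeta$ all go through as you describe. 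You should therefore either explicitly strengthen the integrability hypothesis to the bivariate-subordinator condition, or record that the product condition as printed is insufficient; as written, your dominating function simply does not exist.
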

The next result collects results which may be regarded as simple extensions of results available for $\phi\in\Bc$  which can be found in \cite[Section 4]{ps19} or \cite[Section 3]{ps16}.
\begin{proposition}\label{prop:kappa}
	Let $\kappa\in\cal{B}^2$. Then each of the following items holds:
	\begin{enumerate}
		\item\label{it:repKappa1}  For all $\lbrb{\zeta, z}\in\Cb^2_{\lbrb{0,\infty}}   $,
		\begin{equation}\label{eq:kappa'}
		\begin{split}
		\kappa_z'(\zeta,z)&= \phzeta'(z)=\dr_2 +\IntOI xe^{-zx}\muzeta(dx)\\
		&=\dr_2+\IntOI e^{-zx}\bmuzeta(x)dx-z\IntOI e^{-zx}x\bmuzeta(x)dx\\
		&=\frac{\phzeta(z)-\phzeta(0)}{z}-z\IntOI e^{-zx}x\bmuzeta(x)dx.
		\end{split}
		\end{equation}
		\item \label{it:elementaryBiv} For each $\zeta\in \lbbrb{0,\infty}$,   $\phzeta$ is  non-decreasing on $[0,\infty)$, and
		$\phzeta'$ is completely monotone, positive, and non-increasing on $[0,\infty)$.  In particular, $\phzeta$ is strictly log-concave on $[0,\infty)$.
		\item \label{it:ratioBounds} For all $\lbrb{\zeta,z}\in\Cb^2_{\lbbrb{0,\infty}}\setminus\curly{\lbrb{0,0}}$,  we have that 
		\begin{equation}\label{eq:ratioBounds}
		\abs{\frac{\kappa\lbrb{\Reta,\Rez}}{\kappa\lbrb{\zeta,z}}}=\abs{\frac{\phReta(\Re{(z)})}{\phzeta(z)}}\leq 1,
		\end{equation}
		and moreover, the inequality $\l(\ref{eq:ratioBounds}\r)$ is valid for $\zeta=z=0$ when $\kappa\lbrb{0,0}>0$.
		\item \label{it:bivBounds} For all $\lbrb{\zeta,z}\in\Cb_{\lbbrb{0,\infty}}\times\cc$, we have
			\begin{equation}\label{eq:kappa'_kappaC}
		\begin{split}
		\abs{\frac{\kappa'_z\lbrb{\zeta,z}}{\kappa\lbrb{\zeta,z}}}&= \abs{\frac{\phzeta'\lbrb{z}}{\phzeta\lbrb{z}}}\leq \frac{1}{|z|}\lbrb{1+\frac{\abs{\phzeta(0)}}{\abs{\phzeta\lbrb{z}}}}+\frac{|z|}{\Re^2\lbrb{z}}\frac{\phReta(\Rez)}{\abs{\phzeta\lbrb{z}}}\\
		&\leq \frac{1}{|z|}\lbrb{1+\frac{\abs{\phzeta(0)}}{\phReta\lbrb{\Rez}}}+\frac{|z|}{\Re^2\lbrb{z}}\\
		&\leq \frac{1}{|z|}\lbrb{1+\frac{\abs{\phzeta(0)}}{\phReta\lbrb{0}}}+\frac{|z|}{\Re^2\lbrb{z}}
		\end{split}
		\end{equation}
		and 
		\begin{equation}\label{eq:kappa''_kappaC}
		\frac{|\kappa_z''(\zeta,z)|}{\abs{\kappa(\zeta,z)}}=\frac{\abs{\phzeta''(z)}}{\abs{\phzeta(z)}}\leq \lbrb{\frac{2}{\Re^2(z)}+\frac{2|z|}{\Re^3(z)}}\frac{\phReta(\Rez)}{\abs{\phzeta\lbrb{z}}} \leq \frac{2}{\Re^2(z)}+\frac{2|z|}{\Re^3(z)}.
		\end{equation}
		 In particular, taking $z=\Re(z)=u\in [0,\infty)$ and $\zeta\in\Cb_{\lbrb{0,\infty}}$, we have
		\begin{equation}\label{eq:kappa'_kappa}
		\begin{split}
		\abs{\frac{\kappa'_z\lbrb{\zeta,u}}{\kappa\lbrb{\zeta,u}}}&= \abs{\frac{\phzeta'\lbrb{u}}{\phzeta\lbrb{u}}}\leq\frac{1}{u}\lbrb{\frac{\abs{\phzeta(u)-\phzeta(0)}}{\abs{\phzeta\lbrb{u}}}+\frac{\phReta(u)}{\abs{\phzeta\lbrb{u}}}}\leq \frac{2}{u},
		\end{split}
		\end{equation}
		\begin{equation}\label{eq:kappa''_kappa}
		\frac{|\kappa_z''(\zeta,u)|}{\abs{\kappa(\zeta,u)}}=\frac{\abs{\phzeta''(u)}}{\abs{\phzeta(u)}}\leq \frac{4}{u^2}\frac{\phReta(u)}{\abs{\phzeta(u)}}\leq \frac{4}{u^2}.
		\end{equation}
		\item\label{it:limit} For all $\lbrb{\zeta,z}\in\Cb^2_{\lbbrb{0,\infty}}\setminus\curly{\lbrb{0,0}}$, we have that   
		\begin{equation}\label{eq:limSup}
		\Re\lbrb{\kappa\lbrb{\zeta,z}}>0,
		\end{equation}
		and if in addition $\Re(\zeta)>0$ denoting $B_\zeta$ as a ball centred at $\zeta$ with $\overline{B_\zeta}\subset\CbOI$,   we have that $\inf_{\chi\in\overline{B_\zeta}}\Re\lbrb{\kappa\lbrb{\chi,z}}>0$.
		\item\label{it:kappa''} For all $\lbrb{\zeta,z}\in\Cb_{\lbbrb{0,\infty}}\times\cc$ and for all $n \geq 1$
		\begin{equation}\label{eq:kappa''}
			\abs{\kappa^{(n)}_z\lbrb{\zeta,z}}\leq \abs{\kappa^{(n)}_z(\Reta,\Rez)}. 
		\end{equation}
		\item\label{it:bivBounds_1} For all $\lbrb{\zeta,z}\in\Cb_{\lbbrb{0,\infty}}\times\cc$, we have that
		\begin{equation}\label{eq:kappa'_kappa_1}
		\begin{split}
		\abs{\frac{\kappa'_z\lbrb{\zeta,z}}{\kappa\lbrb{\zeta,z}}}\leq \frac{2}{\Rez},
		\end{split}
		\end{equation}
		\begin{equation}\label{eq:kappa''_kappa_1}
		\frac{|\kappa_z''(\zeta,z)|}{\abs{\kappa(\zeta,z)}}=\frac{\abs{\phzeta''(u)}}{\phzeta(u)}\leq \frac{4}{\Re^2(z)}.
		\end{equation}
		\item\label{it:limi} For $\zeta\in\Cb_{\lbbrb{0,\infty}}$ and any $A>0$ we have that
		\begin{equation}\label{eq:limi}
			\limi{x}\sup_{0\leq v\leq A}\abs{\frac{\kappa(\zeta, x+v)}{\kappa(\zeta,x)}}=1.
		\end{equation}
	\end{enumerate}
\end{proposition}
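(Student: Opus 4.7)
The plan is to reduce every item to univariate Bernstein-function facts via the representation \eqref{eq:repKappa1}, exploiting the pointwise domination $|\muzeta|(dx)\leq \mu_{\Reta}(dx)$ and $|\bmuzeta(x)|\leq \bar\mu_{\Reta}(x)$, which follow from $|e^{-\zeta x_1}|=e^{-\Reta\, x_1}$. Items \ref{it:repKappa1} and \ref{it:elementaryBiv} come out of this reduction immediately: \ref{it:repKappa1} is obtained by differentiating \eqref{eq:repKappa1} under the integral sign (justified because $xe^{-zx}$ is integrable against $|\muzeta|$) followed by the same integration by parts as in \eqref{eq:phi'}, while \ref{it:elementaryBiv} is exactly Proposition \ref{propAsymp1} applied to the univariate $\phi_q\in\Bc$, which is available for each real $q\geq 0$ by Lemma \ref{lem:repKappa1}.

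For \ref{it:ratioBounds} and \ref{it:limit}, I would take the real part of \eqref{eq:kappa} and use $|e^{-\zeta x_1-z x_2}|=e^{-\Reta x_1-\Rez x_2}$ to obtain
\[
\Re\lbrb{\kappa(\zeta,z)}\geq \kappa(0,0)+\dr_1\Reta+\dr_2\Rez+\IntOI\IntOI\lbrb{1-e^{-\Reta x_1-\Rez x_2}}\mu(dx_1,dx_2)=\kappa(\Reta,\Rez)\geq 0,
\]
so $|\kappa(\zeta,z)|\geq \Re(\kappa(\zeta,z))\geq \kappa(\Reta,\Rez)$, giving \eqref{eq:ratioBounds}. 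For the strict inequality \eqref{eq:limSup} I would split into cases: if $\Reta+\Rez>0$ the lower bound $\kappa(\Reta,\Rez)$ is already strictly positive unless $\kappa\equiv 0$; the remaining boundary case $\Reta=\Rez=0$, $(\zeta,z)\neq(0,0)$ is reduced to $\Re(\kappa(i\alpha,i\beta))=\kappa(0,0)+\int\int(1-\cos(\alpha x_1+\beta x_2))\mu(dx_1,dx_2)$ and handled by non-degeneracy of $\kappa$. The uniform version on $\overline{B_\zeta}$ then follows from continuity of $\kappa\in\Ac^2_{\lbbrb{0,\infty}}$ and compactness.

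Item \ref{it:kappa''} is direct differentiation: for $n\geq 1$,
\[
\kappa_z^{(n)}(\zeta,z)=\dr_2\ind{n=1}+(-1)^n\IntOI\IntOI x_2^n e^{-\zeta x_1-z x_2}\mu(dx_1,dx_2),
\]
and absolute-value passage inside the integral proves \eqref{eq:kappa''}. The quantitative bounds in \ref{it:bivBounds} are the technical heart. For \eqref{eq:kappa'_kappaC} I would integrate \eqref{eq:kappa'} by parts to get $z\phzeta'(z)=\phzeta(z)-\phzeta(0)-z^2\IntOI e^{-zx}x\bmuzeta(x)dx$, estimate the last integral in modulus by $\IntOI e^{-\Rez x}x\bar\mu_{\Reta}(x)dx=(\phReta(\Rez)-\phReta(0)-\Rez\,\phReta'(\Rez))/\Rez^2\leq \phReta(\Rez)/\Rez^2$ (the real identity from Proposition \ref{propAsymp1}), and divide by $|\phzeta(z)|$. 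Specialising $z=u\in[0,\infty)$ and observing that $|\phzeta(u)-\phzeta(0)|\leq \phReta(u)-\phReta(0)$ plus \ref{it:ratioBounds} yields the sharper $2/u$ bound in \eqref{eq:kappa'_kappa}. For \eqref{eq:kappa''_kappaC} and \eqref{eq:kappa''_kappa} I would combine $\phzeta''(z)=-\IntOI x^2 e^{-zx}\muzeta(dx)$ with the elementary inequality $y^2 e^{-y}\leq 4(1-e^{-y})$ for $y\geq 0$, which follows from $(4(1-e^{-y})-y^2e^{-y})'=e^{-y}((y-1)^2+3)>0$ and implies $u^2|\phReta''(u)|\leq 4\phReta(u)$. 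Item \ref{it:bivBounds_1} is then a direct corollary of \ref{it:kappa''} and \ref{it:ratioBounds}, namely $|\kappa_z^{(n)}(\zeta,z)/\kappa(\zeta,z)|\leq \kappa_z^{(n)}(\Reta,\Rez)/\kappa(\Reta,\Rez)$, followed by the real bounds \eqref{eq:kappa'_kappa} and \eqref{eq:kappa''_kappa}.

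Finally, for \ref{it:limi}, I would write $\kappa(\zeta,x+v)-\kappa(\zeta,x)=\dr_2 v+\IntOI e^{-xy}(1-e^{-vy})\muzeta(dy)$ and dominate its modulus by $\phReta(x+v)-\phReta(x)$; combining with $|\kappa(\zeta,x)|\geq \phReta(x)$ (item \ref{it:ratioBounds}) and Lemma \ref{philemma} applied to $\phReta\in\Bc$ yields $|\kappa(\zeta,x+v)/\kappa(\zeta,x)-1|\leq \phReta(x+v)/\phReta(x)-1\to 0$ uniformly in $v\in[0,A]$. The main obstacle I foresee is the strictness in \eqref{eq:limSup} on the degenerate slice $\Reta=\Rez=0$, which requires a separate case split using non-degeneracy of $\kappa\in\Bc^2$; everything else is a mechanical combination of the complex-to-real majorisation with the univariate estimates of Proposition \ref{propAsymp1}.
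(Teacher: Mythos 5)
Your proof is correct and shares the paper's overall strategy (majorise everything by the real section $\phReta$ and invoke the univariate facts of Proposition \ref{propAsymp1}), but two of your sub-arguments are genuinely different from the paper's. For items \ref{it:ratioBounds} and \ref{it:limit} the paper argues probabilistically: with $(\xi,\eta)$ the associated bivariate subordinator and $e_q$ an independent exponential time, $\labs{q/(q+\kappa(\zeta,z))}=\labs{\Ebb{e^{-\zeta\xi_{e_q}-z\eta_{e_q}}}}\leq q/(q+\kappa(\Reta,\Rez))$ and one lets $q\to0$, while \eqref{eq:limSup} comes from $e^{-\Re(\kappa(\zeta,z))}=\labs{\Ebb{e^{-\zeta\xi_1-z\eta_1}}}\leq \Ebb{e^{-\Reta\xi_1-\Rez\eta_1}}<1$. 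Your direct computation $\Re(\kappa(\zeta,z))\geq\kappa(\Reta,\Rez)$ from the L\'evy--Khintchine integral (via $\cos\leq1$) is an equally valid and more elementary route, and it in fact delivers $\labs{\kappa(\zeta,z)}\geq\Re(\kappa(\zeta,z))\geq\kappa(\Reta,\Rez)$, slightly more than \eqref{eq:ratioBounds} asks for. Note that both you and the paper leave the strictness of \eqref{eq:limSup} on the slice $\Reta=\Rez=0$ to an unspecified non-degeneracy argument (the paper's opening ``$1>\Ebb{e^{-\Reta\xi_1-\Rez\eta_1}}$'' is likewise not automatic there), so your explicit flagging of that case is fair rather than a defect relative to the paper. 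Second, for the second-derivative bound the paper differentiates \eqref{eq:phi'} and integrates by parts to obtain $u\IntOI e^{-uy}y^2\bar\Pi(y)dy\leq 2\IntOI e^{-uy}y\bar\Pi(y)dy\leq 2\phi(u)/u^2$, whereas your pointwise inequality $y^2e^{-y}\leq 4(1-e^{-y})$ applied under the L\'evy measure yields $\Re^2(z)\abs{\phzeta''(z)}\leq 4\phReta(\Rez)$ in one line; since $4/\Re^2(z)\leq 2/\Re^2(z)+2\abs{z}/\Re^3(z)$, this implies the stated \eqref{eq:kappa''_kappaC} and is marginally sharper. The remaining items (differentiation under the integral for \ref{it:repKappa1} and \ref{it:kappa''}, the chain \eqref{eq:kappa''} plus \eqref{eq:ratioBounds} for \ref{it:bivBounds_1}, and the increment estimate for \ref{it:limi}) coincide with the paper's proof up to presentation.
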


\begin{proof}[Proof of Proposition \ref{prop:kappa}]
Item \mbox{\ref{it:repKappa1}} follows immediately by a simple rearrangement using \eqref{eq:repKappa1}, and   item \mbox{\ref{it:elementaryBiv}} follows from the fact that $\phzeta\in\Bc$ for $\zeta\in\lbbrb{0,\infty}$, using item\,\ref{it:bernstein_cm} of Proposition \mbox{\ref{propAsymp1}}. Now, let us prove item \mbox{\ref{it:ratioBounds}}. 
 Let 
 $\lbrb{\lbrb{\xi_t,\eta_t}}_{t\geq 0}$
  denote the (possibly killed) bivariate subordinator associated to $\kappa$. Then for  an independent exponential random variable $e_q$ with parameter $q>0$, we have   
\begin{equation*}
	\abs{\frac{q}{q+\kappa\lbrb{\zeta,z}}}
	=
	\l|  \Ebb{e^{-\zeta \xi_{e_q}-z\eta_{e_q}}} \r|  
	\leq 
	\Ebb{e^{-\Reta \xi_{e_q}-\Re{(z)}\eta_{e_q}}}
	=
	\abs{\frac{q}{q+\kappa\lbrb{\Reta,\Re{(z)}}}},
\end{equation*}
so that \[\abs{\frac{1}{(q+ \kappa(\zeta,z)}} \leq \abs{\frac{1}{(q+ \kappa\lbrb{\Reta,\Re{(z)}} }},\] for all $q>0$, from which it follows, taking limits as $q\to 0$,  that  \eqref{eq:ratioBounds} holds, and similarly \eqref{eq:ratioBounds} is valied, for $\Reta=\Rez=0$, if $\kappa\lbrb{0,0}>0$.
Let us prove item \mbox{\ref{it:bivBounds}}. We estimate the last term in \eqref{eq:kappa'} to get that
\begin{equation}\label{eq:integEst}
\begin{split}
	\abs{z\IntOI e^{-zy}y\bmuzeta(y)dy}&\leq |z|\IntOI e^{-\Rez y}y\abs{\int_{y}^{\infty}\IntOI e^{-\zeta x_1}\mu(dx_1,dx_2)}dx_2\\
	&\leq |z|\IntOI e^{-\Rez y}y\bmuReta(y)dy\leq |z|\frac{\phReta(\Rez)}{\Re^2(z)},
\end{split}
\end{equation}
where the last inequality follows from the fact that $\phReta\in\Bc$, $\phReta'\geq 0$ on $[0,\infty)$ and the left-hand side of inequality \eqref{eq:phi'_phi}. Relation \eqref{eq:integEst} together with  \eqref{eq:kappa'} show the very first inequality in \eqref{eq:kappa'_kappaC} whereas the second follows from an application of \eqref{eq:ratioBounds} and the third  from the monotonicity of $\phReta$ on $[0,\infty)$.
We proceed to establish relation \eqref{eq:kappa''_kappaC}. For this purpose we note from \eqref{eq:kappa'} that
\begin{equation*}
\begin{split}
\abs{\phzeta''(z)}&\leq 2\abs{\IntOI e^{-zy}y\bmuzeta(y)dy}+\abs{z\IntOI e^{-zy}y^2\bmuzeta(y)dy}\\
&\leq 2\IntOI e^{-\Rez y}y\bmuReta(y)dy+|z|\IntOI e^{-\Rez y}y^2\bmuReta(y)dy\\
&\leq 2\frac{\phReta(\Rez)}{\Re^2(z)}+|z|\IntOI e^{-\Rez y}y^2\bmuReta(y)dy,
\end{split}
\end{equation*}
where for the last inequality we have invoked \eqref{eq:integEst}. It remains to bound the very last integral. However, if $\phi\in\Bc$ then $\phi'$ is completely monotone, see item\,\ref{it:bernstein_cm} of Proposition \ref{propAsymp1}, and hence $\phi''\leq 0$ on $[0,\infty)$. Differentiating the last expression of \eqref{eq:phi'} and utilizing once again \eqref{eq:integEst} we thus arrive at
\begin{equation*}
\begin{split}
u\IntOI e^{-uy}y^2\mubar{y}dy&\leq -\phi''(u)+u\IntOI e^{-uy}y^2\mubar{y}dy=2\IntOI e^{-uy}y\mubar{y}dy\leq 2\frac{\phi(u)}{u^2}
\end{split}
\end{equation*}
and hence
\[|z|\IntOI e^{-\Rez y}y^2\bmuReta(y)dy\leq \frac{2|z|}{\Re^3(z)}\phReta\lbrb{\Rez}.\]
Therefore, collecting the terms we obtain that
\begin{equation*}
\begin{split}
&\abs{\phzeta''(z)}\leq 2\frac{\phReta(\Rez)}{\Re^2(z)}+\frac{2|z|}{\Re^3(z)}\phReta\lbrb{\Rez}
\end{split}
\end{equation*} 
 and employing \eqref{eq:ratioBounds} we get that
 \begin{equation*}
 \begin{split}
 \abs{\frac{\phzeta''(z)}{\phzeta(z)}}&\leq 2\frac{\phReta(\Rez)}{\Re^2(z)\abs{\phzeta(z)}}+\frac{2|z|}{\Re^3(z)}\frac{\phReta(\Rez)}{\abs{\phzeta(z)}}\leq \frac{2}{\Re^2(z)}+\frac{2|z|}{\Re^3(z)}.
 \end{split}
 \end{equation*}
 This ends the proof of \eqref{eq:kappa''_kappaC}. 
 Relation  \eqref{eq:kappa''_kappa} follows by a simple substitution $z=u$ in \eqref{eq:kappa''_kappaC}, whereas \eqref{eq:kappa'_kappa} is derived in the following manner. The first inequality is deduced by not splitting the term $\frac{\phzeta(z)-\phzeta(0)}{z}$ in the last identity of \eqref{eq:kappa'} and substituting in the first inequality of \eqref{eq:kappa'_kappaC} with $z=\Rez=u$. To obtain the second inequality we observe from \eqref{eq:ratioBounds} that for any $\lbrb{\zeta,u}\in\Cb_{\lbbrb{0,\infty}}\times\intervalOI$
 \begin{equation}\label{eq:kappa'_kappaRe}
 \abs{\frac{\kappa'_z(\zeta,u)}{\kappa'_z(\zeta,u)}}\leq\frac{\abs{\phzeta(u)-\phzeta(0)}}{u\abs{\phzeta(u)}}+\frac{1}{u}.
 \end{equation}
 Next, we note from \eqref{eq:repKappa1} that
 \begin{equation*}
 \begin{split}
 \abs{\phzeta(u)-\phzeta(0)}&=\abs{\kappa(\zeta,u)-\kappa(\zeta,0)}\leq d_2u+\IntOI\lbrb{1-e^{-ux}}\muReta(dx)\leq \phReta(u).
 \end{split}
 \end{equation*}
 Therefore, \eqref{eq:kappa'_kappaRe} is further estimated from as \eqref{eq:ratioBounds}
 \begin{equation*}
 \abs{\frac{\kappa'_z(\zeta,u)}{\kappa'_z(\zeta,u)}}\leq\frac{\phReta(u)}{u\abs{\phzeta(u)}}+\frac{1}{u}\leq \frac{2}{u},
 \end{equation*}
 which proves \eqref{eq:kappa'_kappa}.
 We proceed with item\,\ref{it:limit}.  
 Let $\lbrb{\xi,\eta}:=\lbrb{\lbrb{\xi_t,\eta_t}}_{t\geq 0}$ be the possibly killed bivariate subordinator associated to $\kappa$. Then, for all $\lbrb{\zeta,z}\in\Cb^2_{\lbbrb{0,\infty}}\setminus\curly{\lbrb{0,0}}$,
  \[1>\Ebb{e^{-\Reta\xi_1-\Re(z)\eta_1}}\geq \abs{\Ebb{e^{-\zeta\xi_1-z\eta_1}}}=e^{-\Re\lbrb{\kappa(\zeta,z)}}\]
 and \eqref{eq:limSup} follows. The last claim follows from the inequality above, the fact that $\inf_{\chi\in\overline{B_\zeta}}\inf\Reta>0$ since the closed ball $\overline{B_\zeta}\subset\CbOI$ and $\Pbb{\xi_1>0}>0$ almost surely. 
Next we prove item \mbox{\ref{it:kappa''}}. Differentiating \eqref{eq:repKappa1} $n\geq1$ times with respect to $z$, then taking absolute values, it follows that
 \begin{equation*}
 \begin{split}
  \abs{\kappa^{(n)}_z(\zeta,z)}&=\abs{d_2\ind{n=1}+(-1)^{n-1}\IntOI y^{n} e^{-yz}\muzeta(dy)}\\
  &\leq d_2\ind{n=1}+\IntOI y^n e^{-\Rez y}\muReta(dy)\\
  &=\abs{\phReta^{(n)}(\Re(z))}=\abs{\kappa^{(n)}_z\lbrb{\Reta,\Rez}},
 \end{split}
 \end{equation*}
 which establishes \eqref{eq:kappa''} and item\,\ref{it:kappa''}. 
For item \mbox{\ref{it:bivBounds_1}}, we apply first \eqref{eq:kappa''}  with $n=1,2$, and then \eqref{eq:ratioBounds} to yield
 \b{equation}
 \label{n=1,2}
 \abs{\frac{\kappa^{(n)}\lbrb{\zeta,z}}{\kappa\lbrb{\zeta,z}}}\leq \abs{\frac{\kappa^{(n)}\lbrb{\Reta,\Rez}}{\kappa\lbrb{\Reta,\Rez}}}\abs{\frac{\kappa\lbrb{\Reta,\Rez}}{\kappa\lbrb{\zeta,z}}}\leq \abs{\frac{\kappa^{(n)}\lbrb{\Reta,\Rez}}{\kappa\lbrb{\Reta,\Rez}}}.
 \e{equation}
Applying \eqref{eq:kappa'_kappa} when $n=1$ in $\l(\ref{n=1,2}\r)$, we deduce that $\l(\ref{eq:kappa'_kappa_1}\r)$ holds.  Similarly, applying $\l(\ref{eq:kappa''_kappa}\r)$  when $n=2$ in $\l(\ref{n=1,2}\r)$, it follows  that $\l(\ref{eq:kappa''_kappa_1}\r)$ holds, as required for  item \mbox{\ref{it:bivBounds_1}}, and the proof is complete. It remains to consider item \ref{it:limi}. We observe that
\begin{equation*}
\begin{split}
&\limi{x}\sup_{0\leq v\leq A}\abs{\frac{\kappa(\zeta, x+v)}{\kappa(\zeta,x)}}=\limi{x}\sup_{0\leq v\leq A}\abs{1+\frac{\kappa(\zeta, x+v)-\kappa(\zeta,x)}{\kappa(\zeta,x)}}\\
&=\limi{x}\sup_{0\leq v\leq A}\abs{1+\frac{\int_{x}^{x+v}\kappa'_z(\zeta,w)dw}{\kappa(\zeta,x)}}.
\end{split}
\end{equation*}
However,
\begin{equation*}
\begin{split}
&\limi{x}\sup_{0\leq v\leq A}\abs{\frac{\int_{x}^{x+v}\kappa'_z(\zeta,w)dw}{\kappa(\zeta,x)}}\leq A\limi{x}\sup_{x\leq v\leq x+A}\frac{\abs{\kappa'_z(\zeta,w)}}{\abs{\kappa(\zeta,x)}}\\
&\leq A\limi{x}\sup_{x\leq v\leq x+A}\frac{\kappa'_z(\Reta,w)}{\abs{\kappa(\zeta,x)}}=A\limi{x}\frac{\kappa'_z(\Reta,x)}{\abs{\kappa(\zeta,x)}}\\
&\leq A\limi{x}\frac{\kappa'_z(\Reta,x)}{\abs{\kappa(\Reta,x)}}=0,
\end{split}
\end{equation*}
where in the second inequality we have used \eqref{eq:kappa''} with $n=1$,  in the first identity we have used that $\kappa'_z(\Reta,\cdot)$ is non-increasing, see item \ref{it:bernstein_cm} of Proposition \ref{propAsymp1}, for the last inequality we have invoked \eqref{eq:ratioBounds} and the evaluation of the last limit follows from \eqref{eq:kappa'_kappa}. 
 \end{proof}

\section{Proofs of Auxiliary Lemmas} \label{lemmasproofs}
 
  \begin{proof}[Proof of Lemma \mbox{\ref{philemma}}] 
Consider  the fact, see \cite[Prop 3.6]{ssv12}, that  each Bernstein function  $\phi\in\cal{B}$ preserves angular sectors, i.e.\ for each $w\in \bb{C} $,
\(
\l| \arg( \phi(w) ) \r| \leq     | \arg(w)| ,
\)
where we use the convention that $\arg(w)\in (-\pi,\pi]$.  Then, write $z=a+ib$ and without loss of generality assume that $a\in\lbbrbb{c,d}, b\in\lbbrbb{c_1,d_1}$, where $-\infty< c<d<\infty$ and $-\infty<c_1<d_1<\infty$. It follows that for all  $u$ large enough that $u> -a$,
\[
\l| \arg\l(  \f{ \phi(u+z)  }{\phi(u)   }  \r)\r|  = \l| \arg(\phi(u+z)) \r| \leq \l| \arg(u+z) \r| 
= \l|\arctan\l(   \f{b}{u+a}   \r)  \r|.
\]
Observe that $\lim_{u\to\infty} \l|\arctan\l( b/(u+a)   \r)  \r|=0$, uniformly for the specified compact range of $a,b$.
Therefore, for each  $\phi\in\cal{B}$, as $u\to\infty$,
\b{equation}
\label{ree}
   \f{\phi(u + z)}{   \phi(u)}  \sim      \f{|\phi(u + z)|}{   \phi(u)} \sim     \f{\Re(\phi(u + z))}{   \phi(u)}  .
\e{equation}
Writing $\phi$ as in   $\l(\ref{eq:Bern'}\r)$,  since $b$ is bounded, we have   $|u+z|\sim   u+a$, as $u\to\infty$, and   by $\l(\ref{phirealresult}\r)$, as $u\to\infty$,
\b{align}
\nonumber
    \f{|\phi(u + z)|}{   \phi(u)}   &\overset{(\ref{eq:Bern'})}=   \f{ |  \phi(0) +(u+z) \textup{d}+   (u+z)\int_0^\infty   e^{-(u+z)y}   \ov(y)dy    |    }{     \phi(0) +u \textup{d}+   u\int_0^\infty   e^{-uy}   \ov(y)dy      }     \vphantom{\Bigg)}  
\\
\nonumber
&\overset{\phantom{(\ref{eq:Bern'})}}\leq           \f{   \phi(0)+|u+z| \textup{d} +   |u+z|\int_0^\infty   |e^{-(u+z)y}  | \ov(y)dy        }{     \phi(0) + u \textup{d}+  u\int_0^\infty   e^{-uy}   \ov(y)dy      }  \vphantom{\Bigg)}
\\
\nonumber
&\overset{\phantom{(\ref{eq:Bern'})}}=         \f{   \phi(0)+|u+z| \textup{d}  +   |u+z|\int_0^\infty  e^{-(u+a)y}   \ov(y)dy        }{     \phi(0)  +u \textup{d}+   u\int_0^\infty   e^{-uy}   \ov(y)dy      }   . \vphantom{\Bigg)}
\\
\label{upperb}
&\overset{\phantom{(\ref{eq:Bern'})}}\sim         \f{   \phi(0)+(u+a) \textup{d}  +   (u+a)\int_0^\infty  e^{-(u+a)y}   \ov(y)dy        }{     \phi(0)  +u \textup{d}+   u\int_0^\infty   e^{-uy}   \ov(y)dy      }   =    \f{\phi(u+a)}{\phi(u)}   \overset{(\ref{phirealresult})}{\sim} 1  . \vphantom{\Bigg)}
%
%
%
%
%
%
%
%
\intertext{Now, observe that $-\cos(-by) \geq-1$ for all $b,y\in \bb{R}$. Then  by $\l(\ref{phirealresult}\r)$,  as $u\to\infty$,}
\nonumber
       \f{ \Re(\phi(u + z))}{   \phi(u)}    &=              \f{     \phi(0) +  \Re((u+z)\textup{d})     + \int_0^\infty \re (1- e^{-(u+z)y} )\Pi(dy)        }{   \phi(0) +  u \textup{d}+    \int_0^\infty (1- e^{-uy} )\Pi(dy)       }             \vphantom{\Bigg)}
\\
\nonumber
&=         \f{     \phi(0) + (u+a)\textup{d}+     \int_0^\infty   (1- e^{-(u+a)y} \cos(-by) )\Pi(dy)        }{   \phi(0) +  u \textup{d}+    \int_0^\infty (1- e^{-uy} )\Pi(dy)       }            \vphantom{\Bigg)}
\\
\label{lowerb}
&\geq         \f{   \phi(0) +  (u+a)\textup{d}+    \int_0^\infty   (1- e^{-(u+a)y}   )\Pi(dy)        }{ \phi(0) + u \textup{d}+     \int_0^\infty (1- e^{-uy} )\Pi(dy)       }   =  \f{\phi(u+a)}{\phi(u)} \overset{(\ref{phirealresult})}{\sim} 1 .   
\vphantom{\Bigg)}
\end{align}
Combining $\l(\ref{ree}\r)$,  $\l(\ref{upperb}\r)$, and $\l(\ref{lowerb}\r)$, the proof of Lemma \mbox{\ref{philemma}} is complete.
 \end{proof}

   \begin{proof}[Proof of Lemma \mbox{\ref{Wlemma}}]   
   Write $z=a+ib$ and without loss of generality assume that $a\in\lbbrbb{c,d}, b\in\lbbrbb{c_1,d_1}$, where $0\leq c<d<\infty$ and $-\infty<c_1<d_1<\infty$. For  $u>-a$ (so that $W_\phi(u+z+1)$ is well-defined), 
applying Lemma \mbox{\ref{philemma}} alongside $\l(\ref{eq:Stirling}\r)$ and $\l(\ref{eq:limEk}\r)$ from Theorem \mbox{\ref{thm:Stirling}},
it follows that as $u\to\infty$, 
\b{align}
\nonumber
\f{ W_\phi(u+1)  \phi^{z}(u+1)   }{  W_\phi(u+z+1)    }
%
%
%
%
%
&\overset{\l(\ref{eq:Stirling}\r)}= \phi^{ z }(u+1)      \f{   \phi ( u+ 1+ z    ) \phi^{\f{1}{2}} ( u+2+z )           }{    \phi(u+1)   \phi^{\f{1}{2}}(u+1)       } 
%
%
\f{ e^{  L_\phi(u+1) – E_\phi(u+1)}  }{ e^{  L_\phi(u+1+z   ) -  E_\phi(u+1 + z ) }  }
 \\  
\nonumber
  &\overset{\hspace{1.5pt}\ref{philemma}\hspace{1.5pt}}{\sim}       
       \phi^{ z  }(u+1)   \  e^{  L_\phi(u+1) – E_\phi(u+1)   - L_\phi(u+1 + z ) +  E_\phi(u+1 + z )   }
%
%
%
%
%
%
%
\\
  \label{l0}
    &\overset{\l(\ref{eq:limEk}\r)}\sim  
              \phi^{ z }(u+1)   \  e^{  L_\phi(u+1)  - L_\phi(u+1 +z )     }.
\end{align}
Now, recalling the definition of the $L_\phi$ error terms in $\l(\ref{eq:A}\r)$, part of the integrals cancel, yielding
\b{align}
\nonumber
   L_\phi(u+1) - L_\phi(u+1 + z )
   &= \int_{ 1 \mapsto u+2     } \log_0 (\phi(w))dw    -      \int_{ 1 \mapsto u+2+ z     } \log_0 (\phi(w))dw
\\
\label{l1}
&=   
-      \int_{ u+2 \mapsto u+2+ z     } \log_0 (\phi(w))dw
\end{align}
Writing $N=u+2$ for brevity, and  noting that $\log_0(\phi(w))= \ln(|\phi(w)|) + i \arg( \phi(w)  ) $, we have
\b{equation} 
\label{integrls}
\l(\ref{l1}\r)
= -      \int_{ N \mapsto N+ z     } \ln (|\phi(w)|)dw \  – \    i \int_{ N \mapsto N+ z    } \arg (\phi(w))dw.
\end{equation} 
We will integrate along the contours $\gamma_1$ and $\gamma_2$, which are straight lines connecting $N$ to $ N+ a $, and $ N+ a $ to $ N+ a + ib = N+z$, respectively. 
Note that along $\gamma_1$, $\arg(\phi(w))=\arg(w)=0$, so we need only consider $\gamma_2$ for the second integral in $\l(\ref{integrls}\r)$. Now, $|\gamma_2|=b$, so

\[
\l|  \int_{\gamma_2} \arg (\phi(w))dw \r|  
 \leq 
  b \sup_{x\in \gamma_2}|  \arg(\phi(w)) |
\leq 
   b \sup_{w \in \gamma_2}   |\arg(w)|.
\]
since $\phi$ preserves angular sectors \cite[Prop 3.6]{ssv12}. 
Now, $\sup_{w \in \gamma_2}   |\arg(w)| = \arctan(b/(N+a ))$, which converges to $0$ as $N=u+2\to\infty$ and $b$ belongs to a compact interval, and we conclude that 
\b{equation} \label{l2}
\lim_{n\to\infty}          \int_{ N \mapsto N+ a + ib     } \arg (\phi(w))dw        =0. 
\end{equation}
Now let us consider the   first integral in $\l(\ref{integrls}\r)$ over $\gamma_1$ and $\gamma_2$ separately. We will compare the $\gamma_1$  contribution with   $   a    \ln(\phi(u+1))  $, and the $\gamma_2$ part with $ ib\ln(\phi(u+1))  $. For the $\gamma_1$ part,
\[
  \int_{ \gamma_1    } \ln (|\phi(w)|)dw    - a \ln(\phi(u+1))   =   \int_{0}^{a  }  \ln(\phi(u+2+ x))dx   - a \ln(\phi(u+1))   
\]
\b{equation}
\label{int1gamma1}
=    \int_{0}^{a } (   \ln(\phi(u+2+ x))   -   \ln(\phi(u+1))   
  )  dx     
=   \int_{0}^{a  }    \ln \l( \f{ \phi(u+2+ x)}{\phi(u+1)}\r)   
    dx .
\e{equation}
By Lemma \mbox{\ref{philemma}}, $\underset{n\to\infty}{\lim} \phi(u+2+ x)/\phi(u+1)=1$, uniformly in $x \in [0,a]$, 
so  $\lim_{u\to\infty} \l(\ref{int1gamma1}\r)=0$,   
and   
\b{equation} \label{l3}
\lim_{u\to\infty}  \l(  \phi^{ a }(u+1) \ e^{ -\int_{ \gamma_1    } \ln (|\phi(x)|)dx   }    \r)=1  .
\end{equation}
For the   contour $\gamma_2$,  one can verify by applying the same argument   as for $\gamma_1$, that
\b{equation}
\label{l4}
\lim_{n\to\infty}   \l(   \phi^{ib}(u+1)  e^{ i    \int_{ \gamma_2    } \ln (|\phi(w)|)dw    }   \r) 
%
=
\lim_{u\to\infty}
\l(
 e^{ i  \int_0^b   \ln \l(   \f{  \phi( u+2+a  + iv   )    }{ \phi(u+1)    }  \r) dv    } 
 \r)
 = 1.
\e{equation}
%
%
Finally, substituting the individual limits  $\l(\ref{l2}\r), \l(\ref{l3}\r)$,   and $ \l(\ref{l4}\r)$ into  $\l(\ref{l0}\r)$, 
  we conclude that
\[
   \lim_{u\to\infty}          \f{ W_\phi(u+1)  \phi^{z} (u+1)  }{  W_\phi(u+z+1)    }    =1.
\]

  \end{proof}

	 \begin{proof}[Proof of Lemma \mbox{\ref{productformula}}] 
	  Firstly, by $\varphi_q(w)=q+\phi(w)\in\Bc$ and Lemma \mbox{\ref{Wlemma}}, we can rewrite  
\b{equation} 
\label{Wlim}
W_{\varphi_q}(z+1)   = 1 \times W_{\varphi_q}(z+1)   
=  \lim_{n\to\infty}   \f{   W_{\varphi_q}(n+1)     \varphi^z _q (n+1) W_{\varphi_q}(z+1)      }{  W_{\varphi_q}(n+z+1)     }.
\e{equation}
Recalling the  relations $W_{\varphi_q}(w+1)=\varphi_q(w)W_{\varphi_q}(w)$ and $W_{\varphi_q}(1)=1$ from $\l(\ref{eq:Bern-Gamma}\r)$, it follows that
\[
\l(\ref{Wlim}\r) 
 =     \lim_{n\to\infty}   \f{ \l(    \prod_{k=1}^n    \varphi_q(k)   \r)   W_{\varphi_q}(1)     \varphi^z_q (n+1)  W_{\varphi_q}(z+1)          }{ \l(  \prod_{k=1}^n    \varphi_q(z+k)  \r) W_{\varphi_q}(z+1)     } 
=   \lim_{n\to\infty} \l(  \varphi^z_q(n+1) \prod_{k=1}^n  \f{ \varphi_q(k)   }{  \varphi_q(z+k) } \r).
\]
Now, observing that we can rewrite $\varphi_q(n+1)$ in the form
\[
\varphi_q(n+1) = \varphi_q(1) \prod_{k=1}^n \l( \f{   \varphi_q(k+1)   }{ \varphi_q(k)  }   \r)  = \varphi_q(1) \prod_{k=1}^n \l(    1 +   \f{   \varphi_q(k+1)  -  \varphi_q(k)  }{ \varphi_q(k)  }   \r),
\]
we conclude, as required for Lemma \mbox{\ref{productformula}}, that
\[
W_{\varphi_q}(z+1)   = \varphi^z _q(1)\prod_{k=1}^\infty \l[   \f{\varphi_q(k)}{\varphi_q(z+k)}  \l(    1 +   \f{   \varphi_q(k+1)  -  \varphi_q(k)  }{ \varphi_q(k)  }   \r)^z     \r].
\]

	 \end{proof}

\begin{proof}[Proof of Lemma \mbox{\ref{phi'lemma}}] 
Fix     $y\geq1$ such that $\int_0^y x \Pi(dx) >0$. Applying simple inequalities, we get
\b{align*}
 \f{  \phi'(n)  }{    \phi'(n+c) }  = \f{\textup{d}+ \int_0^\infty    x e^{-nx} \Pi(dx)    }{   \textup{d}+     \int_0^\infty    x e^{-(n+c)x} \Pi(dx)     }
&\leq   \f{\textup{d}}{\textup{d}}+
    \f{ \int_0^y    x e^{-nx} \Pi(dx)    }{  \int_0^y    x e^{-(n+c)x} \Pi(dx)     } +  \f{ \int_y^\infty    x e^{-nx} \Pi(dx)    }{  \int_0^y    x e^{-(n+c)x} \Pi(dx)     }
%
	\\
	 &\leq    1+  e^{cy} +  \f{ \int_y^\infty    nx e^{-nx} \Pi(dx)    }{ n \int_0^y    x e^{-(n+c)x} \Pi(dx)     }.
\intertext{Observe that when $nx\geq y \geq 1$, the quantity $nx e^{-nx} $ is decreasing in $x$,
and thus}
 \f{  \phi'(n)  }{    \phi'(n+c) } 
 &\leq   1+ e^{cy} +  \f{       ny e^{-ny} \int_y^\infty \Pi(dx)    }{ n \int_0^y    x e^{-(n+c)x} \Pi(dx)     }
 \\
 &\leq    1+ e^{cy} +  \f{       y e^{-ny} \int_y^\infty \Pi(dx)    }{ e^{-(n+c)y} \int_0^y    x  \Pi(dx)     }
 \\
 &=   1+ e^{cy} +  \f{       y   \int_y^\infty \Pi(dx)    }{ e^{-cy} \int_0^y    x  \Pi(dx)     },
\end{align*}
which is a finite constant, independent of $n$, as required.

\end{proof}

	 \begin{proof}[Proof of Lemma \mbox{\ref{absconvlemma}}]  To show that the sum in  $\l(\ref{absconvlemmaeqn}\r)$ is absolutely convergent, we consider terms separately. First, consider the $1/W_{ \phi_{(k)}}(z)$ term. Applying $\l(\ref{eq:Stirling}\r)$ and $\l(\ref{eq:Ek}\r)$ from Theorem \mbox{\ref{thm:Stirling}}, 
\b{equation}
\label{feed1}
\l|     \f{ 1}{ W_{ \phi_{(k)}}(z+1)  }  \r|    \leq       \f{ |  \phi_{(k)}(z+1)  \phi^{\f{1}{2}}_{(k)}(z+2)  |   }{   \phi_{(k)}(1)   }     \l|e^{ - L_{ \phi_{(k)}}(z+1)         }\r|   \   e^{2}.
\e{equation}


\n First we bound $ |e^{ - L_{ \phi_{(k)}}(z+1)         }|$. Recalling   $\log_0(w)= \ln(|w|) + i \arg( w  ) $,    by $\l(\ref{eq:A}\r)$, with $z=a+ib$, 
\begin{align*}
 L_{ \phi_{(k)}}(z+1)          &=         \int_{ 1 \mapsto 2+z     } \log_0 ( \phi_{(k)}(w))dw 
\\
   &=       \int_{ 1 \mapsto 2+a     } \  \ln (|\phi_{(k)}(w)|)dw       +  \   \int_{ 2+ a \mapsto 2+a  + i b  }  
   \ \ln (|\phi_{(k)}(w)|)dw   
\\
   &+ i  \int_{ 1 \mapsto 2+a     } \arg( \phi_{(k)}(w))dw +   i  \int_{ 2+a \mapsto 2+a +ib    } \arg( \phi_{(k)}(w))dw.
\end{align*}
But $  \int_{ 2+ a \mapsto 2+a  + i b  } \ln (|\phi_{(k)}(w)|)dw   $ is pure imaginary, and   $ \int_{ 1 \mapsto 2+a     } \arg( \phi_{(k)}(w))dw=0$,  so 
\[
\l|e^{ - L_{ \phi_{(k)}}(z+1)         }\r| =   \l|      e^{- \int_{ 1 }^{ 2+a     } \ln (\phi_{(k)}(x))dx   -   i  \int_{ 2+a \mapsto 2+a +ib    } \arg( \phi_{(k)}(w))dw  } \r|.
\]

\n As $ \phi_{(k)}$ preserves angular sectors\mbox{\cite[Prop 3.6]{ssv12}},   $\big|\int_{ 2+a \mapsto 2+a +ib    } \arg( \phi_{(k)}(w))dw  \big| \leq b \pi/2$, and so 
\[
\l|e^{ - L_{ \phi_{(k)}}(z+1)         }\r| \leq e^{ \f{b\pi}{2}}   \l|      e^{- \int_{ 1 }^{ 2+a     } \ln (\phi_{(k)}(x))dx      } \r| =      e^{ \f{b\pi}{2}}        e^{- \int_{ 1 }^{ 2+a     } \ln (\phi_{(k)}(x))dx      }   .
\]

\n Now, $\phi_{(k)}$ is non-increasing, and $\phi'_{(k)}$ is non-decreasing, so applying Lemma \mbox{\ref{phi'lemma}}, we get
\b{align*}
 \f{ e^{- \int_{ 1 }^{ 2+a     } \ln(\phi_{(k)}(x)) dx  } }{ \phi_{(k)}(1)} 
 &\leq    
  \f{   e^{-(1+a) \ln (\phi_{(k)}(1))  }   }{ \phi_{(k)}(1)}
  =   \phi^{-(2+a)} _{(k)}(1)
  =  [\phi(k+1)-\phi(k)]^{-(2+a)}     
\\
&= \l[  \int_k^{k+1}    \phi'(x)dx    \r]^{-(2+a)}   \leq   \phi'(k+1)^{-(2+a)}  \overset{\ref{phi'lemma}}{\leq} C_{\phi}^{2+a} \  \phi'(k)^{-(2+a)} ,
\end{align*}
where $C_\phi$ depends only on $\phi$. Hence there is a constant $K>0$ so that for all $k\geq1$,  
\[
\l| \f{e^{ - L_{ \phi_{(k)}}(z+1)         }   }{ \phi_{(k)}(1)}  \r|
 \leq 
 K \phi'(k)^{-(2+a)} ,
\]
from which it follows that, with a slightly different constant $K_1$, for all $k\geq1$,
\[
\l(\ref{feed1}\r) \leq K' \   \l|  \phi_{(k)}(z+1) \ \phi^{\f{1}{2}}_{(k)}(z+2)  \r|    \   \phi'(k)^{-(2+a)}.
\]
Now, since \eqref{eq:kappa''} of Proposition \ref{prop:kappa}\,\ref{it:kappa''} implies that $\abs{\phi'(\zeta)}\leq \phi'(\Reta)),\zeta\in \CbOI$ and $\phi'$  is non-increasing on $\lbrb{0,\infty}$, observe that 
\[
 |\phi_{(k)}(z+1  )   |   = | \phi(z+1+k)-\phi(k)| = \l| \int_k^{z+1+k}  \phi'(w)dw   \r| \leq |z+1| \phi'(k),
\] and similarly $  |\phi^{\f{1}{2}}_{(k)}(z+2  )   |   \leq  |z+2|^{\f{1}{2}} \phi'(k)^{\f{1}{2}}    $. Then, substituting above, we have that 
\[
\l(\ref{feed1}\r) \leq K_1 \   | z+1| \ |z+2|^{\f{1}{2}}     \   \phi'(k)^{-\f{1}{2}-a},
\]
 and  plugging this into $\l(\ref{absconvlemmaeqn}\r)$, it follows that for another constant $K_2>0$,
 \b{equation} 
 \label{partialbound}
  \sum_{k=1}^\infty     \l|      \f{  \prod_{i=1}^{k}     [ \phi(z+i) - \phi(k)]    }{   \prod_{j=1}^{k-1}    [ \phi(j) - \phi(k)]  \hspace{7pt}   }  \f{     e^{-\phi(k)t}   }{ W_{ \phi_{(k)}}(z+1)  }  \r|  \leq K_2      \sum_{k=1}^\infty     \l|      \f{   \prod_{i=1}^{k}    [ \phi(z+i) - \phi(k)]    }{   \prod_{j=1}^{k-1}     [ \phi(j) - \phi(k)]  \hspace{7pt}   }  \r|   \f{     e^{-\phi(k)t}   }{  \phi'(k)^{\frac12+a}  } .
 \end{equation}
Now, applying the bounds in Lemma \mbox{\ref{productbound}},  we see that for another constants $K_3,K_4$,
\b{equation}
\label{penultimatebound}
\l(\ref{partialbound}\r) \leq K_3       \sum_{k=1}^\infty       e^{-\phi(k)t}     \abs{\phi(z+k)-\phi(k)} \phi'(k)^{-\f{1}{2}-a}\leq K_4\sum_{k=1}^\infty       e^{-\phi(k)t}\phi'(k)^{\f{1}{2}-a}.   
\e{equation}
Now, recall that in Definition \mbox{\ref{regularitycondition}}, we have imposed that $\beta(\phi')>-1$, where $\beta(\cdot)$ denotes the lower Matuszewska index, see \cite[p68]{bgt89} for the formal definition.   One can verify that if $\beta(\phi')>-1$, then the function $f(x):=x\phi'(x)$ has lower Matuszewska index $\beta(f)>0$. Then applying \cite[Prop 2.6.1(b)]{bgt89} to the function $f$,  it follows that  there exists a constant $c>0$ such that for all $k\geq1$,  
\b{equation}
\label{matuszbound}
k\phi'(k)= f(k)\geq   c \int_0^k \f{f(x)}{x} dx  = c \int_0^k \phi'(x)dx = c( \phi(k)-\phi(0)).
\e{equation}
Now,   consider the lower index of the function $f$,    defined by  $\underline{\textup{ind}}(f)   :=  \liminf_{x\to\infty} \ln(f(x))/\ln(x) $, 
or equivalently, 
\(
\underline{\textup{ind}}(f)   :=   \sup \{ \rho>0 :    \lim_{x\to\infty}  x^{-\rho} f(x) =\infty       \}  ,
\)   see e.g.\ \cite[p39]{b99}. From the result \cite[Prop 2.2.5]{bgt89},  $\underline{\textup{ind}}(f)  \geq \beta(f)$, and hence
\(
\underline{\textup{ind}}(f)  \geq \beta(f) >0.
\)
Moreover, by $\l(\ref{eq:phi'_phi}\r)$, one can   deduce that $ \underline{\textup{ind}}(\phi)  \geq  \underline{\textup{ind}}(f)   >0$, and in particular, there exist $c,\rho>0$ such that for all $k\geq1$, 
\b{equation}
\label{blumbound}
\phi(k)    \geq   c k^\rho   .
\e{equation}
Now, if $a>1/2$, then since $\phi$ is non-decreasing, by $\l(\ref{matuszbound}\r)$ and $\l(\ref{blumbound}\r)$,  for another constant $K_5$, 
\[
\l(\ref{penultimatebound}\r) 
 \leq  
  K_5 \sum_{k=1}^\infty       e^{-ct k^{\rho} }        \f{ c^{a-\f{1}{2}}    [\phi(k)-\phi(0)]^{\f{1}{2}-a}    }{  k^{\f{1}{2}-a}   }
   \leq  
     K_5        c^{a-\f{1}{2}}    [\phi(1)-\phi(0)]^{\f{1}{2}-a}         \sum_{k=1}^\infty       e^{-ct k^{\rho} }    k^{a- \f{1}{2}} ,
\]
and then one can easily verify that this sum is finite, as required. On the other hand, if $a\leq 1/2$, then since $\phi'$ is non-increasing, 
\[
\l(\ref{penultimatebound}\r) 
 \leq  
 K_5       \sum_{k=1}^\infty       e^{-ct k^{\rho} }            \phi'(k)^{\f{1}{2}-a}      \leq     K_5      \phi'(1)^{\f{1}{2}-a}        \sum_{k=1}^\infty       e^{-ct k^{\rho} }            ,
\]
and again one can easily verify that this sum is finite, and the proof of Lemma \mbox{\ref{absconvlemma}} is complete. 

\end{proof}

\begin{proof}[Proof of Lemma \mbox{\ref{productbound}}]
 Writing $z=a+ib$, $a,b\in\bb{R}$, we first consider the product for $1\leq j \leq k-C_z$, where $C_z>a$ is to be determined later.  We are going to change variables in the numerator so that the new variable ranges between $0$ and $k-j$. The change of variables is motivated by the observation that 
\[
\phi(k)-\phi(j)=\int_j^k \phi'(x)dx = \int_0^{k-j} \phi'(j+\tau)d\tau,
\]
so that after this change of variables, we can simply compare terms within each integral. 
\ \\ 

\n Consider first $u\in[0,1]$ chosen so that 
$w = (z+j) (1-u) + k u$. 
Then $(z+j) (1-u) + k u$ ranges between $z+j$ and $ k$, as desired.
Now, $\tau$ is simply defined as 
$\tau := (k-j) u$,
so that the ranges of integration match.
\ \\

\n Now, changing variables to  $\tau$ chosen such that $w=z+j + (1-z/(k-j))\tau $, we can rewrite $|\phi(k) - \phi(z+j)|$ as 
\[
\l|\int_{z+j\mapsto k}  \phi'(w) dw \r|  =\l|  \int_{0}^{k-j}  (1-\frac{z}{k-j}) \phi'\lbrb{z+j + (1-\frac{z}{k-j})\tau} d\tau  \r| 
\]
\[
\leq      \int_{0}^{k-j} | (1-\frac{z}{k-j})|\ | \phi'\lbrb{z+j + (1-\frac{z}{k-j})\tau}| d\tau
\]
\[
\leq     \int_{0}^{k-j} | (1-\frac{z}{k-j})|\  \phi'\lbrb{a+j + (1-\frac{a}{k-j})\tau} d\tau,
\]
where we have used for the last inequality relation \eqref{eq:kappa''} of Proposition \ref{prop:kappa}\,\ref{it:kappa''}.
We can now compare terms. Since $\tau\leq k-j,$ observe that $a-a\tau/(k-j)\geq0$, and hence $a+j + (1-a/(k-j))\tau \geq j+\tau$, from which it follows, since $\phi'$ is non-increasing on $\lbrb{0,\infty},$  that
\[
\phi'\lbrb{a+j + (1-\frac{a}{k-j})\tau} \leq \phi'(j+\tau).
\]
Now we want to show that $| (1-z/(k-j))|\leq 1$. We consider its square for convenience:
\[
| (1-\frac{z}{k-j})|^2=  \f{  (k-j-a)^2 }{(k-j)^2} +  \f{  b^2 }{(k-j)^2}= 1 -  \f{2a(k-j) }{(k-j)^2} +\f{ a^2+ b^2 }{(k-j)^2} .
\]
This does not exceed one if and only if 
\[
-  \f{2a(k-j) }{(k-j)^2} +\f{ a^2+ b^2 }{(k-j)^2} \leq0
\]
\[
\iff       a^2+ b^2    \leq   2a(k-j)    \iff  \f{ a^2+ b^2 }{2a} \leq       k-j
\]
\[
\iff j\leq    k - \f{ a^2+ b^2 }{2a} = k - \f{|z|^2}{2\Re(z)}.
\]
So let us choose $C_z\geq\f{|z|^2}{2\Re(z)} $. Then it follows that for all $1\leq j \leq  k-C_z$,
\[
 \f{ | \phi(z+j) - \phi(k)|  }{   \phi(k) - \phi(j) }    \leq 1,
\]
and hence the product simplifies substantially:
\begin{equation*}
	\begin{split}
	&\underset{1\leq j\leq k-1 }{\prod}     \f{ | \phi(z+j) - \phi(k)|  }{   \phi(k) - \phi(j) }  \leq \underset{k-C_z\leq j\leq k-1 }{\prod}     \f{ | \phi(z+j) - \phi(k)|  }{   \phi(k) - \phi(j) }\\
	& \leq     \underset{k-C_z\leq j\leq k-1 }{\prod}     \f{ | \phi(z+j) - \phi(k)|  }{   \phi(k) - \phi(k-1) }     \leq     \underset{k-C_z\leq j\leq k-1 }{\prod}     \f{ | \int_{z+j\mapsto k}\phi'(w)dw     |  }{   \phi'(k) } \\
	&\leq    \underset{k-C_z\leq j\leq k-1 }{\prod}     \f{ |  z+j- k| \ (\max_{k-C_z\leq j \leq k-1} \sup_{w \in [z+j,k]}|\phi'(w)| ) \vee \phi'(k)     }{   \phi'(k) }\\
	& \leq    \underset{k-C_z\leq j\leq k-1 }{\prod}     \f{ |  z+j- k| \ (\max_{k-C_z\leq j \leq k-1} \phi'(a+j) ) \vee \phi'(k)     }{   \phi'(k) }\\
	&\leq      \underset{k-C_z\leq j\leq k-1 }{\prod}     \f{ |  z+j- k| \ \phi'(k+a-C_z)  \vee \phi'(k)     }{   \phi'(k) }, 
	\end{split}
\end{equation*}
where $\vee$ stands for the maximum function, we have used $|\phi'(w)|\leq \phi'(\Re(w))$, see \eqref{eq:kappa''}, and the monotonicity of $\phi'$.
Now, $|z+j-k|\leq |z|+k-j\leq |z|+C_z$, because $j \geq k-C_z$, so this is
\[
\underset{1\leq j\leq k-1 }{\prod}     \f{ | \phi(z+j) - \phi(k)|  }{   \phi(k) - \phi(j) }\leq    (|z|+C_z)^{C_z}   \underset{k-C_z\leq j\leq k-1 }{\prod}     \f{ \phi'(k+a-C_z)  \vee \phi'(k)     }{   \phi'(k) } .
\]
Finally, observe that if $\phi'(k+a-C_z)  \leq \phi'(k) $ then the remaining product is $\leq 1 $, and on the other hand,  if  $\phi'(k+a-C_z)  \geq \phi'(k) $, then uniformly among all $k$ large enough, we have from Lemma \mbox{\ref{phi'lemma}}
\[\f{ \phi'(k+a-C_z)  \vee \phi'(k)     }{   \phi'(k) }=\f{ \phi'(k+a-C_z)      }{   \phi'(k) }   \leq C,\] where $C>0$ is independent of $k$, and the proof of Lemma \mbox{\ref{productbound}} is complete.   

\end{proof}

\bibliography{bg.bib}

\begin{thebibliography}{10}

\bibitem{abcpmv19}
{Alili, L.}, {Bartholom\'{e}, C.}, {Chaumont, L.}, {Patie, P.}, {Savov, M.},
  and {Vakeroudis, S.}
\newblock On {D}oney's striking factorization of the arc-sine law.
\newblock {\em submitted}, 2019.

\bibitem{ajr13}
{Alili, L.}, {Jedidi, W.}, and {Rivero, V.}
\newblock On exponential functionals, harmonic potential measures and
  undershoots of subordinators.
\newblock {\em ALEA Lat. Am. J. Probab. Math. Stat.}, 11(1):711--735, 2014.

\bibitem{blm16}
{Behme, A.}, {Lindner, A.}, and {Maejima, M.}
\newblock On the range of exponential functionals of {L\'e}vy processes.
\newblock In {\em S{\'e}minaire de Probabilit{\'e}s XLVIII}, pages 267--303.
  Springer, 2016.

\bibitem{B96}
Jean Bertoin.
\newblock {\em L\'evy processes}, volume 121 of {\em Cambridge Tracts in
  Mathematics}.
\newblock Cambridge University Press, Cambridge, 1996.

\bibitem{b99}
{Bertoin, J.}
\newblock Subordinators: examples and applications.
\newblock {\em Lectures on probability theory and statistics ({Saint-Flour},
  1997)}, 1717:1--91, 1999.

\bibitem{bpy04}
{Bertoin, J.}, {Biane, P.}, and {Yor, M.}
\newblock Poissonian exponential functionals, q-series, q-integrals, and the
  moment problem for log-normal distributions.
\newblock In {\em Seminar on Stochastic Analysis, Random Fields and
  Applications IV}, pages 45--56. Springer, 2004.

\bibitem{by02b}
{Bertoin, J.} and {Yor, M.}
\newblock The entrance laws of self-similar {M}arkov processes and exponential
  functionals of {L\'e}vy processes.
\newblock {\em Potential Analysis}, 17(4):389--400, 2002.

\bibitem{by02}
{Bertoin, J.} and {Yor, M.}
\newblock On the entire moments of self-similar {M}arkov processes and
  exponential functionals of l{\'e}vy processes.
\newblock In {\em Annales de la Facult{\'e} des sciences de Toulouse:
  Math{\'e}matiques}, volume~11, pages 33--45, 2002.

\bibitem{by05}
{Bertoin, J.} and {Yor, M.}
\newblock Exponential functionals of {L\'e}vy processes.
\newblock {\em {Probab. Surv.}}, 2:191--212, 2005.

\bibitem{bgt89}
{Bingham, N.}, {Goldie, C.}, and {Teugels, J.}
\newblock {\em Regular variation}, volume~27.
\newblock Cambridge University press, 1989.

\bibitem{cpy97}
{Carmona, P.}, {Petit, F.}, and {Yor, M.}
\newblock On the distribution and asymptotic results for exponential
  functionals of {L\'e}vy processes.
\newblock {\em Exponential functionals and principal values related to
  {B}rownian motion}, pages 73--121, 1997.

\bibitem{cpy01}
{Carmona, P.}, {Petit, F.}, and {Yor, M.}
\newblock Exponential functionals of {L\'e}vy processes.
\newblock In {Barndorff-Nielsen, O.}, {Mikosch, T.}, and {Resnick, S.},
  editors, {\em L{\'e}vy processes}, pages 41--55. Springer, 2001.

\bibitem{d07}
{Doney, R.}
\newblock {\em Fluctuation Theory for {L\'evy} Processes: Ecole D'Et{\'e} de
  Probabilit{\'e}s de {Saint-Flour} {XXXV}-2005}.
\newblock Springer, 2007.

\bibitem{hk14}
{Hackmann, D.} and {Kuznetsov, A.}
\newblock Asian options and meromorphic {L\'e}vy processes.
\newblock {\em Finance and Stochastics}, 18(4):825--844, 2014.

\bibitem{hy13}
{Hirsch, F.} and {Yor, M.}
\newblock On the {M}ellin transforms of the perpetuity and the remainder
  variables associated to a subordinator.
\newblock {\em Bernoulli}, 19(4):1350--1377, 2013.

\bibitem{hj90}
{Horn, R.} and {Johnson, C.}
\newblock {\em Matrix analysis}.
\newblock Cambridge University press, 1990.

\bibitem{jv18}
{Jedidi, W.} and {Vakeroudis, S.}
\newblock Windings of planar processes, exponential functionals and asian
  options.
\newblock {\em Adv. in Appl. Probab.}, 50(3):726--742, 2018.

\bibitem{k01}
{Krantz, S.}
\newblock {\em Function theory of several complex variables}, volume 340.
\newblock American Mathematical Soc., 2001.

\bibitem{kp13}
{Kuznetsov, A.} and {Pardo, J.}
\newblock Fluctuations of stable processes and exponential functionals of
  hypergeometric {L\'e}vy processes.
\newblock {\em Acta applicandae mathematicae}, 123(1):113--139, 2013.

\bibitem{lx18}
{Li, Z.} and {Xu, W.}
\newblock Asymptotic results for exponential functionals of {L\'e}vy processes.
\newblock {\em {Stoch. Process. Their Appl.}}, 128(1):108--131, 2018.

\bibitem{lps19}
{Loeffen, R.}, {Patie, P.}, and {Savov, M.}
\newblock Extinction time of {N}on-{M}arkovian self-similar processes,
  {P}ersistence, {A}nnihilation of jumps and the {F}réchet distribution.
\newblock {\em J. Stat. Phys.}, 175(5):1022--1041, 2019.

\bibitem{hy06}
{Matsumoto, H.} and {Yor, M.}
\newblock Exponential functionals of {B}rownian motion. {I}. {P}robability laws
  at fixed time.
\newblock {\em Probab. Surv.}, 2:312--347, 2005.

\bibitem{mz06}
Krishanu Maulik and Bert Zwart.
\newblock Tail asymptotics for exponential functionals of l{\'e}vy processes.
\newblock {\em {Stoch. Process. Their Appl.}}, 116(2):156--177, 2006.

\bibitem{o97}
{Olver, F.}
\newblock {\em Asymptotics and special functions}.
\newblock AK Peters/CRC Press, 1997.

\bibitem{pps16}
{Palau, S.}, {Pardo, J.}, and {Smadi, C.}
\newblock Asymptotic behaviour of exponential functionals of {L\'evy} processes
  with applications to random processes in random environment.
\newblock {\em ALEA}, 13:1235--1258, 2016.

\bibitem{pps12}
{Pardo, J.}, {Patie, P.}, and {Savov, M.}
\newblock A {W}iener-{H}opf type factorization for the exponential functional
  of {L\'e}vy processes.
\newblock {\em J. Lond. Math. Soc.}, 86(3):930--956, 2012.

\bibitem{prs13}
{Pardo, J.}, {Rivero, V.}, and {van Schaik, K.}
\newblock On the density of exponential functionals of {L\'e}vy processes.
\newblock {\em Bernoulli}, 19(5A):1938--1964, 2013.

\bibitem{p09}
{Patie, P.}
\newblock Exponential functional of a new family of {L\'e}vy processes and
  self-similar continuous state branching processes with immigration.
\newblock {\em Bulletin des sciences mathematiques}, 133(4):355--382, 2009.

\bibitem{p13}
{Patie, P.}
\newblock Asian options under one-sided l{\'e}vy models.
\newblock {\em {J. Appl. Probab.}}, 50(2):359--373, 2013.

\bibitem{ps12}
{Patie, P.} and {Savov, M.}
\newblock Extended factorizations of exponential functionals of l{\'e}vy
  processes.
\newblock {\em Electron. J. Probab}, 17(38):1--22, 2012.

\bibitem{ps13}
{Patie, P.} and {Savov, M.}
\newblock Exponential functional of {L\'e}vy processes: Generalized
  {W}eierstrass products and {W}iener-{H}opf factorization.
\newblock {\em Comptes Rendus Mathematique}, 351(9-10):393--396, 2013.

\bibitem{ps16}
{Patie, P.} and {Savov, M.}
\newblock Bernstein-gamma functions and exponential functionals of {L\'e}vy
  processes.
\newblock {\em {Electron. J. Probab.}}, 23:1--101, 2018.

\bibitem{ps19}
{Patie, P.} and {Savov, M.}
\newblock Spectral expansions of non-self-adjoint generalized {L}aguerre
  semigroups.
\newblock {\em Mem. Amer. Math. Soc., {\textup{to appear}}}, 179 pp., 2019.

\bibitem{psy19}
{Patie, P.}, {Savov, M.}, and {Zhao, Y.}
\newblock Intertwining, excursion theory and {K}rein theory of strings for
  non-self-adjoint {M}arkov semigroups.
\newblock {\em Ann. Probab., {\textup{to appear}}}, 50 pp., 2019+.

\bibitem{sv18}
{Salminen, P.} and {Vostrikova, L.}
\newblock On exponential functionals of processes with independent increments.
\newblock {\em {Theory Probab. Appl.}}, 63(2):267--291, 2018.

\bibitem{ssv12}
{Schilling, R.}, {Song, R.}, and {Vondracek, Z.}
\newblock {\em Bernstein functions: theory and applications}, volume~37.
\newblock {Walter de Gruyter}, 2012.

\bibitem{u95}
{Urbanik, K.}
\newblock Infinite divisibility of some functionals on stochastic processes.
\newblock {\em {Probab. Math. Statist.}}, 15:493--513, 1995.

\bibitem{y12}
{Yor, M.}
\newblock {\em Exponential functionals of {B}rownian motion and related
  processes}.
\newblock Springer Science \& Business Media, 2012.

\end{thebibliography}
\bibliographystyle{plain}

\end{document}